\newtheorem{theorem}{Theorem}[section]
\newtheorem{lemma}[theorem]{Lemma}
\newtheorem{remark}[theorem]{Remark}
\newtheorem{corollary}[theorem]{Corollary}
\newcommand{\N}{\mathbb{N}}
\newcommand{\RR}{\mathbb{R}}
\newcommand{\dist}{{\mathrm{d}}}
\def\Aplus{\tilde A}
\newcommand{\tT}{\mathrm{T}}
\DeclareMathOperator*{\argmin}{arg\,min}
\DeclareMathOperator*{\argmax}{arg\,max}
\DeclareMathOperator{\polar}{Polar}
\DeclareMathOperator{\dom}{dom}
\DeclareMathOperator{\tr}{tr}
\def\smallperp{\scriptscriptstyle\kern-1pt\bot\kern-1pt}
\begin{document}
\title{On the Rotational Invariant $L_1$-Norm PCA}

\author{Sebastian Neumayer\footnotemark[1]
\and
Max Nimmer\footnotemark[1] \footnotemark[4] 
\and 
Simon Setzer\footnotemark[3] 
\and 
Gabriele Steidl\footnotemark[1] \footnotemark[2]}

\maketitle
\footnotetext[1]{Department of Mathematics,
	Technische Universit\"at Kaiserslautern,
	Paul-Ehrlich-Str.~31, D-67663 Kaiserslautern, Germany,
	\{neumayer,nimmer,steidl\}@mathematik.uni-kl.de.
	} 
\footnotetext[2]{Fraunhofer ITWM, Fraunhofer-Platz 1,
	D-67663 Kaiserslautern, Germany}
\footnotetext[3]{Engineers Gate, London, United Kingdom}
\footnotetext[4]{corresponding author}

\begin{abstract}
    	Principal component analysis (PCA) is a powerful tool for dimensionality reduction.
		Unfortunately, it is sensitive to outliers, so that various robust PCA variants 
		were proposed in the literature.
		One of the most frequently applied methods for high dimensional data reduction is the rotational invariant $L_1$-norm PCA 
		of Ding and coworkers.
		So far no convergence proof for this algorithm was available.
		The main topic of this paper is to fill this gap.
		We reinterpret this robust approach as a conditional gradient algorithm
		and show moreover that it coincides with a gradient descent algorithm on
		Grassmannian manifolds.
		Based on the latter point of view, we prove  global
		convergence of the whole series of iterates 
		to a critical point using the Kurdyka-{\L}ojasiewicz property of the objective function,
		where we have to pay special attention to so-called anchor points, where the function is not differentiable.
				\vspace{5pt}
		
		{\textbf{Keywords:}\ Principal component analysis, Dimensionality reduction, Robust subspace fitting, Conditional gradient algorithm,
		Frank-Wolfe algorithm, Optimization on Grassmannian manifolds
		}\vspace{5pt}
	
		{\textbf{MSC:}\ 58C05, 62H25, 65K10}
\end{abstract}
\section{Introduction} \label{sec:intro}
In exploratory data analysis, principal component analysis (PCA) \cite{Pearson1901} still is one of the most popular tools for dimensionality reduction. 
Given $N \ge d $ data points $x_1,\ldots,x_N \in \mathbb R^d$, it finds a $K$-dimensional affine subspace
$\{\hat A \, t + \hat b: t \in \mathbb R^K\}$, $1 \le K \le d$, of $\mathbb R^d$
having smallest squared Euclidean distance from the data:
\begin{equation} \label{PCA_1}
	(\hat A, \hat b) \in \argmin_{A \in \mathbb R^{d,K}, b \in \mathbb R^d} \sum_{i=1}^N \min_{t \in \mathbb R^K} \|A\, t + b - x_i \|^2,
\end{equation}
where $\| \cdot\|$ denotes the Euclidean norm.
While $\hat b$ in the above minimization problem is not unique,
every minimizing affine subspace goes through 
the offset(bias) 
$\bar b \coloneqq \frac{1}{N}(x_1 + \ldots + x_N)$.
Therefore, we restrict our attention to data points 
$y_i \coloneqq x_i - \bar b$, $i=1,\ldots,N$, and subspaces through the origin minimizing the squared Euclidean distances to the $y_i$, $i=1,\ldots,N$.
Setting further the gradient with respect to $t \in \mathbb R^K$ to zero and allowing only orthonormal columns in $A$,
the PCA problem becomes
\begin{equation} \label{PCA_2}
	\hat A \in \argmin_{{A \in \mathbb R^{d,K}} \atop {A^\tT A = I_K}} \sum_{i=1}^N \|P^{\smallperp}_A y_i \|^2,
\end{equation}
where 
\[
	P^{\smallperp}_A \coloneqq I_d - A A^\tT
\]
denotes the orthogonal projection onto 
$\mathcal{R}(A)^\perp = \mathcal{N}(A^\tT)$ and $I_d$ the $d\times d$ identity matrix. 
Here $\mathcal{R}(A)$ denotes the range and $\mathcal{N}(A)$
the kernel of $A$.
One convenient property of PCA is the \emph{nestedness of the PCA subspaces}, i.e.,
for $K < \tilde K \le d$,
the optimal $K$-dimensional PCA subspace is contained in the $\tilde K$-dimensional one.
Furthermore, it is very fast, as the columns of an optimal $\hat A$ are the eigenvectors corresponding to the  $K$ 
largest eigenvalues of the empirical covariance matrix $\frac{1}{N-1}\sum_{i=1}^N y_iy_i^\tT$ which can be computed in polynomial time.

\begin{figure}[t]
	\begin{center}
		\includegraphics[width=0.45\textwidth]{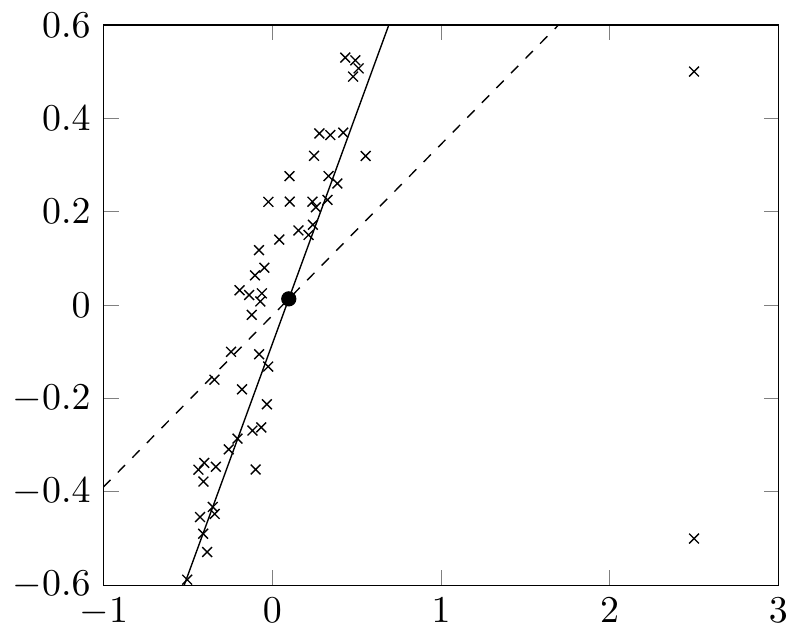}
	\end{center}
	\caption{Demonstration of the sensitivity of standard PCA to outliers. The data set consists of $50$ points close to a line through the origin and two outliers. 
	The subspace indicated by the dashed line  is the result of standard PCA \eqref{PCA_2}, while the solid one corresponds to \eqref{PCA_L1}. 
	In both cases the offset $b$ was chosen as the mean (black dot).}\label{fig:whyrobust}
\end{figure}

Unfortunately, PCA is sensitive to outliers appearing quite often in real-world data sets, see Fig. \ref{fig:whyrobust}.
A lot of different methods in robust statistics \cite{HR2009,RL1987,MMY2006} and optimization
were proposed to make the dimensionality reduction more robust.
One possibility consists of removing outliers before computing the principal components which has the serious drawback that outliers are difficult 
to identify and other data points are often falsely labeled as outliers.
Another approach  assigns different weights to data points based on their estimated relevancy,
to get a weighted PCA, see, e.g.~\cite{KKSZ08}. 
The RANSAC algorithm \cite{fischler1987random} repeatedly estimates the model parameters from a random subset of the data points until 
a satisfactory result is obtained as indicated by the number of data points within a certain error threshold. 
In a similar vein, least trimmed squares PCA models \cite{podosinnikova2014robust,rousseeuw2005robust} 
aim to exclude outliers from the squared error function, but in a deterministic way. 
The variational model in \cite{candes2011robust}
decomposes the data matrix $Y =(y_1 \ldots y_N)$ 
into a low rank and a sparse part.
Related approaches such as \cite{MT2011,XCS2012} separate the low rank component from the column sparse one using different norms in the variational model. However, such a decomposition is not always realistic.
From a statistical point of view, robust subspace recovery can be done via Tyler's M-estimator \cite{Tyler1987,Zhang2015}, 
a special case of M-estimators of covariance. But due to the large number of variables to be estimated for the scatter matrix, this approach is not feasible for high dimensional data.
Another group of robust PCA approaches replaces the squared $L_2$ norm in the PCA by the $L_1$ norm.
Then, the minimization of the energy functional can be addressed by linear programming, see, e.g.,  
Ke and Kanade \cite{ke2005robust}. Unfortunately, this norm is not rotationally invariant meaning that in general for an orthogonal matrix $Q$ we have $\|Qx\|_1 \neq \|x\|_1$. 
Consequently, when rotating the centered data points $y_i$, the minimizing subspace is not rotated in the same way.

\begin{figure}[t]
\begin{center}
	\includegraphics[width=0.5\textwidth]{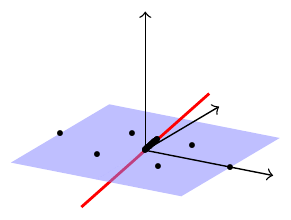}
\end{center}
	\caption{
		Global minimizing line/plane corresponding to $K=1,2$ in \eqref{PCA_L1}
		demonstrating the loss of the nestedness property of the PCA. 
		The data $y_i \in \mathbb R^3$ (black dots) 
		are given by the points $\{(0.005l,0,0.005l): l\in\{0,\ldots, 30\}\}$
		on a line along with  points $(1,0,0)^\tT$, $(-1,0,0)^\tT$, $(1/\sqrt{2}, 1/\sqrt{2},0)^\tT$, 
		$(-1/\sqrt{2},-1/\sqrt{2},0)^\tT$, $(1/\sqrt{2},-1/\sqrt{2},0)^\tT$, $(-1/\sqrt{2},1/\sqrt{2},0)^\tT$ on an ellipse. 
		The direction $\hat a_1 = (1/\sqrt{2},0,1/\sqrt{2})^\tT$ 
		(red line) does not lie on the blue plane generated by 
		$\hat A$ which has the columns $(1, 0,0 )^\tT$ and $(0,1,0)^\tT$.
		\label{fig:robPCA_model_diff}
	}
\end{figure}

In this paper, the focus lies on the model
\begin{equation} \label{PCA_L1_offset}
	(\hat A, \hat b) \in \argmin_{A \in \mathbb R^{d,K}, b \in \mathbb R^d} \sum_{i=1}^N \min_{t \in \mathbb R^K} \|A\, t + b - x_i \|
\end{equation}
where in contrast to \eqref{PCA_1} we do not square the Euclidean norm.
First of all, let us mention that the determination of the offset $b \in \mathbb R^d$ is not straightforward now.
Frequently, the geometric median of the data is used as offset, which is in general not a minimizer of \eqref{PCA_L1_offset}, see \cite[Sect.~5]{NNSS19b}.
However, in this paper it is assumed that $b \in \mathbb R^d$ is fixed and already subtracted from the data. 
Then, \eqref{PCA_L1_offset} reduces to
\begin{equation} \label{PCA_L1}
	\hat A \in \argmin_{{A \in \mathbb R^{d,K}} \atop {A^\tT A = I_K}} \sum_{i=1}^N  \|P^{\smallperp}_A y_i \|.
\end{equation}
A slightly different form of this model became popular under the name \emph{rotational invariant $L_1$-norm PCA} by a paper of Ding et al.~\cite{ding2006r}.

It is important to note that in contrast to the classical PCA the hierarchical structure of the approach is lost. 
This is exemplified in Fig.~\ref{fig:robPCA_model_diff}.
We mention that several models applying the deflation technique of standard PCA in a robust setting
were also provided in the literature. These models are not of interest in this paper,
but we refer to the book \cite[p.~203]{Huber1981} 
and the collection of papers \cite{HFB2014,KK2016,kwak2008principal,LCTZ15,LS1985,MT2011,nie2011robust}
which is clearly not complete.

Despite the loss of the nestedness property and the non-smooth target function which is harder to optimize, 
the robustness makes \eqref{PCA_L1} 
an attractive choice in practical problems. 
Examples of improved performance compared 
to standard PCA and other robust alternatives can be found in, e.g., \cite{ding2006r,LM2018}. Furthermore, in \cite{MZL19} it was shown that 
under certain assumptions on the given so-called inlier and outlier data, by minimizing \eqref{PCA_L1} 
we can recover the exact underlying subspace spanned by the inlier data points. 
Additionally, in contrast to convex relaxation methods, the approach is well suited for high dimensional data appearing, e.g., in image processing.

The minimization of \eqref{PCA_L1} has been treated before. Ding et al.~\cite{ding2006r} suggest a constrained minimization based on a power method without convergence proof. 
In \cite{MZL19} the optimization problem is tackled by a geodesic gradient descent approach on the Grassmannian and under certain assumptions on the data, 
local convergence to the global minimizer is shown for an appropriate starting step size and initial iterate.
A tight convex relaxation was studied in \cite{LCTZ15}, 
where the projection matrix $AA^\tT$ is estimated. Due to the much larger size of the projection matrix, this approach is not suitable for high-dimensional data. 
An approach based on iteratively reweighted least squares can be found in \cite{LM2017}, where standard PCA is repeatedly applied to rescaled data points. For the special case of one-dimensional subspaces ($K=1$), a Weiszfeld-like algorithm with convergence was given in \cite{NNSS19b}.

In this paper, the iterative algorithm of Ding et al.~\cite{ding2006r} is first interpreted as a conditional gradient algorithm, also known as Frank-Wolfe algorithm, 
which only implies a certain convergence behavior of subsequences of the iterates.
Recalling that we are not interested in the columns of the minimizer $\hat A$ in \eqref{PCA_L1} itself, but just
in the $K$-dimensional subspace spanned by them,
we show that the algorithm  can be recast as a gradient descent algorithm on the Grassmannian manifold. This enables us to prove global convergence of the whole sequence of iterates under mild assumptions. 

The paper is organized as follows: In Section \ref{sec:prelim},
we recall preliminaries on Stiefel and Grassmannian manifolds.
We discuss important properties of the robust PCA functional in Section \ref{sec:model}.
Section \ref{sec:mini_1} shows the equivalence of
the algorithm of Ding et al.~\cite{ding2006r},
the conditional gradient algorithm and a gradient descent algorithm on Grassmannians.
The proof of global convergence of the whole sequence of iterates under some restrictions on the so-called anchor points is given in Section \ref{sec:convergence}.
Finally, Section \ref{sec:ana_anchor} addresses the topic of anchor points.

\section{Preliminaries on Stiefel and Grassmannian Manifolds} \label{sec:prelim}
In this section, we briefly provide the basic notation on Stiefel and Grassmannian manifolds which is
required in our approach. Good references on the topic, 
in particular for optimization on these manifolds, are
\cite{AMS08,EAS98}.

Let $K \le d$. The (compact) \emph{Stiefel manifold} is defined by
\[
	S_{d,K} \coloneqq \left\{A \in \mathbb R^{d,K}: A^\tT A = I_K\right\}.
\]
For $K=1$, it coincides with the unit sphere $S_{d,1} = S^{d-1}$ in $\mathbb R^d$
and for $K=d$ with the orthogonal matrices $S_{d,d} = O(d)$.
The tangent space at $A \in S_{d,K}$ is given by
\begin{align}
	T_A S_{d,K}
	&\coloneqq \left\{H \in \mathbb R^{d,K}: H^\tT A + A^\tT V = 0\right\}\\
	&= \left\{H \in \mathbb R^{d,K}: H = A X + A_\perp Z, \quad X \in \mathbb R^{K,K} \; \mbox{skew symmetric} ,\, Z \in \mathbb R^{d-K,K}\right\},
\end{align}
where $A_\perp$ denotes a matrix with orthonormal columns which are in addition orthogonal to the columns of $A$.
There are two common ways to define inner products on the tangent space such that $S_{d,K}$
becomes a Riemannian manifold, namely
\begin{itemize}
 \item[i)]  the Frobenius inner product
$\langle H_1,H_2\rangle_F \coloneqq \tr(H_1^\tT H_2)$,
or
\item[ii)]
the canonical inner product
$
\langle H_1,H_2\rangle_A \coloneqq \tr\bigl(H_1^\tT (I_d - \frac12 A A^\tT) H_2\bigr) 
$.
\end{itemize}
In the rest of this paper $\|A\| = \tr(A^\tT A)^{\frac12}$ always denotes the Frobenius norm of $A$.
The first inner product appears when considering $S_{d,K}$ as submanifold of the Euclidean space $\mathbb R^{d,K}$,
while the second one relies the quotient structure $S_{d,K}= O(d)/O(d-K)$.
We are mainly interested in the $K$-dimensional subspace spanned by the columns of $A \in S_{d,K}$,
which does not change if we multiply $A$ from the right with an orthogonal matrix $Q \in O(K)$.
This is pictured by the \emph{Grassmannian manifold}, or just Grassmannian, which can be defined as
quotient manifold of the Stiefel manifold $G_{d,K} \coloneqq S_{d,K}/O(K)$. The equivalence classes
$[A]\coloneqq \{AQ :\ Q\in O(K)\}$ 
belonging to $G_{d,K}$
can be represented by elements $A$ of the Stiefel manifold.
The tangent space of $G_{d,K}$ at $[A]$ can be identified with its horizontal lift at $A$,
\[
	T_{[A]} G_{d,K} \coloneqq \left\{A_\perp Z: Z\in\mathbb{R}^{d-K,K} \right\}.
\]
Further, the Grassmannian becomes a Riemannian manifold by reducing the Riemannian metrics in i) or 
equivalently ii) to $T_{[A]} G_{d,K}$, i.e., for any representative $A \in S_{d,K}$ and $H_1,H_2 \in T_{[A]} G_{d,K}$, 
\[
	\langle H_1,H_2 \rangle_{[A]} \coloneqq \langle H_1,H_2 \rangle_A = \tr\Bigl(H_1^\tT \bigl(I_K - \frac 12 AA^\tT\bigr) H_2\Bigr) 
	= \tr\bigl(H_1^\tT H_2\bigr) = \langle H_1,H_2 \rangle_F.
\]
A possible choice for a metric on the Grassmannian is given by
\[
	d_{G_{d,K}}\bigl([A_1],[A_2]\bigr) \coloneqq \bigl\|A_1A_1^\tT - A_2A_2^\tT\bigr\|_2,
\]
where $A_1,A_2\in S_{d,K}$ and $\|\cdot\|_2$ is the spectral norm.\\
In PCA we aim to find an optimal subspace, which means that we are interested in elements of Grassmannians. 
In practice, working with equivalence classes is difficult and hence calculations are performed with representatives on the Stiefel manifold. 

The proposed optimization algorithms involve the orthogonal projection 
$\Pi_{S_{d,K}}\colon \mathbb R^{d,K} \rightarrow S_{d,K}$, i.e.,
\[
 	\Pi_{S_{d,K}}(M) = \argmin_{O\in S_{d,K}} \|M-O\|^2 =\argmax_{O\in S_{d,K}} \langle O,M \rangle.
\]
To this end, recall that the polar decomposition of
a matrix $M\in\mathbb{R}^{d,K}$ is given by $M = QS$, 
where $Q\in S_{d,K}$ and $S\in\mathbb{R}^{K,K}$ is symmetric and positive semi-definite. 
Starting with the (economy-size) singular value decomposition $M = U\Sigma V^\tT$, 
where $U\in S_{d,K}$, $\Sigma \in \RR^{K,K}$ is a diagonal matrix
and	and $V\in S_{K,K}$, 
the polar decomposition is determined by $Q\coloneqq \polar(M) \coloneqq UV^\tT$ and $S \coloneqq V \Sigma V^\tT$. 
The following lemma can be found e.g.~in \cite{JNRS10,podosinnikova2014robust}.

\begin{lemma}\label{lem:orth_proj}
	The orthogonal projection $\Pi_{S_{d,K}}\colon \mathbb R^{d,K} \rightarrow S_{d,K}$ is given by
	\[
	\Pi_{S_{d,K}}(M) = \polar(M).
	\]
	If $M$ has full rank, then $\polar(M) = M(M^\tT M)^{-\frac 12}$.
\end{lemma}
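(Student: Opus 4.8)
The plan is to solve the maximization $\Pi_{S_{d,K}}(M) = \argmax_{O \in S_{d,K}} \langle O, M\rangle$ directly by diagonalizing it through the economy-size singular value decomposition $M = U\Sigma V^\tT$, where $U \in S_{d,K}$, $V \in O(K) = S_{K,K}$ and $\Sigma \in \mathbb R^{K,K}$ is diagonal with entries $\sigma_1,\dots,\sigma_K \ge 0$. For an arbitrary $O \in S_{d,K}$ I would use the cyclic invariance of the trace to write
\[
 \langle O, M\rangle = \tr\bigl(O^\tT U \Sigma V^\tT\bigr) = \tr\bigl((V^\tT O^\tT U)\,\Sigma\bigr) = \sum_{i=1}^K \sigma_i\, Z_{ii}, \qquad Z \coloneqq V^\tT O^\tT U \in \mathbb R^{K,K}.
\]
Since the columns of $O$ and of $U$ are orthonormal and $V$ is orthogonal, the spectral norms of $O$, $O^\tT$, $U$, $V$ all equal $1$, so submultiplicativity of $\|\cdot\|_2$ gives $\|Z\|_2 \le 1$; in particular each column of $Z$ has Euclidean norm at most $1$, hence $Z_{ii} \le 1$ for every $i$.

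From this the upper bound $\langle O, M\rangle = \sum_i \sigma_i Z_{ii} \le \sum_i \sigma_i = \tr(\Sigma)$ follows for every admissible $O$. Next I would check that $O \coloneqq \polar(M) = UV^\tT$ is itself admissible, since $O^\tT O = V U^\tT U V^\tT = I_K$, and that it attains the bound: here $Z = V^\tT(VU^\tT)U = I_K$, so $\langle \polar(M), M\rangle = \tr(\Sigma)$. This shows $\polar(M)$ is a maximizer, i.e.~$\polar(M) \in \Pi_{S_{d,K}}(M)$. If one wants to pin down uniqueness in the full-rank case, one observes that when all $\sigma_i > 0$ optimality forces $Z_{ii} = 1$ for every $i$, and a matrix with $\|Z\|_2 \le 1$ and unit diagonal must equal $I_K$ (each column has norm $\le 1$ and $i$-th entry $1$, hence equals $e_i$); then $O^\tT U = V$, and a short computation gives $\|O - UV^\tT\|^2 = \tr(O^\tT O) - 2\tr(VV^\tT) + \tr(VU^\tT UV^\tT) = K - 2K + K = 0$, so $O = UV^\tT$.

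For the second claim, if $M$ has full rank then $\Sigma$ is invertible and $M^\tT M = V\Sigma^2 V^\tT$, whence $(M^\tT M)^{-\frac12} = V\Sigma^{-1}V^\tT$ and $M(M^\tT M)^{-\frac12} = U\Sigma V^\tT V\Sigma^{-1}V^\tT = UV^\tT = \polar(M)$. I do not expect a genuine obstacle here: the only steps needing a little care are the bookkeeping of matrix sizes in the trace rearrangement and the estimate $\|Z\|_2 \le 1$ (submultiplicativity together with $\|A\|_2 = 1$ for $A$ with orthonormal columns). The equality analysis is the mildly delicate point, since for rank-deficient $M$ the maximizer is genuinely non-unique, and the identity in the statement is then to be read as ``$\polar(M)$ is one of the orthogonal projections''.
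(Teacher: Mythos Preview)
The paper does not supply its own proof of this lemma; it simply cites \cite{JNRS10,podosinnikova2014robust} and moves on. Your argument is the standard SVD-based proof and is correct as written: the trace rearrangement, the bound $\|Z\|_2 \le 1$ via submultiplicativity (using that matrices with orthonormal columns have spectral norm $1$), and the verification that $UV^\tT$ attains the bound all go through. The uniqueness step in the full-rank case and the closed-form formula $M(M^\tT M)^{-1/2}$ are also handled cleanly. Your closing caveat about non-uniqueness for rank-deficient $M$ is apt and matches how the statement should be read.
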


\section{Model Analysis} \label{sec:model}
The main focus of this section lies on investigating the objective function
in \eqref{PCA_L1} and a related function
with respect to differentiability and convexity.
To be precise, we are actually only interested in minimizing over equivalence classes $[A]\coloneqq 	\{AQ :\ Q\in O(K)\}$.
Besides the objective function
\begin{equation} \label{E}
	E(A) = \sum_{i=1}^N E_i(A) \coloneqq  \sum_{i=1}^N \| P^{\smallperp}_A y_i \| = \sum_{i=1}^N \bigl\| (I_d - A A^\tT) y_i \bigr\| ,
\end{equation}
we also deal with the function
\begin{equation} \label{F}
	F(A) = \sum_{i=1}^N F_i(A) \coloneqq \sum_{i=1}^N \sqrt{y_i^\tT P^{\smallperp}_A y_i} = \sum_{i=1}^N \sqrt{\| y_i \|^2- \| A^\tT y_i\|^2}.
\end{equation}
Clearly, these two functions take the same values on the Stiefel manifold $S_{d,K}$.
However, they have quite different properties as functions on $\mathbb R^{d,K}$, but this was often neglected in existing approaches.
While $E$ is well-defined on the whole $\mathbb R^{d,K}$, the function 
$F$ is only well defined on the closed domain
\begin{equation}
	{\mathcal D} \coloneqq \bigcap\limits_{i=1}^N {\cal D}_i, \quad {\mathcal D}_i \coloneqq \left\{A \in \mathbb R^{d,K}: \| y_i \|^2- \| A^\tT y_i\|^2 \ge 0\right\}
\end{equation}
and therefore it is extended to the whole $\mathbb R^{d,K}$ by
\begin{equation} \label{ext_F}
	F(A) \coloneqq -\infty \quad \mathrm{if} \quad A \not \in  {\mathcal D}.
\end{equation}
For $A \in S_{d,K}$ and all $y \in \mathbb R^d$, it holds $\|A^\tT y\| \le \|y\|$ so that 
$S_{d,K} \subset {\mathcal D}$.
Further, $A \in S_{d,K} \cap \partial{\mathcal D}$ if and only if 
$\|P^{\smallperp}_A y_i\| = 0$ for some $i \in \{1,\ldots,N\}$.
The compact subset of $\mathbb R^{d,K}$
\begin{equation} \label{anchor}
	\mathcal A \coloneqq \bigl\{A \in S_{d,K}: \|P^{\smallperp}_A y_i\| = 0 \; \mbox{for some} \; i \in \{1,\ldots,N\} \bigr\}
\end{equation}
is called \emph{anchor set}.

In the simple case $N=d=K=1$ and $y_1=1$, the above functions read $E(A) = |1-A^2|$ and $F(A) = \sqrt{1-A^2}$ with $A \in \mathbb R$.
While the first function is locally Lipschitz continuous on $[-1,1]$, the second one does not have this property at $A = \pm 1$. 
The following two lemmata state properties of $E$ and $F$.

\begin{lemma} \label{propE}
The function $E$ defined by \eqref{E} is
locally Lipschitz continuous on $\mathbb R^{d,K}$.	
\end{lemma}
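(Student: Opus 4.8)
The plan is to show that $E = \sum_i E_i$ with $E_i(A) = \|(I_d - AA^\tT)y_i\|$, and since a finite sum of locally Lipschitz functions is locally Lipschitz, it suffices to treat each summand $E_i$ separately. First I would observe that the map $A \mapsto (I_d - AA^\tT)y_i$ is a polynomial (hence smooth, hence locally Lipschitz) map from $\mathbb R^{d,K}$ to $\mathbb R^d$: indeed each entry of $AA^\tT y_i$ is a quadratic polynomial in the entries of $A$. Then $E_i$ is the composition of this smooth map with the Euclidean norm $\|\cdot\|\colon \mathbb R^d \to \mathbb R$, which is globally Lipschitz with constant $1$.

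The key step is the standard fact that a composition $g \circ f$ is locally Lipschitz whenever $g$ is locally Lipschitz and $f$ is locally Lipschitz. Concretely, fix $A_0 \in \mathbb R^{d,K}$ and a bounded neighborhood $U$ of $A_0$; on the closure $\bar U$, which is compact, the smooth map $f\colon A \mapsto (I_d - AA^\tT)y_i$ has bounded derivative, so it is Lipschitz on $\bar U$ with some constant $L_U$; composing with the $1$-Lipschitz norm gives $|E_i(A) - E_i(A')| = \bigl|\,\|f(A)\| - \|f(A')\|\,\bigr| \le \|f(A) - f(A')\| \le L_U \|A - A'\|$ for all $A, A' \in U$. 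Summing over $i$ yields local Lipschitz continuity of $E$ with constant $\sum_i L_U^{(i)}$ on $U$.

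I expect no real obstacle here; the only point requiring a line of care is that $E_i$ is \emph{not} differentiable where $(I_d - AA^\tT)y_i = 0$ (the norm has a kink at the origin), which is precisely why the statement claims only local Lipschitz continuity rather than smoothness — but local Lipschitz continuity survives the kink because the outer norm is globally Lipschitz, not merely smooth away from $0$. This is in fact the crucial contrast with $F$, whose outer function $\sqrt{\cdot}$ has an \emph{infinite} slope at $0$, so the same argument fails there and $F$ genuinely loses local Lipschitz continuity on the anchor set. I would phrase the proof so as to highlight that the argument for $E$ uses only the global Lipschitz property of $\|\cdot\|$, thereby foreshadowing the failure for $F$.
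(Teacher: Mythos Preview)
Your proposal is correct and follows essentially the same approach as the paper: reduce to the summands $E_i$, use the $1$-Lipschitz property of the Euclidean norm (reverse triangle inequality), and then the local Lipschitz continuity of the polynomial map $A \mapsto (I_d - AA^\tT)y_i$. The only cosmetic difference is that the paper carries out the last step by hand via the identity $A_1A_1^\tT - A_2A_2^\tT = \tfrac12\bigl((A_1-A_2)(A_1^\tT+A_2^\tT) + (A_1+A_2)(A_1^\tT-A_2^\tT)\bigr)$ to obtain the explicit local constant $2(\|A\|+\varepsilon)\|y_i\|$, whereas you invoke smoothness on a compact set; both are equally valid.
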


\begin{proof}
It suffices to show the property for the summands $E_i$. For an arbitrary fixed $A \in S_{d,K}$,
let $\|A- A_i\| \le \varepsilon$, $i=1,2$.
Then, we obtain
\begin{align*}
	|E_i(A_1) - E_i(A_2)| &= \bigl| \, \|P^{\smallperp}_{A_1} y_i\| - \|P^{\smallperp}_{A_2} y_i\| \, \bigr|
	\le \|P^{\smallperp}_{A_1} y_i - P^{\smallperp}_{A_2} y_i\| \\
	&\le \bigl\|A_1 A_1^\tT - A_2 A_2^\tT\bigr\| \|y_i\| \\
	&= \frac12 \bigl\|(A_1 - A_2)(A_1^\tT + A_2^\tT)  + (A_1+ A_2)(A_1^\tT - A_2^\tT) \bigr\| \| y_i\|\\
	&\le 2 (\|A\| + \varepsilon) \|y_i\| \|A_1 - A_2\|. 
\end{align*}
\end{proof}

In general, the function $E$ is neither convex nor concave on ${\mathcal D}$, see Fig.~\ref{fig:con}.
\begin{figure}
\centering
	\includegraphics[width=0.5\textwidth]{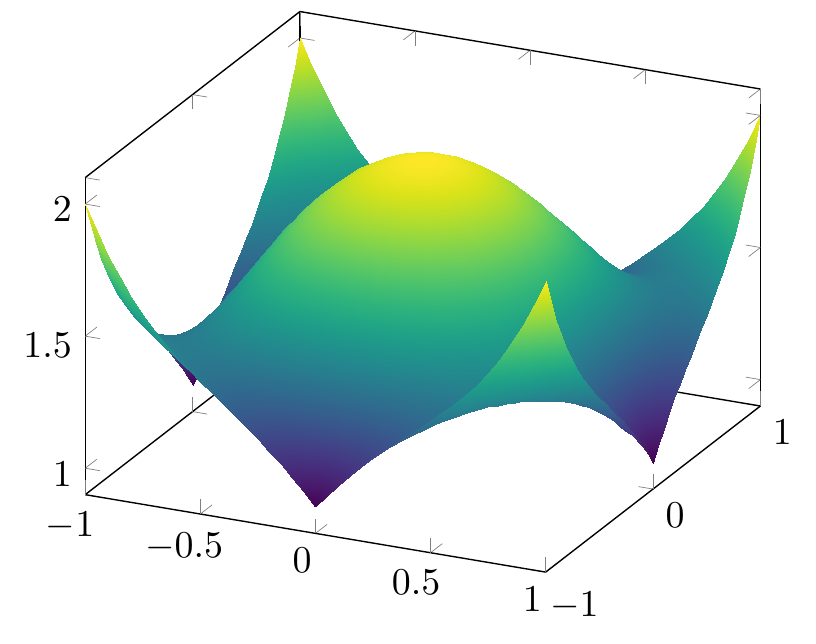}
	\caption{Plot of $E(A)$ on $\mathcal{D}$ for $d=2$ with data points $y_1=(1,0)^\tT$ and $y_2=(0,1)^\tT$. \label{fig:con} }
\end{figure}
In contrast, $F$ is concave as the following lemma shows.

\begin{lemma} \label{propF}
The function $F$  defined by \eqref{F} and \eqref{ext_F} fulfills the following relations:
	\begin{enumerate}
		\item $\mathrm{dom}  (-F) \coloneqq \{A \in \mathbb R^{d,K}: -F(A) < + \infty\} = {\mathcal D}$  is convex.
		\item $-F$ is convex.
		\item The subdifferential of $-F$ is empty at the boundary of ${\mathcal D}$, i.e.~at $A \in \mathbb R^{d,K}$ with $\|y_i\|^2 - \|A^\tT y_i\|^2 = 0$ for some $i \in \{1,\ldots,N\}$.
	\end{enumerate}
\end{lemma}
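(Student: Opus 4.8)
The plan is to establish the three claims in order, using (1) in the proof of (2) and (2) in the proof of (3). For (1), I would note that $A \mapsto \|A^\tT y_i\|^2$ is convex on $\RR^{d,K}$, being the composition of the convex function $v \mapsto \|v\|^2$ with the linear map $A \mapsto A^\tT y_i$. Hence each set $\mathcal D_i = \{A : \|A^\tT y_i\|^2 \le \|y_i\|^2\}$ is a sublevel set of a convex function and therefore convex, and $\mathcal D = \bigcap_{i=1}^{N}\mathcal D_i$ is convex as an intersection of convex sets. (Alternatively, once (2) is proved this is automatic, since the domain of a convex function is convex.)

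For (2), put $g_i(A) \coloneqq \|y_i\|^2 - \|A^\tT y_i\|^2$, which is concave on $\RR^{d,K}$ by the previous paragraph, and observe that $F_i = \sqrt{g_i}$ on $\mathcal D_i$. Extending $t \mapsto \sqrt t$ by $-\infty$ on $(-\infty,0)$ yields a concave, nondecreasing function on all of $\RR$, so its composition with $g_i$ is concave on $\RR^{d,K}$, coincides with $F_i$ on $\mathcal D_i$, and equals $-\infty$ off $\mathcal D_i$; equivalently $-F_i$ is proper and convex with $\dom(-F_i) = \mathcal D_i$. Summing over $i$ gives that $-F = \sum_{i=1}^{N}(-F_i)$ is convex with $\dom(-F) = \mathcal D$.

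For (3), I argue by contradiction. Let $A$ be a point with $\|y_{i_0}\|^2 - \|A^\tT y_{i_0}\|^2 = 0$ for some $i_0$; we may assume $A \in \mathcal D$ (otherwise $-F(A) = +\infty$ and the subdifferential is trivially empty) and $y_{i_0} \neq 0$ (otherwise this constraint is vacuous). Suppose $G \in \partial(-F)(A)$. I would test the subgradient inequality along the ray $\tau \mapsto (1-\tau)A$, $\tau \in [0,1]$, which stays in $\mathcal D$ because $\|((1-\tau)A)^\tT y_i\|^2 = (1-\tau)^2\|A^\tT y_i\|^2 \le \|y_i\|^2$. This gives
\[
  \frac{F(A) - F\bigl((1-\tau)A\bigr)}{\tau} \;\le\; -\langle G, A\rangle \qquad \text{for all } \tau \in (0,1].
\]
Splitting $F = \sum_i F_i$ by the active set $I \coloneqq \{i : \|A^\tT y_i\|^2 = \|y_i\|^2\} \ni i_0$: for $i \notin I$ the map $\tau \mapsto F_i((1-\tau)A) = \sqrt{\|y_i\|^2 - (1-\tau)^2\|A^\tT y_i\|^2}$ is continuously differentiable near $\tau = 0$, so its difference quotient stays bounded; for $i \in I$ one has $F_i(A) = 0$ and $F_i((1-\tau)A) = \|y_i\|\sqrt{2\tau - \tau^2}$, so the $i_0$-term of the difference quotient equals $-\|y_{i_0}\|\sqrt{(2-\tau)/\tau} \to -\infty$ as $\tau \to 0^+$ while the remaining active terms are $\le 0$. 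Thus the left-hand side above tends to $-\infty$, contradicting the finite bound $-\langle G, A\rangle$; hence $\partial(-F)(A) = \emptyset$.

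Parts (1) and (2) are routine, resting only on the convexity of $A \mapsto \|A^\tT y_i\|^2$ and the standard composition rules for convex and concave functions. The delicate point is (3): one must produce a feasible direction at the boundary point $A$ along which $-F$ has directional derivative $-\infty$. The structural reason the contraction direction $v = -A$ works is that scaling $A$ toward the origin relaxes every constraint $\|A^\tT y_i\| \le \|y_i\|$ simultaneously, so the test point stays in $\dom(-F) = \mathcal D$ while the square-root singularity of the active summand forces the blow-up.
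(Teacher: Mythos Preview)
Your proof is correct and, especially in part~(2), takes a more elementary route than the paper. For~(1) the paper verifies the convexity inequality for $\mathcal D_i$ by a direct expansion, while you invoke the sublevel-set argument; both are fine. For~(2) the paper argues via a smoothing $F_\varepsilon(A)=\sqrt{\|y_i\|^2-\|A^\tT y_i\|^2+\varepsilon}$, computes its Hessian explicitly, shows it is negative semidefinite, and then passes to the infimum over $\varepsilon>0$; your composition argument (concave nondecreasing $\sqrt{\,\cdot\,}$ composed with concave $g_i$) bypasses all of that and is considerably shorter. What the paper's approach buys is the $F_\varepsilon$ smoothing itself, which it reuses later as a possible remedy near anchor points.

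For~(3) both proofs use the same test direction, contracting $A$ toward the origin. The paper phrases it as showing $\partial(-F_i)(A_0)=\emptyset$ for a single active index~$i$ (tacitly relying on the Moreau--Rockafellar sum rule to transfer this to $-F$), whereas you work with the full $F$ directly and split the difference quotient into active and inactive parts. Your version is self-contained and avoids the implicit appeal to the sum rule. One small point: you should also note that if $y_{i_0}=0$ the constraint $\|A^\tT y_{i_0}\|^2=\|y_{i_0}\|^2$ is satisfied for every $A$ and never puts $A$ on $\partial\mathcal D$, which you do mention; this is needed so that the blow-up coefficient $\|y_{i_0}\|$ is strictly positive.
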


\begin{proof}
\begin{enumerate}
			\item 
			It holds $A \in \mathrm{dom} (-F)$ if and only if 
			\begin{equation} \label{est}
				 \|y_i\|^2 - \|A^\tT y_i\|^2 \geq 0
			\end{equation}
			for all $i=1,\ldots,N$. 
			Since the intersection of convex sets is convex,
			it suffices to show convexity of $\mathrm{dom} (- F_i)$ separately.
			Let $A_1,A_2\in \mathrm{dom} (- F_i)$. 
			Then, using \eqref{est}, we obtain for $\lambda \in [0,1]$ that
			\begin{align*}
				 &\|y_i\|^2 - \|(\lambda A_1 + (1-\lambda) A_2)^\tT y_i\|^2\\
				&= 
				\|y_i\|^2 - \left( \lambda^2 \|A_1^\tT y_i\|^2 + 2\lambda(1-\lambda)\langle A_1^\tT y_i, A_2^\tT y_i \rangle + (1-\lambda)^2\|A_2^\tT y_i\|^2 \right)\\
				&\geq 
				\|y_i\|^2 - \left( \lambda^2 \|A_1^\tT y_i\|^2 + 2\lambda(1-\lambda)\| A_1^\tT y_i\| \|A_2^\tT y_i\| + (1-\lambda)^2\|A_2^\tT y_i\|^2 \right)\\
				&\geq 
				\|y_i\|^2 - \left( \lambda^2 \|y_i\|^2 + 2\lambda(1-\lambda)\|y_i\|^2 + (1-\lambda)^2\|y_i\|^2 \right) = 0.
			\end{align*}
			Thus, $\lambda A_1 + (1-\lambda) A_2 \in \mathrm{dom} (-F_i)$ and the claim follows.
			
			\item 
			Since the sum of concave functions is concave again,  it suffices to consider the individual summands $F_i$ again.
			For $\varepsilon > 0$, we define 
			$$F_\varepsilon(A) \coloneqq \sqrt{\|y_i\|^2 - \|A^\tT y_i\|^2 + \varepsilon},$$ 
			which is differentiable on an open set containing ${\mathcal D}_i$. 
			By the chain rule and since 
			$\frac{\partial}{\partial A}\tr(y_i^\tT AA^\tT y_i) = 2y_i y_i^\tT A$, 
			the gradient of $F_\varepsilon$ is
			\[
			\nabla F_\varepsilon(A) = -\frac{1}{F_\varepsilon(A)}y_iy_i^\tT A.
			\]
			Using the product rule and the chain rule, the Hessian is given by
			\[
			\nabla^2 F_\varepsilon(A)[H] = -\frac{1}{F_\varepsilon(A)^2}\Bigl( y_iy_i^\tT H F_\varepsilon(A) + \frac{1}{F_\varepsilon(A)}\langle y_iy_i^\tT A,H \rangle y_iy_i^\tT A \Bigr),
			\]
			for all $H \in \mathbb R^{d,K}$ so that
			\begin{align*}
			\bigl\langle \nabla_A^2 F_\varepsilon(A)[H],H \bigr\rangle 
			&= -\frac{1}{F_\varepsilon(A)^2}\Bigl( F_\varepsilon(A)\langle y_iy_i^\tT H,H\rangle  + \frac{1}{F_\varepsilon(A)}\langle y_iy_i^\tT A,H \rangle^2 \Bigr)\\
			&= -\frac{1}{F_\varepsilon(A)^2}\Bigl( F_\varepsilon(A)\|H^\tT y_i\|^2  + \frac{1}{F_\varepsilon(A)}\langle y_iy_i^\tT A,H \rangle^2 \Bigr)
			\leq 0.
			\end{align*}
			Consequently, the Hessian is negative semidefinite and $F_\varepsilon$ concave in ${\mathcal D}_i$ for all $\varepsilon>0$. Finally,
			\[
				F_i = \inf \{ F_\varepsilon: \varepsilon>0 \}
			\]
			is concave as the pointwise infimum of a family of concave functions.
			
			\item For an arbitrary fixed $i \in \{1,\ldots,N\}$, let $A_0 \in \mathbb R^{d,K}$ with $\| y_i \|^2 - \| A_0^\tT y_i\|^2 = 0$.
			We consider the subdifferential of $F_i$ at $A_0$ given by
			\begin{align*}
			\partial F_i(A_0) 
			&= \left\{P \in \mathbb R^{d,K}: - F_i(A) \ge -F_i(A_0) + \langle P,A-A_0 \rangle \quad \forall \; A \in \mathbb R^{d,K} \right\}
			\\
			&=  \Bigl\{P \in \mathbb R^{d,K}: -\sqrt{\| A_0^\tT y_i\|^2 - \| A^\tT y_i\|^2} \ge \langle P,A-A_0 \rangle 
			\quad \forall \; A \in \mathbb R^{d,K} \Bigr\}.
			\end{align*}
			Choosing $A \coloneqq \alpha A_0$ with $\alpha \in [0,1]$, a subgradient $P$ must fulfill
			\begin{align*}
			 -\| A^\tT y_i\| \sqrt{1-\alpha^2} &\ge  (\alpha - 1) \langle P,A_0 \rangle,\\
			 \| A^\tT y_i\| \sqrt{1+\alpha} &\le \sqrt{1-\alpha}  \; \langle P,A_0 \rangle,
			 \end{align*}
			 which leads to a contradiction if $\alpha \rightarrow 1$. Hence, the subdifferential is empty.
	\end{enumerate}
\end{proof}

For the algorithms the gradient and the Riemannian gradient on the Grassmannian of the functions $E$ and $F$ are required.

\begin{lemma} \label{lem:derivatives_E_F}
Let $E$ and $F$ be defined by \eqref{E} and \eqref{F}, respectively.
Then, the gradient $\nabla$
and the Riemannian gradient $\nabla_A$ on $S_{d,K}$ 
at $A \in S_{d,K}\backslash {\mathcal A}$ are given by
\begin{equation} \label{der_E_F}
\nabla F(A) = - C_A \, A, \quad
\nabla E(A) = \nabla_{A} E (A) = \nabla_{A} F(A) = - P^{\smallperp}_A \, C_A \, A ,
\end{equation}
where
\begin{equation} \label{eq:def_C}
C_{A} \coloneqq \sum_{i=1}^N \frac{y_i y_i^\tT}{\|P^{\smallperp}_A y_i\|}.
\end{equation}
\end{lemma}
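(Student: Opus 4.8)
The plan is to argue summand by summand and to use that on $S_{d,K}\setminus\mathcal A$ each quantity $\|P^{\smallperp}_A y_i\|^2 = \|y_i\|^2 - \|A^\tT y_i\|^2$ is strictly positive, so that $E_i$ and $F_i$ are smooth in a neighbourhood of such an $A$ in $\mathbb R^{d,K}$; the anchor set $\mathcal A$ is excluded precisely because there a denominator $\|P^{\smallperp}_A y_i\|$ would vanish. Since $E = \sum_i E_i$, $F = \sum_i F_i$, it suffices to differentiate the individual terms and sum.

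For $F_i$ I would square: $F_i(A)^2 = \|y_i\|^2 - \tr(A^\tT y_i y_i^\tT A)$, and differentiate using $\frac{\partial}{\partial A}\tr(A^\tT y_i y_i^\tT A) = 2 y_i y_i^\tT A$ as in the proof of Lemma~\ref{propF}. This gives $2F_i(A)\nabla F_i(A) = -2 y_i y_i^\tT A$, hence $\nabla F_i(A) = -y_i y_i^\tT A / \|P^{\smallperp}_A y_i\|$ on $S_{d,K}\setminus\mathcal A$, and summation yields $\nabla F(A) = -C_A A$. For $E_i(A) = \|(I_d - AA^\tT)y_i\|$ I would differentiate directly: with $g(A) \coloneqq (I_d - AA^\tT)y_i$ one has $Dg(A)[H] = -(H A^\tT y_i + A H^\tT y_i)$ and $DE_i(A)[H] = \langle g(A), Dg(A)[H]\rangle / \|g(A)\|$. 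The key observation is that on $S_{d,K}$ the contribution $\langle g(A), A H^\tT y_i\rangle = \langle A^\tT g(A), H^\tT y_i\rangle$ vanishes because $A^\tT(I_d - AA^\tT) = 0$; only $-\langle g(A), H A^\tT y_i\rangle = -\langle g(A) y_i^\tT A, H\rangle$ remains, so $\nabla E_i(A) = -(I_d - AA^\tT) y_i y_i^\tT A / \|P^{\smallperp}_A y_i\|$ and therefore $\nabla E(A) = -P^{\smallperp}_A C_A A$, i.e.\ $\nabla E(A) = P^{\smallperp}_A \nabla F(A)$.

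For the Riemannian gradients I would invoke that, $S_{d,K}$ being an embedded submanifold with the Frobenius metric, $\nabla_A f(A)$ is the orthogonal projection of the ambient Euclidean gradient onto $T_A S_{d,K}$, with $\Pi_{T_A S_{d,K}}(M) = M - A\,\mathrm{sym}(A^\tT M)$ where $\mathrm{sym}(B) = \frac12(B + B^\tT)$. For $E$ one checks $A^\tT\nabla E(A) = -A^\tT P^{\smallperp}_A C_A A = 0$, so $\nabla E(A)$ already lies in $T_A S_{d,K}$ — in fact in the horizontal space $\{A_\perp Z\}$ — whence $\nabla_A E(A) = \nabla E(A)$. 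For $F$, since $C_A$ is symmetric, $A^\tT C_A A$ is symmetric, so $\Pi_{T_A S_{d,K}}(-C_A A) = -C_A A + A A^\tT C_A A = -P^{\smallperp}_A C_A A$, giving $\nabla_A F(A) = \nabla_A E(A)$; alternatively this already follows from $E = F$ on $S_{d,K}$. The one point that needs care is to keep $E$ and $F$ apart as functions on $\mathbb R^{d,K}$: their Euclidean gradients genuinely differ (by the factor $P^{\smallperp}_A$), and the cancellation that makes $\nabla E(A)$ automatically tangential relies essentially on $A^\tT A = I_K$ — it does not hold at nearby non-Stiefel matrices.
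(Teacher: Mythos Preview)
Your argument is correct and follows essentially the same route as the paper: summand-wise differentiation of $E_i$ and $F_i$, using that on $S_{d,K}$ the term coming from $A H^\tT y_i$ (equivalently $y_i y_i^\tT P^{\smallperp}_A A$) vanishes because $A^\tT P^{\smallperp}_A = 0$. If anything, you give more detail than the paper on why the Euclidean gradient of $E$ already lies in $T_A S_{d,K}$ and on the explicit tangent-space projection of $\nabla F(A)$; the paper simply asserts that $\nabla E_i(A)$ coincides with the Riemannian gradient at Stiefel points.
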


Note that $- P^{\smallperp}_A \, C_A \, A$ is also the horizontal lift of the gradient
$\nabla_{[A]} \tilde E ([A])$ on the Grassmannian at $A$, where $E = \tilde E \circ \pi$
and $\pi$ is the projection from $S_{K,d}$ onto $G_{K,d}$.

\begin{proof}
By straightforward computation we obtain for $A \in \mathbb R^{d,K}$ that
 \begin{align*}
\nabla E_i(A) &= - \frac{1}{\|P^{\smallperp}_A y_i\|} \left( P^{\smallperp}_A y_i y_i^\tT A + y_i y_i^\tT P^{\smallperp}_A A \right) \quad \mbox{if} \quad P^{\smallperp}_A y_i \not = 0,\\
\nabla F_i(A) &= - \frac{1}{ \bigl( \|y_i\|^2 - \|A^\tT y_i\|^2 \bigr)^{-\frac12}}  y_i y_i^\tT A  \qquad \; \; \mbox{if} 
\quad A \in  \mathrm{int}\bigl(\mathrm{dom} (-F_i)\bigr) = \mathrm{int} (\mathcal{D}_i).
\end{align*}
For $A \in S_{d,K}$ the gradient of $E_i$ coincides with the Riemannian gradient of $E_i$ on $S_{d,K}$, i.e.,
$$
\nabla_{A} E_i(A) = - \frac{1}{\|P^{\smallperp}_A y_i\|}  P^{\smallperp}_A y_i y_i^\tT A \quad \mbox{for} \quad A \in S_{d,K}.
$$
This implies the assertion.
\end{proof}

We call $A \in  S_{d,K} \backslash \mathcal{A}$ a critical point of $F$, resp.~$E$
if 
\begin{equation} \label{crit_point}
\nabla_{A} E (A) = \nabla_{A} F(A) = - P^{\smallperp}_A \, C_A \, A = 0.
\end{equation}

\section{Minimization Algorithm} \label{sec:mini_1}
In this section, we show that the constrained minimization algorithm of Ding et al.~\cite{ding2006r} can be interpreted as
i) a conditional gradient algorithm, and ii) a gradient descent algorithm on Grassmannians.
The conditional gradient algorithm, also known as Frank-Wolfe algorithm, was originally proposed 1956 in \cite{FW1956} for solving linearly constrained quadratic programs 
and was later adapted to other problems.
For a good overview, we refer to \cite{LT13} and the references therein.
Then it follows from general results on the algorithm that \emph{a subsequence of the iterates} converges under strong conditions on the anchor points
to a critical point of the functional.

\paragraph{Constrained Minimization Algorithm.}
Ding et al.~\cite{ding2006r} consider the constrained optimization problem
\begin{equation}
\argmin_{A \in \mathbb R^{d,K}} F(A) \quad \mbox{subject to} \quad A^\tT A = I_K.
\label{eq:model_constr}
\end{equation}
The authors claimed 
without proof 
that the function $F$ is convex in $A A^\tT$
and has a unique global minimizer. 
Both statements are not correct: for $N=K=d=1$ and $y_1 = 1$ it is easy to check that $F(A) = \sqrt{1-A^2}$ is concave in $A^2 \in [0,1]$;
for $N=2$, $K=1$, $d=2$ with centered data points $y_1 = (-1/2,\sqrt{3}/2)^\tT$ and $y_2 = (1/2,\sqrt{3}/2)^\tT$ the
minimizers of $F(A)$ are given by $A = (-1/2,\sqrt{3}/2)^\tT$ and $A= (1/2,\sqrt{3}/2)^\tT$, which span different subspaces.
Penalizing the constraint in \eqref{eq:model_constr} via a symmetric Lagrange multiplier $\Lambda \in \mathbb R^{K,K}$,
setting the gradient of the resulting Lagrangian
$L(A,\Lambda) \coloneqq F(A) + \langle \Lambda, A^T A - I_K\rangle$ with respect to $A$ to zero and applying an orthogonalization procedure, see Lemma~\ref{lem:orth_proj}, the authors 
arrive at the following iteration scheme: if $A^{(r)} \not \in \mathcal{A}$,
\begin{equation} \label{eq:scheme_ding}
 A^{(r+1)} \coloneqq \Pi_{S_{d,K}}\left(C_{A^{(r)}}  A^{(r)}\right) =  C_{A^{(r)}} A^{(r)} \Big( \big(A^{(r)} \big)^\tT C_{A^{(r)}}^2 A^{(r)} \Big)^{-\frac12}.
\end{equation}

\paragraph{Conditional Gradient Algorithm.}
The conditional gradient algorithm is commonly used to minimize a convex function over a compact set. 
However, as in \cite{LT13}, we apply it for maximizing the convex function $-F$.

In general, for a proper convex function $f\colon \mathbb R^n \rightarrow \mathbb R \cup \{+\infty\}$ and 
a nonempty, compact set 
${\mathcal{C}} \subset \mathrm{int}(\dom f)$, the
conditional gradient algorithm is the update scheme
\begin{equation}\label{eq:condgrad}
	u^{(r+1)} \in \argmax_{u \in {\mathcal{C}}} \left\langle  u- u^{(r)}, p^{(r)} \right\rangle = \argmax_{u \in {\mathcal{C}}} \left\langle  u, p^{(r)} \right\rangle , \quad p^{(r)} \in \partial f\bigl(u^{(r)}\bigr).
\end{equation}
Note that according to \cite[Corollary~32.4.1]{Ro1970}, the value $\hat u \in {\mathcal{C}}$ is a local maximizer
of $f$ over ${\mathcal{C}}$ if for all $v \in {\mathcal{C}}$,
\begin{equation}\label{cond_min}
 \bigl\langle v - \hat u, \hat p\bigr\rangle \le 0 \quad \mbox{for all} \quad \hat p \in \partial f (\hat u).
\end{equation}
By definition of the subdifferential we have
\begin{equation} \label{cond_cond}
 f\bigl(u^{(r+1)}\bigr) - f\bigl( u^{(r)}\bigr) \ge \left\langle u^{(r+1)} - u^{(r)},p^{(r)} \right\rangle = \max_{v \in {\mathcal{C}}} \left\langle v-u^{(r)},p^{(r)} \right\rangle \ge 0,
\end{equation}
where the last equation follows by choosing $v=u^{(r)} \in {\mathcal{C}}$.

For finite convex functions $f\colon \mathbb R^n \rightarrow \mathbb R$ 
the following convergence result was proved in \cite{LT13} based on \cite{Mang1996}.
The proof can be modified for $f$ with values in the extended real numbers and ${\mathcal{C}}\subset \mathrm{int}(\dom f)$
in a straightforward way.

\begin{theorem}\label{thm:conv_cond_g}
Let $f\colon \mathbb R^n \rightarrow \mathbb R \cup \{+\infty\}$
a proper convex function and ${\mathcal{C}} \subset \mathrm{int}(\dom f)$
a nonempty, compact set.
Then the sequence $\{f(u^{(r)})\}_r$ generated by \eqref{eq:condgrad} is strictly increasing
except when $\max_{u \in {\mathcal{C}}}  \langle u- u^{(r)}, p^{(r)}\rangle = 0$,  
in which case it terminates at $u^{(r)}$ satisfying \eqref{cond_min}.
If $f$ is continuously differentiable on $\mathrm{int}(\dom f)$,
then every accumulation point $\hat u$ of the sequence $\{u^{(r)} \}_r$ fulfills \eqref{cond_min}.
\end{theorem}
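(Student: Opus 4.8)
The plan is to adapt the argument of \cite{LT13,Mang1996} to the extended-real-valued setting, the only new feature being that $f$ may take the value $+\infty$ outside $\dom f$, which is harmless because ${\mathcal C} \subset \interior(\dom f)$ keeps all iterates in the interior where $f$ is finite and the subdifferential is nonempty. First I would establish the monotonicity claim: by \eqref{cond_cond} we have $f(u^{(r+1)}) - f(u^{(r)}) \ge \max_{v\in{\mathcal C}}\langle v - u^{(r)}, p^{(r)}\rangle \ge 0$, so the sequence $\{f(u^{(r)})\}_r$ is nondecreasing. To upgrade this to strict increase unless the max vanishes, I would argue that if $\max_{v\in{\mathcal C}}\langle v - u^{(r)}, p^{(r)}\rangle = \delta > 0$, then $u^{(r+1)}$ achieves this inner product, and since $p^{(r)} \in \partial f(u^{(r)})$ with $u^{(r)} \in \interior(\dom f)$, the subgradient inequality gives $f(u^{(r+1)}) \ge f(u^{(r)}) + \delta > f(u^{(r)})$. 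Conversely, if the max equals $0$, then by definition \eqref{cond_min} holds at $u^{(r)}$ for that particular $p^{(r)}$; but actually one needs it for \emph{all} $\hat p \in \partial f(\hat u)$, so I would note that the update picks $u^{(r+1)} = u^{(r)}$ in this case and invoke the optimality characterization, observing that once the scheme is stationary the condition \eqref{cond_min} as stated (quantified over all subgradients) is exactly the content of \cite[Corollary~32.4.1]{Ro1970} being reached — here I should be a little careful and may instead phrase the termination conclusion as: the algorithm stops at a point satisfying the first-order condition $\max_{v\in{\mathcal C}}\langle v - u^{(r)}, p^{(r)}\rangle = 0$ for the chosen $p^{(r)}$, which is \eqref{cond_min} for that $p^{(r)}$.

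For the accumulation-point statement, assume $f$ is continuously differentiable on $\interior(\dom f)$, so $\partial f(u) = \{\nabla f(u)\}$ is single-valued and $p^{(r)} = \nabla f(u^{(r)})$ depends continuously on $u^{(r)}$. Let $\hat u$ be an accumulation point, with $u^{(r_k)} \to \hat u$ along a subsequence; since ${\mathcal C}$ is compact and contained in $\interior(\dom f)$, $\hat u \in {\mathcal C} \subset \interior(\dom f)$ and $\nabla f$ is continuous at $\hat u$. Suppose for contradiction that \eqref{cond_min} fails at $\hat u$, i.e.\ there is $\bar v \in {\mathcal C}$ with $\langle \bar v - \hat u, \nabla f(\hat u)\rangle = 2\eta > 0$. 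By continuity, $\langle \bar v - u^{(r_k)}, \nabla f(u^{(r_k)})\rangle > \eta$ for $k$ large, hence $\max_{v\in{\mathcal C}}\langle v - u^{(r_k)}, p^{(r_k)}\rangle > \eta$, and then $f(u^{(r_k+1)}) \ge f(u^{(r_k)}) + \eta$. Combined with the monotonicity from the first part, $\{f(u^{(r)})\}_r$ is nondecreasing and has a subsequence along which consecutive terms increase by at least $\eta$; but $f$ is continuous at $\hat u$ so $f(u^{(r_k)}) \to f(\hat u)$, forcing $\{f(u^{(r)})\}_r$ to be bounded above (it converges to $f(\hat u)$), which contradicts the existence of infinitely many jumps of size $\ge \eta$. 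Therefore \eqref{cond_min} holds at $\hat u$.

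The main obstacle I expect is the bookkeeping around the extended-real-valued subgradient inequality and ensuring the iterates genuinely stay in $\interior(\dom f)$ — this is where the hypothesis ${\mathcal C}\subset\interior(\dom f)$ (rather than merely ${\mathcal C}\subset\dom f$) is essential, because it guarantees $\partial f(u^{(r)}) \ne \emptyset$ at every step and lets us use the subgradient inequality $f(v) \ge f(u^{(r)}) + \langle v - u^{(r)}, p^{(r)}\rangle$ with finite right-hand side for all $v \in {\mathcal C}$. A secondary subtlety is the precise form of the termination conclusion: \eqref{cond_min} as written quantifies over \emph{all} subgradients $\hat p$, whereas the algorithm only ever ``sees'' one subgradient per iterate, so strictly speaking termination with $\max_{v}\langle v - u^{(r)}, p^{(r)}\rangle = 0$ yields \eqref{cond_min} only for the selected $p^{(r)}$; in the differentiable case this gap disappears since the subgradient is unique, and that is the case relevant to the applications of $E$ and $F$ in this paper.
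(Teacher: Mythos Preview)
The paper does not supply its own proof of this theorem: it cites \cite{LT13,Mang1996} for the finite-valued case and remarks that ``the proof can be modified for $f$ with values in the extended real numbers and ${\mathcal{C}}\subset \mathrm{int}(\dom f)$ in a straightforward way.'' Your proposal is exactly that straightforward modification, and the argument is correct --- monotonicity via \eqref{cond_cond}, strict increase unless the max vanishes, and the contradiction-by-unbounded-jumps argument for accumulation points under continuous differentiability are all standard and sound; your use of compactness to ensure $\hat u \in {\mathcal C} \subset \interior(\dom f)$ and hence continuity of $\nabla f$ at $\hat u$ is precisely where the hypothesis ${\mathcal C}\subset\interior(\dom f)$ earns its keep.

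Your observation about the quantifier in \eqref{cond_min} is a genuine and valid point: termination with $\max_{v}\langle v - u^{(r)}, p^{(r)}\rangle = 0$ only certifies the condition for the \emph{selected} subgradient $p^{(r)}$, not for all $\hat p \in \partial f(u^{(r)})$ as \eqref{cond_min} literally reads. You are right that this gap disappears in the differentiable case (which is the only case used later in the paper, since $-F$ is smooth on $\interior(\dom(-F))$), so the imprecision is harmless for the applications in Sections~\ref{sec:mini_1}--\ref{sec:convergence}.
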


We  want to apply the scheme \eqref{eq:condgrad} for $f \coloneqq -F$ and
${\mathcal{C}} = {\mathcal{C}}_\varepsilon \coloneqq S_{d,K} \backslash {\mathcal A}_\varepsilon \subset \mathrm{int}\left(\dom (-F) \right)$,
where 
$${\mathcal A}_\varepsilon\coloneqq \bigl\{ A \in \mathbb S_{d,K}: \mathrm{dist}(A,{\mathcal A}) < \varepsilon\bigr\}$$
denotes the set of matrices in $S_{d,K}$
having a distance smaller than some $\varepsilon >0$ from the anchor set.
To this end, the maximization problem \eqref{eq:condgrad} has to be solved, but first we have to find a suitable $\varepsilon$. For this purpose define the iteration
\begin{equation}\label{eq:condgrad_E}
	A^{(r+1)} = \argmax_{U \in S_{d,K}} \left\langle U, \nabla (-F) \bigl( A^{(r)}\bigr) \right\rangle = \argmax_{U \in S_{d,K}} \left\langle U,C_{A^{(r)}} A^{(r)} \right\rangle
\end{equation}
for $A^{(r)}\notin \mathcal{A}$, where we plugged $\partial F(A^{(r)}) = \{\nabla F(A^{(r)})\}$ as given in Lemma~\ref{lem:derivatives_E_F} into \eqref{eq:condgrad}.
Now, assume that we can find $\varepsilon>0$ such that $A^{(r)}\in \mathcal{C}_\varepsilon$ for all $r\geq 0$. In this case
\[
	A^{(r+1)} = \argmax_{U \in \mathcal{C}_\varepsilon} \left\langle U,C_{A^{(r)}} A^{(r)} \right\rangle.
\]
Note that we can always find such an $\varepsilon$ for $r$ large enough if all accumulation points of $A^{(r)}$ as in \eqref{eq:condgrad_E} are non-anchor points.
Using Lemma \ref{lem:orth_proj} we obtain
\begin{align}\label{cond_grad=ding}
A^{(r+1)}
	= \Pi_{S_{d,K}}\bigl(C_{A^{(r)}}  A^{(r)}\bigr) 
	=  C_{A^{(r)}} A^{(r)} \Big( \big(A^{(r)} \big)^\tT C_{A^{(r)}}^2 A^{(r)} \Big)^{-\frac12},
\end{align}
which is exactly the iteration scheme \eqref{eq:scheme_ding} proposed by Ding et al.
Based on Theorem \ref{thm:conv_cond_g}, we have the following corollary for our special setting.

\begin{corollary} \label{cor:cond_grad}
 Let $F$ be defined by \eqref{F}-\eqref{ext_F}. Assume that the sequence $\{A^{(r)} \}_r$ generated
 by \eqref{cond_grad=ding} has no element in ${\mathcal A}$ and that the set of accumulation points has a positive distance from ${\mathcal A}$.
 Then the sequence $\{F \left( A^{(r)} \right) \}_r$ is strictly decreasing except for iterates where
 \begin{equation} \label{bed_1}
  \biggl \langle C_{A^{(r)}} A^{(r)} \left( \big(A^{(r)} \big)^\tT C_{A^{(r)}}^2 A^{(r)} \right)^{-\frac12} - A^{(r)}, C_{A^{(r)}} A^{(r)} \biggr\rangle =0,
 \end{equation}
in which case the iteration terminates at $A^{(r)}$ which is a critical point.
Condition \eqref{bed_1} is equivalent to $A^{(r+1)} = A^{(r)}$, resp.~to $\nabla_{A^{(r)}} F (A^{(r)}) = 0$.
If the iteration does not terminate after a finite number of steps, every accumulation point of $\{A^{(r)} \}_r$ is a critical point.
\end{corollary}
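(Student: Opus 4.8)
The plan is to read the corollary off from Theorem~\ref{thm:conv_cond_g}, applied to the proper convex function $f \coloneqq -F$ on a suitable compact constraint set, and then to translate the abstract optimality and termination statements into the concrete ones about critical points.

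\emph{Set-up.}
First I would fix the constraint set. The set of accumulation points of $\{A^{(r)}\}_r$ is a closed, hence compact, subset of $S_{d,K}$ on which the continuous map $A \mapsto \mathrm{dist}(A,\mathcal A)$ attains, by hypothesis, a positive minimum $\delta$. A standard compactness argument (a subsequence escaping every neighbourhood of the accumulation set would itself have an accumulation point outside it) shows that $A^{(r)}$ eventually lies within distance $\delta/2$ of this set; combining this with $A^{(r)} \notin \mathcal A$ for the finitely many remaining indices, one obtains $\varepsilon \in (0,\delta)$ with $A^{(r)} \in \mathcal{C}_\varepsilon \coloneqq S_{d,K} \setminus \mathcal{A}_\varepsilon$ for every $r$. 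The set $\mathcal{C}_\varepsilon$ is nonempty and compact (closed in the compact $S_{d,K}$), and since $S_{d,K} \cap \partial\mathcal D = \mathcal A$ we get $\mathcal{C}_\varepsilon \subset S_{d,K}\setminus\mathcal A \subset \mathrm{int}(\mathcal D) = \mathrm{int}\bigl(\dom(-F)\bigr)$. By Lemma~\ref{propF}(1)--(2), $-F$ is proper convex, and by Lemma~\ref{lem:derivatives_E_F} it is continuously differentiable on $\mathrm{int}(\mathcal D)$ with $\nabla(-F)(A) = C_A A$, so $\partial(-F)(A^{(r)}) = \{C_{A^{(r)}}A^{(r)}\}$. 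Finally, the global maximiser $\Pi_{S_{d,K}}(C_{A^{(r)}}A^{(r)})$ of $\langle\,\cdot\,,C_{A^{(r)}}A^{(r)}\rangle$ over $S_{d,K}$ equals $A^{(r+1)}$ by Lemma~\ref{lem:orth_proj} and \eqref{cond_grad=ding}, and lies in $\mathcal{C}_\varepsilon$; being a maximiser over the larger set $S_{d,K}$, it is also the maximiser over $\mathcal{C}_\varepsilon$, so \eqref{cond_grad=ding} is precisely the conditional gradient update \eqref{eq:condgrad} for $f = -F$ and $\mathcal{C} = \mathcal{C}_\varepsilon$. Thus Theorem~\ref{thm:conv_cond_g} applies and yields that $\{-F(A^{(r)})\}_r$ is strictly increasing --- equivalently $\{F(A^{(r)})\}_r$ strictly decreasing --- unless $\max_{U\in\mathcal{C}_\varepsilon}\langle U - A^{(r)}, C_{A^{(r)}}A^{(r)}\rangle = 0$, which by the above is exactly \eqref{bed_1}, in which case the iteration stops at $A^{(r)}$ satisfying \eqref{cond_min}.

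\emph{Equivalences.}
Since $A^{(r+1)}$ maximises $\langle\,\cdot\,,C_{A^{(r)}}A^{(r)}\rangle$ over $S_{d,K} \ni A^{(r)}$, condition \eqref{bed_1} forces $A^{(r)}$ to be a maximiser as well, hence $A^{(r)} = \Pi_{S_{d,K}}(C_{A^{(r)}}A^{(r)}) = A^{(r+1)}$ by uniqueness of the polar factor (which holds as $C_{A^{(r)}}A^{(r)}$ has full column rank, as is implicit in \eqref{cond_grad=ding}). In turn $A^{(r)} = \polar(C_{A^{(r)}}A^{(r)})$ means, via the polar decomposition, that $C_{A^{(r)}}A^{(r)} = A^{(r)} S$ with $S = (A^{(r)})^\tT C_{A^{(r)}}A^{(r)}$ symmetric positive semidefinite, whence
\[
	P^{\smallperp}_{A^{(r)}} C_{A^{(r)}} A^{(r)} = C_{A^{(r)}} A^{(r)} - A^{(r)}(A^{(r)})^\tT C_{A^{(r)}} A^{(r)} = 0,
\]
i.e.\ $\nabla_{A^{(r)}} F(A^{(r)}) = 0$; conversely, $\nabla_{A^{(r)}} F(A^{(r)}) = 0$ writes $C_{A^{(r)}}A^{(r)} = A^{(r)}\bigl((A^{(r)})^\tT C_{A^{(r)}}A^{(r)}\bigr)$ in exactly this polar form, so $A^{(r+1)} = A^{(r)}$ and \eqref{bed_1} holds trivially. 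This establishes \eqref{bed_1} $\Leftrightarrow A^{(r+1)} = A^{(r)} \Leftrightarrow \nabla_{A^{(r)}} F(A^{(r)}) = 0$, i.e.\ termination at a critical point in the sense of \eqref{crit_point}.

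\emph{Accumulation points.}
If the iteration does not terminate, \eqref{bed_1} never holds and $\{F(A^{(r)})\}_r$ is strictly decreasing throughout; moreover, by the last assertion of Theorem~\ref{thm:conv_cond_g} (applicable because $-F \in C^1(\mathrm{int}(\mathcal D))$), every accumulation point $\hat A$ fulfils \eqref{cond_min}, that is $\langle v - \hat A, C_{\hat A}\hat A\rangle \le 0$ for all $v \in \mathcal{C}_\varepsilon$. As $\hat A \in \mathcal{C}_\varepsilon$ (closedness of $\mathcal{C}_\varepsilon$) with $\mathrm{dist}(\hat A,\mathcal A) \ge \delta > \varepsilon$, the set $\mathcal{C}_\varepsilon$ contains a relative neighbourhood of $\hat A$ in $S_{d,K}$; testing \eqref{cond_min} along smooth curves in $S_{d,K}$ through $\hat A$, for both signs of the curve parameter, forces $\langle H, C_{\hat A}\hat A\rangle = 0$ for every $H \in T_{\hat A}S_{d,K}$. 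Hence $C_{\hat A}\hat A$ is Frobenius-orthogonal to $T_{\hat A}S_{d,K}$, so its tangential component $P^{\smallperp}_{\hat A}C_{\hat A}\hat A$, which by Lemma~\ref{lem:derivatives_E_F} equals $-\nabla_{\hat A}F(\hat A)$, vanishes, i.e.\ $\hat A$ is a critical point. The only genuinely delicate point is the first step: exhibiting a single $\varepsilon$ that simultaneously keeps all iterates inside a compact subset of $\mathrm{int}(\dom(-F))$ and makes the update constrained to $\mathcal{C}_\varepsilon$ agree with the unconstrained polar projection \eqref{cond_grad=ding}; the remaining arguments are Theorem~\ref{thm:conv_cond_g} together with elementary polar-decomposition linear algebra.
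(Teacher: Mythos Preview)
Your proof is correct and follows the same overall strategy as the paper --- apply Theorem~\ref{thm:conv_cond_g} to $f=-F$ on a compact $\mathcal{C}_\varepsilon \subset \mathrm{int}(\mathcal D)$, then translate the abstract termination and accumulation-point conclusions into criticality statements. The one substantive difference is in how you deduce $A^{(r+1)}=A^{(r)}$ from \eqref{bed_1}: you observe that \eqref{bed_1} says $A^{(r)}$ attains the same value of $\langle\,\cdot\,,C_{A^{(r)}}A^{(r)}\rangle$ as the maximiser $A^{(r+1)}$, and then invoke uniqueness of the polar factor (legitimate under the full-rank hypothesis implicit in \eqref{cond_grad=ding}). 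The paper instead argues directly, showing via the trace identity
\[
\|P^{\smallperp}_A C_A A\|^2 = \tr\Bigl(\bigl((A^\tT C_A^2 A)^{1/2}-A^\tT C_A A\bigr)\bigl((A^\tT C_A^2 A)^{1/2}+A^\tT C_A A\bigr)\Bigr) \le \lambda_{\max}\,\tr\bigl((A^\tT C_A^2 A)^{1/2}-A^\tT C_A A\bigr)=0
\]
that \eqref{bed_1} forces $P^{\smallperp}_A C_A A=0$. Your route is shorter and more conceptual; the paper's avoids appealing to the uniqueness of $\Pi_{S_{d,K}}$ and is self-contained linear algebra. Your treatment of the accumulation-point step (testing \eqref{cond_min} along curves in a relative $S_{d,K}$-neighbourhood of $\hat A$ inside $\mathcal{C}_\varepsilon$) is also more explicit than the paper's brief appeal to differentiability.
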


\begin{proof}
We show that the three stopping criteria are equivalent. 
Let $ A\coloneqq A^{(r)} \not \in \mathcal{A}$ and
recall that $\nabla_{A} F (A) = -P^{\smallperp}_{A} C_{A} A$.
\\
1. If $A^{(r+1)} =  A$, then $C_A A = A (A^\tT C_A^2 A)^\frac12$ and thus $P^{\smallperp}_A C_A A  =  0$.
\\
2. If $P^{\smallperp}_A C_A A  =  0$, 
then
$C_A A = A (A^\tT C_A A)$ 
and thus
$$A^{(r+1)} = C_A A \bigl(A^\tT C_A^2 A\bigr)^{-\frac12} = A (A^\tT C_A A) \left( A^\tT C_A  A (A^\tT C_A A)\right)^{-\frac12} = A.$$
Further, this implies \eqref{bed_1}.
\\
3. Assume now that \eqref{bed_1} is fulfilled. Then, we have
\begin{align}
0&= \Bigl \langle C_{A} A \bigl(  A ^\tT C_{A}^2 A \bigr)^{-\frac12} - A, C_{A} A \Bigr\rangle 
= \Bigl \langle \bigl(A^\tT C_A^2 A\bigr)^{\frac12} - A^\tT C_A A, I_K \Bigr\rangle\\
&= \tr \Bigl( \bigl(A^\tT C_A^2 A\bigr)^{\frac12} - A^\tT C_A A \Bigr).
\end{align}
On the other hand, we have with $\tr(AB) = \tr(BA)$ that
\begin{align}
 \|P^{\smallperp}_A C_A A \|^2 
 &= \left\langle P^{\smallperp}_A C_A A,P^{\smallperp}_A C_A A\right\rangle 
 = \tr \left( A^\tT C_A^2 A - \bigl(A^\tT C_A A\bigr)^2 \right)\\
 &= \tr\left( \Bigl(\bigl(A^\tT C_A^2 A\bigr)^{\frac12} - A^\tT C_A A \Bigr) \Bigl(\bigl(A^\tT C_A^2 A\bigr)^{\frac12} + A^\tT C_A A \Bigr) \right) \\
 &\le \lambda_{\max} \, \tr \Bigl(\bigl(A^\tT C_A^2 A\bigr)^{\frac12} - A^\tT C_A A \Bigr) = 0,
\end{align}
where $\lambda_{\max}$ denotes the largest eigenvalue of the matrix $(A^\tT C_A^2 A)^{\frac12} + A^\tT C_A A$.
Hence, $P^{\smallperp}_A C_A A = 0$.\\
It remains to show that all accumulation points of infinite sequences are critical points. 
Let $\hat A\notin \mathcal{A}$ be an accumulation point of such a sequence. 
As the set of accumulation points has positive distance from $\mathcal{A}$, we can choose $\varepsilon$ small enough such that all iterates 
are in $\mathrm{int}(\mathcal{C}_\varepsilon)$ for $r$ large enough. 
Then, by Theorem \ref{thm:conv_cond_g}, the accumulation point $\hat A$ fulfills \eqref{cond_min} 
and as $E$ is differentiable in $\hat A$, this implies that $\hat A$ is a critical point.
\end{proof}

\begin{remark}
	Unfortunately, the function $-F$ has no subdifferential at the boundary of its domain and $S_{d,K}$
	touches this boundary in the anchor set.
	A remedy would be to use instead of $F$ the function
	$$
	F_\varepsilon(A) \coloneqq \sum_{i=1}^N \sqrt{\|y_i\|^2 - \|A^\tT y_i\|^2 + \varepsilon}, \quad \varepsilon > 0.
	$$
	By the proof of Lemma \ref{propF}, we conclude that $-F_\varepsilon$ is convex on an open set which contains $\mathcal{D}$ and therefore also $S_{d,K}$. 
	Thus, accumulation points of the sequence produced by the conditional gradient algorithm are critical points by Theorem~\ref{thm:conv_cond_g}.
	Another idea consists of switching to a function with summands $\varphi(\sqrt{\|y_i\|^2 - \|A^\tT y_i\|^2})$, 
	where $\varphi$ is e.g.~the Huber function as proposed in \cite{ding2006r}.
	This approach is not pursued any further, 
	since we are more interested in finding an algorithm for the original function without an additional parameter.
\end{remark}

\paragraph{Gradient Descent Algorithm on $G_{d,K}$.}
By \eqref{crit_point},
a matrix $A \in S_{d,K} \setminus \mathcal{A}$ 
is a critical point of $E$, resp.~$F$, on $S_{d,K}$ if and only if $ P^{\smallperp}_A C_A A = 0$.
This can be rewritten as
\begin{align}
A (A^\tT C_A A) &= C_A A,\label{eq:fixedpoint_compact}\\
A &= C_A A S_A^{-1} \qquad  \mbox{with} \qquad S_A \coloneqq A^\tT C_A A,\\
A &= A + P^{\smallperp}_A C_A A S_A^{-1},\label{eq:fixed_point_eq}
\end{align}
where $S_A \in \mathbb R^{K,K}$ is assumed to be invertible which is the case 
under the reasonable assumption that 
${\mathcal R}(A)\subset \mathrm{span}(Y)$ and $\mathrm{dim}(\mathrm{span}(Y))\geq K$,
where $Y\coloneqq(y_1 \, \ldots \, y_N)$.

\begin{remark}
Note that $ -\nabla_A E(A)S_A^{-1} = P^{\smallperp}_A C_A A S_A^{-1} \in T_{[A]} G_{d,K} \subset T_A S_{d,K}$.
Let $S_A = Q\Lambda Q^\tT$ be an eigenvalue decomposition with eigenvalues $\lambda_1\geq\ldots\geq \lambda_K>0$ of $S_A$ in the diagonal matrix $\Lambda\in\RR^{K,K}$. 
Plugging this into \eqref{eq:fixedpoint_compact}, multiplying with $Q$ from the right and substituting $\tilde A \coloneqq AQ\in S_{d,K}$ we get
\[
	\tilde A \Lambda = C_{\tilde A}\tilde A,
\]
so that the columns of $\tilde A$ are eigenvectors of 
$C_{\tilde A} = \sum_{i=1}^N \frac{1}{\|P^{\smallperp}_{\tilde A}y_i\|}y_iy_i^\tT$ with eigenvalues $\lambda_k>0$.
\\
Using the same relations in \eqref{eq:fixed_point_eq}, we arrive at
\[
	\tilde A = \tilde A + P^{\smallperp}_{\tilde A} C_{\tilde A} \tilde A \Lambda^{-1} = \tilde A + \nabla_{\tilde A}E(\tilde A)\Lambda^{-1},
\]
which allows the interpretation of $\frac {1}{\lambda_k}$, $k=1,\ldots,K$ as columns-wise step sizes in the gradient descent iteration \eqref{eq:scheme_grad}.
\end{remark}

Together with Lemma \ref{lem:orth_proj}, the fixed point equation \eqref{eq:fixed_point_eq} gives rise to the following  descent scheme on $S_{d,K}$, resp.~$G_{d,K}$:
\begin{align} \label{eq:scheme_grad}
 A^{(r+1)} &\coloneqq \Pi_{S_{d,K}} \left(C_{A^{(r)}}  A^{(r)} S_{A^{(r)}}^{-1} \right)
= C_{A^{(r)}} A^{(r)} S_{A^{(r)}}^{-1} \left( S_{A^{(r)}}^{-1}(A^{(r)})^\tT C_{A^{(r)}}^2 A^{(r)} S_{A^{(r)}}^{-1} \right)^{-\frac 12}.
\end{align}
Note the strong connection of this iterative scheme to the Weiszfeld algorithm \cite{BS2015,We37,NNS2017} 
and majorize-minimize strategies \cite{CIS2011}.

By the following lemma, the gradient descent iteration \eqref{eq:scheme_grad} coincides with those of the conditional gradient algorithm \eqref{cond_grad=ding} on the Grassmannian $G_{d,K}$.

\begin{lemma}\label{lem:scheme_equiv}
	For the same input matrix $A^{(0)}$, 
	the iterates generated by the schemes \eqref{eq:scheme_grad} and \eqref{cond_grad=ding}
	coincide on $G_{d,K}$, i.e., they span the same subspace.
\end{lemma}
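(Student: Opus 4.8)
The plan is to show that both iteration schemes produce matrices that differ only by an orthogonal factor on the right, hence represent the same point of the Grassmannian $G_{d,K}$. Fix $A \coloneqq A^{(r)} \notin \mathcal A$ and write $C \coloneqq C_A$ and $S \coloneqq S_A = A^\tT C A$, which we may assume to be symmetric positive definite and invertible. The conditional-gradient update is $A^{(r+1)}_{\mathrm{cg}} = \polar(CA)$ and the gradient-descent update is $A^{(r+1)}_{\mathrm{gd}} = \polar(CAS^{-1})$. So the statement reduces to proving that $\polar(CA)$ and $\polar(CAS^{-1})$ span the same column space, equivalently that there is $Q \in O(K)$ with $\polar(CAS^{-1}) = \polar(CA)\,Q$.

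First I would record the elementary fact that for any $M \in \mathbb R^{d,K}$ of full rank and any invertible $R \in \mathbb R^{K,K}$, the range $\mathcal R(MR) = \mathcal R(M)$, and since $\polar(M)$ has the same column space as $M$ (because $\polar(M) = M(M^\tT M)^{-1/2}$ by Lemma~\ref{lem:orth_proj}), it follows that $\polar(MR)$ and $\polar(M)$ span the same subspace. Indeed, $\polar(MR) = MR\bigl(R^\tT M^\tT M R\bigr)^{-1/2}$, and this is a matrix in $S_{d,K}$ whose columns lie in $\mathcal R(M)$; since any two elements of $S_{d,K}$ with the same range differ by a right orthogonal factor, we get $\polar(MR) = \polar(M)\,Q$ for some $Q \in O(K)$. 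Applying this with $M = CA$ and $R = S^{-1}$ (which is invertible by assumption, and $CA$ has full rank under the standing assumptions $\mathcal R(A)\subset\mathrm{span}(Y)$, $\dim\mathrm{span}(Y)\ge K$ that guarantee $S$ invertible) immediately yields that $A^{(r+1)}_{\mathrm{gd}}$ and $A^{(r+1)}_{\mathrm{cg}}$ span the same subspace.

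To finish, I would run this as an induction on $r$: assuming $A^{(r)}_{\mathrm{gd}}$ and $A^{(r)}_{\mathrm{cg}}$ represent the same point $[A^{(r)}]$ of $G_{d,K}$, say $A^{(r)}_{\mathrm{gd}} = A^{(r)}_{\mathrm{cg}}\,Q_r$ with $Q_r \in O(K)$, I need that the updates computed from these two representatives again agree on $G_{d,K}$. This uses the right-$O(K)$-equivariance of both schemes: replacing $A$ by $AQ$ with $Q\in O(K)$ sends $C_{AQ} = C_A$ (since $P^\perp_{AQ} = P^\perp_A$, the projection depending only on the subspace), sends $S_{AQ} = Q^\tT S_A Q$, and hence sends the gradient-descent update to $\polar\bigl(C_A A Q\, Q^\tT S_A^{-1} Q\bigr) = \polar(C_A A S_A^{-1} Q)$, which spans the same subspace as $\polar(C_A A S_A^{-1})$ by the full-rank/right-factor lemma again; likewise for the conditional-gradient update $\polar(C_A A Q)$. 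Combining this equivariance with the base case and the one-step equivalence established above closes the induction.

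The main obstacle is bookkeeping rather than depth: one must be careful that $CA$ genuinely has full column rank so that $\polar$ admits the formula $M(M^\tT M)^{-1/2}$ and so that the "same range implies right-orthogonal factor" statement applies — this is exactly where the standing assumption on $\mathcal R(A) \subset \mathrm{span}(Y)$ and $\dim\mathrm{span}(Y)\ge K$, which makes $S_A$ positive definite, is needed, and it should be stated explicitly. A secondary point to handle cleanly is that the claim is only about subspaces (points of $G_{d,K}$), so throughout one works modulo right multiplication by $O(K)$ and never claims the Stiefel representatives themselves coincide; making the equivariance of $C_A$, $S_A$, and $\polar$ under $A \mapsto AQ$ precise is the one place where a line or two of genuine verification is required.
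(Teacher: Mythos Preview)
Your proposal is correct and follows essentially the same approach as the paper: the core observation in both is that the two updates $\polar(C_A A)$ and $\polar(C_A A S_A^{-1})$ differ only by a right invertible factor before projection, hence span the same subspace, and the equivariance $C_{AQ}=C_A$ propagates this through the iteration. The paper compresses all of this into two sentences, whereas you spell out the full-rank lemma, the polar formula, and the induction step explicitly; your added care about the standing assumption ensuring $S_A$ is invertible and $CA$ has full rank is a genuine clarification that the paper leaves implicit.
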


\begin{proof}
	Since $C_{AQ} = C_A$ for $Q\in O(K)$, we observe that the matrices produced by the update schemes \eqref{eq:scheme_grad}
	and \eqref{cond_grad=ding} span the same subspace as they differ only by a multiplication with an invertible matrix from the right. 
Since both iterates are in the Stiefel manifold, they can only differ by orthogonal matrix, i.e., 
$\Pi_{S_{d,K}} ( C_{A^{(r)}} A^{(r)} S_{A^{(r)}}^{-1} ) = \Pi_{ S_{d,K}} (C_{A^{(r)}} A^{(r)} ) Q$ for some $Q\in O(K)$.
\end{proof}

The following lemma quantizes the relation from Corollary \ref{cor:cond_grad} that $\{ E(A^{(r)})\}_r$ 
is decreasing.

\begin{lemma}\label{lemma:function_decrease}
	If  $A^{(r)} \notin \mathcal{A}$, then $A^{(r+1)}$ generated by \eqref{eq:scheme_grad} satisfies
	\[
	E\bigl(A^{(r+1)}\bigr) - E\bigl(A^{(r)}\bigr) \leq -  \sum_{i=1}^N \frac{\bigl\|A^{(r+1)}( A^{(r+1)})^\tT y_i - A^{(r)}(A^{(r)})^\tT y_i\bigr\|^2}{2\|P^{\smallperp}_{A^{(r)}}y_i\|}.
	\]
\end{lemma}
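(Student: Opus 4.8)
The plan is to exploit the concavity of $F$ established in Lemma~\ref{propF} together with the fact that the update \eqref{eq:scheme_grad} is, on the Grassmannian, a majorize–minimize step. The key observation is that $-F_i$ is convex (Lemma~\ref{propF}(2)) with gradient $\nabla(-F_i)(A) = \frac{1}{\|P^{\smallperp}_A y_i\|}\, y_i y_i^\tT A$ at points of $S_{d,K}\setminus\mathcal{A}$, so for any two matrices $A,B$ in the domain the convexity inequality gives a linear lower bound on $-F_i(B)$ in terms of $-F_i(A)$ and this gradient. Rewriting in terms of $F_i$ itself (flipping the sign) yields an \emph{upper} bound on $F_i(B)$ of the form
\[
	F_i(B) \le F_i(A) - \frac{1}{\|P^{\smallperp}_A y_i\|}\bigl\langle y_i y_i^\tT A,\, B-A\bigr\rangle .
\]
This is the first step: record this "majorizing" inequality for each $i$, sum over $i$, and recall that $E$ and $F$ agree on $S_{d,K}$ so the left side is $E(B)$ when $B\in S_{d,K}$.

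Second, I would massage the inner product term. Using $A^\tT A = B^\tT B = I_K$ and the identity $\langle y_i y_i^\tT A, B-A\rangle = \langle A A^\tT y_i, y_i\rangle - \langle B A^\tT y_i, y_i\rangle$-type manipulations, one rewrites $\langle y_i y_i^\tT A, A-B\rangle$ so that it becomes (half of) a squared norm plus a nonnegative remainder. Concretely, with $P_A := A A^\tT$ and $P_B := B B^\tT$ the orthogonal projections, the polarization identity gives
\[
	\|P_B y_i - P_A y_i\|^2 = \langle P_B y_i, y_i\rangle + \langle P_A y_i, y_i\rangle - 2\langle P_A y_i, P_B y_i\rangle ,
\]
and the idempotency/symmetry of the projections lets one identify $\langle y_i y_i^\tT A, A-B\rangle$ (which, since we may choose the Stiefel representative of $[B]$ freely by Lemma~\ref{lem:scheme_equiv}, can be taken to equal $\langle P_A y_i,y_i\rangle - \langle B B^\tT y_i, A A^\tT y_i\rangle$ in the relevant gauge) with $\tfrac12\|P_B y_i - P_A y_i\|^2$ plus something nonnegative. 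Dividing by $\|P^{\smallperp}_A y_i\|$ and summing produces exactly the claimed bound.

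Third, one must verify that the choice $B = A^{(r+1)}$ produced by \eqref{eq:scheme_grad} is legitimate in the above, i.e.\ that plugging this specific $B$ into the summed majorizing inequality is what drives $E$ down. Here the point is that $A^{(r+1)}$ is, up to an $O(K)$ factor, the maximizer of $\langle U, C_{A^{(r)}} A^{(r)}\rangle$ over $S_{d,K}$ (Lemma~\ref{lem:scheme_equiv} and \eqref{cond_grad=ding}), so $\langle A^{(r+1)}, C_{A^{(r)}} A^{(r)}\rangle \ge \langle A^{(r)}, C_{A^{(r)}} A^{(r)}\rangle$, which is precisely the sign needed to keep the linear correction term nonpositive after summation; this also fixes the correct Stiefel gauge so the projection identity in step two applies cleanly.

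The main obstacle I anticipate is the bookkeeping in step two: turning the raw inner-product term $\sum_i \frac{1}{\|P^{\smallperp}_{A^{(r)}}y_i\|}\langle y_i y_i^\tT A^{(r)}, A^{(r)}-A^{(r+1)}\rangle$ into the clean sum of squared differences of the projection matrices, with the correct factor of $\tfrac12$ and with the discarded remainder genuinely nonnegative. This requires being careful that $\|A^\tT y_i\|^2 = \langle A A^\tT y_i, y_i\rangle$ for Stiefel matrices and handling the fact that the natural object is $P_A y_i = A A^\tT y_i$ rather than $A^\tT y_i$ itself; using the Grassmannian-invariant quantity $\|P_{A^{(r+1)}}y_i - P_{A^{(r)}}y_i\|$ on the right-hand side (which is gauge-free) is what makes the statement well posed, and matching the gauge-dependent left-hand computation to it is the delicate part.
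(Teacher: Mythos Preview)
Your step 1 is fine, and step 3 correctly observes that the update maximizes the linear functional. The gap is in step 2, and it is not just bookkeeping: the concavity bound you obtain from Lemma~\ref{propF} is strictly \emph{weaker} than the inequality you are asked to prove. Concavity of $F$ gives
\[
E(B)-E(A)\le \langle C_A A,\, A-B\rangle ,
\]
and you would then need $\langle C_A A,\, A-B\rangle\le -\tfrac12\sum_i \|P_B y_i - P_A y_i\|^2/\|P^{\smallperp}_A y_i\|$. But already for $N=1$, $d=2$, $K=1$, $y_1=(0,1)^\tT$ and $A=(1/\sqrt2,1/\sqrt2)^\tT$ one computes $B=(0,1)^\tT$, the concavity bound equals $1/\sqrt2-1\approx-0.293$, while the target right-hand side equals $-1/(2\sqrt2)\approx-0.354$; so your bound lies \emph{above} the target, and no ``nonnegative remainder'' can be discarded. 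The gauge-fixing idea cannot rescue this: the Ding gauge already maximizes $\langle C_A A,B\rangle$ over all representatives of $[B]$, hence gives the most negative possible value of your concavity bound, and even that is insufficient.

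The paper's argument differs in a crucial way. Instead of matrix concavity of $F$, it uses the scalar inequality $x-y\le (x^2-y^2)/(2y)$ on each $\|P^{\smallperp}_{\tilde A}y_i\|-\|P^{\smallperp}_A y_i\|$, which yields the gauge-invariant intermediate bound $\tfrac12\sum_i(\|A^\tT y_i\|^2-\|\tilde A^\tT y_i\|^2)/\|P^{\smallperp}_A y_i\|$. A Euclidean identity then splits this into the desired squared-difference term plus a cross term $\tr(C_A A A^\tT P^{\smallperp}_{\tilde A})$, and the decisive point is that this cross term \emph{vanishes exactly}, by an explicit computation using the closed form $\tilde A\tilde A^\tT=C_A A(A^\tT C_A^2 A)^{-1}A^\tT C_A$. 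That algebraic identity---specific to this update, not a generic inequality---is what your sketch is missing.
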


\begin{proof}
	In order to shorten notation, we denote $\Aplus = A^{(r+1)}$ and $A = A^{(r)}$. For $x\geq 0, y>0$ it holds $x-y\leq \frac{x^2-y^2}{2y}$ so that
	\begin{align}
	E\bigl(\Aplus\bigr) - E\bigl(A\bigr)
	&= \sum_{i=1}^N \bigl\|P^{\smallperp}_{\Aplus}y_i\bigr\| - \bigl\|P^{\smallperp}_{A}y_i\bigr\| \leq \sum_{i=1}^N \frac{\bigl\|P^{\smallperp}_{\Aplus}y_i\bigr\|^2 - \bigl\|P^{\smallperp}_{A}y_i\bigr\|^2}{2\bigl\|P^{\smallperp}_{A}y_i\bigr\|}\\
	&= \sum_{i=1}^N \frac{\bigl\|\Aplus \Aplus^\tT y_i - y_i\bigr\|^2 - \bigl\|A A^\tT y_i - y_i\bigr\|^2}{2\bigl\|P^{\smallperp}_{A}y_i\bigr\|}.
	\end{align}
	Using $\|u-v\|^2 - \|w-v\|^2 = 2\langle u-w,u-v \rangle - \|u-w\|^2$, this can be rewritten as
	\begin{align}
	E\bigl(\Aplus\bigr) - E\bigl(A\bigr)
	&\leq \sum_{i=1}^N \frac{1}{\|P^{\smallperp}_{A}y_i\|}\left\langle \Aplus \Aplus^\tT y_i - A A^\tT y_i, \Aplus \Aplus^\tT y_i -y_i \right\rangle -  \sum_{i=1}^N \frac{\bigl\| \Aplus \Aplus^\tT y_i - A A^\tT y_i\bigr\|^2}{2\|P^{\smallperp}_{A}y_i\|}\\
	&= \sum_{i=1}^N \frac{\bigl\langle A A^\tT y_i, P^{\smallperp}_{\Aplus}y_i \bigr\rangle}{\|P^{\smallperp}_{A}y_i\|} -  \sum_{i=1}^N \frac{\bigl\|\Aplus \Aplus^\tT y_i - A A^\tT y_i\bigr\|^2}{2\|P^{\smallperp}_{A}y_i\|}.
	\label{eq:wf_proof_mid}
	\end{align}
	The first sum can be simplified to
	\begin{align*}
	\sum_{i=1}^N \frac{\bigl\langle A A^\tT y_i, P^{\smallperp}_{\Aplus}y_i \bigr\rangle}{\|P^{\smallperp}_{A}y_i\|} 
	&=
	\tr\biggl( \sum_{i=1}^N \frac{y_iy_i^\tT}{\|P^{\smallperp}_{A}y_i\|} A A^\tT P^{\smallperp}_{\Aplus} \biggr) = \tr\Big(C_{A} A A^\tT P^{\smallperp}_{\Aplus} \Big) 
	= \tr\Big(C_{A} A A^\tT \bigl(I-\Aplus \Aplus^\tT\bigr) \Big)\\
	&= \tr\left(C_{A} A A^\tT \left( I-C_{A} A \big(A^\tT C_{A}^2 A\big)^{-1}A^\tT C_{A} \right) \right)\\
	&=\tr\left(A^\tT\left( C_{A} A - C_{A} A \big(A^\tT C_{A}^2 A\big)^{-1} A^\tT C_{A}^2 A\right)\right) = 0,
	\end{align*}
	so that
	\[
	E\bigl(\Aplus\bigr) - E\bigl(A\bigr) \leq -  \sum_{i=1}^N \frac{\bigl\|\Aplus \Aplus^\tT y_i - A A^\tT y_i\bigr\|^2}{2\|P^{\smallperp}_{A}y_i\|}.
	\]
\end{proof}

\section{Convergence Analysis}\label{sec:convergence}
So far Corollary \ref{cor:cond_grad} only ensures convergence of a \emph{subsequence} of the iterates to a critical point under a restrictive  assumption on the anchor directions.
In this section, we prove global convergence of the whole sequence of iterates generated by algorithm \eqref{eq:scheme_grad}
on the Stiefel manifold (and thereby on the Grassmannian) under mild assumptions which are summarized at the end of this section.

The important observation is that both $E$ and $F$ are semi-algebraic functions.
Such functions are typical examples of so-called \emph{Kurdyka-\L ojasiewicz} (KL)  
\emph{functions} \cite{ABRS2010,Kur1998,Lo1963}. A function $f\colon \RR^d \to \RR \cup \{+\infty\}$ with Fr\'echet limiting subdifferential $\partial f$, see \cite{mord2006},
is said to have the \emph{Kurdyka–Łojasiewicz (KL) property} at $u^* \in \dom \partial f$ 
if there exist $\eta \in (0, +\infty)$, 
a neighborhood $U$ of $u^*$ and a continuous concave function $\phi\colon [0,\eta) \to \RR_{\geq 0}$ such that
\begin{enumerate}
	\item $\phi(0) = 0$,
	\item $\phi$ is $C^1$ on $(0,\eta)$,
	\item for all $s\in(0,\eta)$ it holds $\phi'(s)>0$,
	\item for all $x\in U\cup [f(u^*)< f <f(u^*) + \eta]$, 
	the Kurdyka–Łojasiewicz inequality  $\phi'(f(u)-f(u^*)) \dist(0,\partial f(u)) \geq 1$ holds true.
\end{enumerate}
A proper, lower semi-continuous (lsc) function which satisfies the KL property at each point of $\dom \partial f$ is called 
\emph{KL-function}.
For KL-functions, the following theorem \cite[Theorem 2.9]{ABSF13} holds true.

\begin{theorem}\label{thm:KL_conv}
	Let $f\colon \RR^d \to \RR \cup \{\infty\}$ be a KL function. 
	Let $\{ u^{(r)}\}_{r \in \N}$ be a sequence  which fulfills the following conditions:
	\begin{enumerate}
		\item[C1)] There exists $K_1>0$ such that $f(u^{(r+1)}) - f(u^{(r)}) \leq -K_1 \Vert u^{(r+1)} - u^{(r)} \Vert^2$ for every $r\in \N$.
		\item[C2)] There exists $K_2>0$ such that for every $r\in \N$ 
		there exists $w_{r+1} \in \partial f(u^{(r+1)})$ with $\Vert w_{r+1} \Vert \leq K_2 \Vert u^{(r+1)} - u^{(r)} \Vert$,
		where $\partial f$ denotes the Fr\'echet limiting subdifferential of $f$ \cite{mord2006}.
		\item[C3)] There exists a convergent subsequence $\{ u^{(r_j)} \}_{j \in \N}$ with limit $\hat u$ and $f(u^{(r_j)}) \to f(\hat u)$.
	\end{enumerate}
	Then the whole sequence $\{ u^{(r)} \}_{r \in \N}$ converges to $\hat u$ 
	and $\hat u$ is a critical point of $f$ in the sense that $0 \in \partial f(\hat u)$.
	Moreover the sequence has finite length, i.e.,
	\[\sum_{r=0}^\infty \bigl\Vert u^{(r+1)} - u^{(r)} \bigr\Vert < \infty.\]
\end{theorem}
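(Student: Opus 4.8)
The plan is to run the by-now-classical Kurdyka–\L ojasiewicz descent argument, following Attouch–Bolte–Svaiter and Bolte–Sabach–Teboulle. Write $f^*\coloneqq f(\hat u)$ and $\delta_r\coloneqq\|u^{(r)}-u^{(r-1)}\|$. First I would record the elementary consequences of C1) and C3): since C1) makes $\{f(u^{(r)})\}_r$ non-increasing while C3) supplies a subsequence along which $f(u^{(r_j)})\to f^*$, the whole sequence $\{f(u^{(r)})\}_r$ decreases to $f^*$; telescoping C1) gives $K_1\sum_{r}\delta_{r+1}^2\le f(u^{(0)})-f^*<\infty$, so $\delta_r\to 0$. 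If $f(u^{(r_0)})=f^*$ for some finite $r_0$, then $f(u^{(r)})=f^*$ for all $r\ge r_0$ and C1) forces $u^{(r)}=u^{(r_0)}$ for such $r$, which settles the statement; hence I may assume $f(u^{(r)})>f^*$ for every $r$.

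The engine of the proof is a one-step inequality. Invoke the KL property of $f$ at $\hat u$ to obtain $\eta>0$, a neighborhood $U\supset B(\hat u,\rho)$, and a concave $\phi$ as in the definition, and set $\Phi_r\coloneqq\phi(f(u^{(r)})-f^*)$. For any index $r$ with $u^{(r)}\in U$ and $f^*<f(u^{(r)})<f^*+\eta$, concavity of $\phi$ gives
\[
\Phi_r-\Phi_{r+1}\;\ge\;\phi'\bigl(f(u^{(r)})-f^*\bigr)\,\bigl(f(u^{(r)})-f(u^{(r+1)})\bigr),
\]
and then the KL inequality $\phi'(f(u^{(r)})-f^*)\ge 1/\mathrm{dist}(0,\partial f(u^{(r)}))$, the bound C2) $\mathrm{dist}(0,\partial f(u^{(r)}))\le K_2\delta_r$, and the bound C1) $f(u^{(r)})-f(u^{(r+1)})\ge K_1\delta_{r+1}^2$ combine to $\delta_{r+1}^2\le\tfrac{K_2}{K_1}\,\delta_r\,(\Phi_r-\Phi_{r+1})$. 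By $\sqrt{ab}\le\tfrac12(a+b)$ this becomes the summable-increment form
\[
2\delta_{r+1}\;\le\;\delta_r+\tfrac{K_2}{K_1}\bigl(\Phi_r-\Phi_{r+1}\bigr).
\]

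Next I would run the standard trapping induction that keeps the iterates inside $U$, where alone the above estimate is legitimate. Using $\delta_r\to0$, $f(u^{(r)})\downarrow f^*$, and the convergent subsequence, pick an index $\ell$ so large that $u^{(\ell)}$ lies so close to $\hat u$, $f(u^{(\ell)})$ so close to $f^*$, and $\delta_\ell$ so small that $\|u^{(\ell)}-\hat u\|+\delta_\ell+\tfrac{K_2}{K_1}\Phi_\ell<\rho$ and $f(u^{(\ell)})<f^*+\eta$. If $u^{(\ell)},\dots,u^{(r)}\in B(\hat u,\rho)$, summing the one-step inequality from $\ell$ to $r$ and telescoping the $\Phi$-differences yields $\sum_{k=\ell+1}^{r+1}\delta_k\le\delta_\ell+\tfrac{K_2}{K_1}\Phi_\ell$, whence $\|u^{(r+1)}-\hat u\|\le\|u^{(\ell)}-\hat u\|+\sum_{k=\ell+1}^{r+1}\delta_k<\rho$, closing the induction. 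Therefore $\sum_r\delta_r<\infty$ (finite length), so $\{u^{(r)}\}$ is Cauchy and converges; a subsequence converges to $\hat u$, so the limit is $\hat u$. Finally $\mathrm{dist}(0,\partial f(u^{(r)}))\le K_2\delta_r\to0$ together with $u^{(r)}\to\hat u$, $f(u^{(r)})\to f(\hat u)$, and closedness of the graph of the limiting subdifferential, gives $0\in\partial f(\hat u)$.

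I expect the only genuinely delicate point to be this trapping argument: the KL inequality is only available inside $U$, so one must verify inductively that no iterate past $\ell$ escapes $B(\hat u,\rho)$ before the finite-length bound is in hand; everything else is telescoping sums and elementary inequalities.
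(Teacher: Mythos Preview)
Your argument is correct and is precisely the standard Attouch--Bolte--Svaiter / Bolte--Sabach--Teboulle proof. Note, however, that the paper does not actually prove this theorem: it is quoted verbatim from \cite[Theorem~2.9]{ABSF13} and used as a black box, so there is no ``paper's own proof'' to compare against---your write-up simply reproduces the argument of the cited reference.
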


Similar arguments as used in the proof of Theorem \ref{thm:KL_conv} lead to the next corollary, see \cite[Corollary 2.7]{ABSF13}.
\begin{corollary}\label{cor:KL_cor}
	Let $f\colon \RR^d \to \RR \cup \{+\infty\}$ be a KL function.
	Denote by $U$, $\eta$ and $\phi$ the objects appearing in the definition of the KL function.
	Let $\delta, \rho > 0$ be such that $B(u^*,\delta)\subset U$ with $\rho \in (0,\delta)$. 
	Consider a finite sequence $u^{(r)}$, $r= 0, \dots,n$, 
	which satisfies the Conditions C1 and C2 of Theorem \ref{thm:KL_conv} and additionally
	\begin{enumerate}
		\item[C4)] $f(u^*) \leq f(u^{(0)}) < f(u^*) + \eta$,
		\item[C5)] $\Vert u^* - u^{(0)} \Vert + 2\sqrt{\frac{f(u^{(0)}) -f(u^*)}{K_1}} + \frac{K_2}{K_1} \phi(f(u^{(0)}) - f(u^*)) \leq \rho$.
	\end{enumerate}
	If for all $r=0,\dots, n$ it holds 
	$$u^{(r)} \in B(u^*,\rho) \quad \Longrightarrow \quad  u^{(r+1)} \in B(u^*,\delta) \; \mathrm{and} \;  f(u^{(r+1)})\geq f(u^*),$$
	then $u^{(r)} \in B(u^*,\rho)$ for all  $r=0,\dots, n+1$.
\end{corollary}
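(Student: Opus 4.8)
The plan is to argue by induction on $r$, showing that $u^{(r)}\in B(u^*,\rho)$ for every $r=0,\dots,n+1$. The base case $u^{(0)}\in B(u^*,\rho)$ is immediate, since the three summands on the left-hand side of C5 are all nonnegative, whence $\|u^*-u^{(0)}\|\le\rho$. For the inductive step, suppose $u^{(0)},\dots,u^{(r)}\in B(u^*,\rho)$ for some $r\le n$. Applying the assumed implication at the indices $0,\dots,r$, and C4 at index $0$, we deduce that all of $u^{(0)},\dots,u^{(r+1)}$ lie in $B(u^*,\delta)\subset U$ and that $f(u^{(k)})\ge f(u^*)$ for $k=0,\dots,r+1$; since C1 makes $\{f(u^{(k)})\}_k$ nonincreasing, we also have $f(u^{(k)})-f(u^*)\le f(u^{(0)})-f(u^*)<\eta$ by C4. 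If $f(u^{(k)})=f(u^*)$ for some $k\le r$, then monotonicity forces this equality for all later indices and C1 then forces the iterates to be constant from index $k$ on, so $u^{(r+1)}=u^{(k)}\in B(u^*,\rho)$ and we are done; hence we may assume $r_k\coloneqq f(u^{(k)})-f(u^*)\in(0,\eta)$ for $k=0,\dots,r$.

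The heart of the proof is the standard KL chain estimate, carried out for $k=1,\dots,r$. Since $u^{(k)}\in U$ has function value strictly between $f(u^*)$ and $f(u^*)+\eta$, the KL inequality combined with C2 yields $\phi'(r_k)\,K_2\,\|u^{(k)}-u^{(k-1)}\|\ge\phi'(r_k)\,\dist(0,\partial f(u^{(k)}))\ge1$. Coupling the concavity bound $\phi(r_k)-\phi(r_{k+1})\ge\phi'(r_k)(r_k-r_{k+1})$ with $r_k-r_{k+1}\ge K_1\|u^{(k+1)}-u^{(k)}\|^2$ from C1, and then using $\sqrt{ab}\le\tfrac12(a+b)$, I obtain
\[
	\|u^{(k+1)}-u^{(k)}\|\le\tfrac12\|u^{(k)}-u^{(k-1)}\|+\tfrac{K_2}{2K_1}\bigl(\phi(r_k)-\phi(r_{k+1})\bigr).
\]
Summing this over $k=1,\dots,r$, telescoping the $\phi$-terms, using $\phi(r_{r+1})\ge0$, and rearranging to isolate $\sum_{k=1}^{r}\|u^{(k+1)}-u^{(k)}\|$ gives $\sum_{k=1}^{r}\|u^{(k+1)}-u^{(k)}\|\le\|u^{(1)}-u^{(0)}\|+\tfrac{K_2}{K_1}\phi(r_1)$.

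To finish, I estimate the very first increment by C1 alone: $\|u^{(1)}-u^{(0)}\|^2\le(f(u^{(0)})-f(u^{(1)}))/K_1\le r_0/K_1$. Since $\phi$ is nondecreasing and $r_1\le r_0$, the triangle inequality then yields
\[
	\|u^{(r+1)}-u^*\|\le\|u^{(0)}-u^*\|+\sum_{k=0}^{r}\|u^{(k+1)}-u^{(k)}\|\le\|u^{(0)}-u^*\|+2\sqrt{\tfrac{r_0}{K_1}}+\tfrac{K_2}{K_1}\phi(r_0),
\]
which is precisely the left-hand side of C5 and hence $\le\rho$; thus $u^{(r+1)}\in B(u^*,\rho)$, closing the induction. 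The step I expect to demand the most care is the bookkeeping rather than any single computation: the KL inequality can only be invoked at $u^{(1)},\dots,u^{(r)}$, because $u^{(0)}$ has no predecessor to supply C2, so the first increment has to be controlled separately — this is exactly why the factor $2$ in front of $\sqrt{r_0/K_1}$ appears in C5 — and one must verify throughout that every iterate entering a KL inequality genuinely lies in $U$ with function value strictly inside $(f(u^*),f(u^*)+\eta)$, which is what the inductive hypothesis, C4 and the monotonicity from C1 secure.
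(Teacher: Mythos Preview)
Your proof is correct and follows exactly the standard argument the paper points to: the paper does not supply its own proof of this corollary but simply refers to \cite[Corollary~2.7]{ABSF13}, and your induction with the KL chain estimate, the concavity-plus-C1 bound, and the separate treatment of the first increment via C1 alone is precisely that argument. The bookkeeping you flag (invoking KL only at $u^{(1)},\dots,u^{(r)}$, and verifying each such iterate lies in $U$ with function value in the open slab) is handled correctly.
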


We start our convergence analysis by showing property C1).

\begin{lemma}\label{lem:suff_decrease}
	Assume that $A^{(r)}\notin \mathcal{A}$ for all $r \ge 1$ generated by \eqref{eq:scheme_grad}. 
	Then, there exists $K_1>0$ such that
	\begin{equation}\label{eq:proof_suff_descent}
		E\bigl(A^{(r+1)}\bigr) - E\bigl(A^{(r)}\bigr) \leq -K_1 \bigl\|A^{(r+1)} - A^{(r)}\bigr\|^2.
	\end{equation}
\end{lemma}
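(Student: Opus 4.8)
The plan is to leverage Lemma~\ref{lemma:function_decrease}, which already gives
\[
	E\bigl(A^{(r+1)}\bigr) - E\bigl(A^{(r)}\bigr) \leq -  \sum_{i=1}^N \frac{\bigl\|A^{(r+1)}( A^{(r+1)})^\tT y_i - A^{(r)}(A^{(r)})^\tT y_i\bigr\|^2}{2\|P^{\smallperp}_{A^{(r)}}y_i\|},
\]
and to reduce the claim to two facts: (i) the denominators $\|P^{\smallperp}_{A^{(r)}}y_i\|$ are uniformly bounded above, and (ii) the numerators dominate $\|A^{(r+1)}-A^{(r)}\|^2$ up to a positive constant. Fact~(i) is easy: for $A\in S_{d,K}$ one has $\|P^{\smallperp}_A y_i\|\le\|y_i\|$, so $\|P^{\smallperp}_{A^{(r)}}y_i\|\le\max_i\|y_i\|=:M$. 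Hence the right-hand side is at most $-\frac{1}{2M}\sum_{i=1}^N \|A^{(r+1)}(A^{(r+1)})^\tT y_i - A^{(r)}(A^{(r)})^\tT y_i\|^2$.

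The remaining task is to bound $\|A^{(r+1)}-A^{(r)}\|^2$ by a constant times $\sum_{i=1}^N \|A^{(r+1)}(A^{(r+1)})^\tT y_i - A^{(r)}(A^{(r)})^\tT y_i\|^2$. Writing $P^{(r)} := A^{(r)}(A^{(r)})^\tT$ for the projection matrices, the numerator sum equals $\sum_i \|(P^{(r+1)}-P^{(r)})y_i\|^2 = \tr\bigl((P^{(r+1)}-P^{(r)}) YY^\tT (P^{(r+1)}-P^{(r)})\bigr)$ where $Y = (y_1\,\ldots\,y_N)$. Under the standing assumption $\mathcal{R}(A^{(r)})\subset\mathrm{span}(Y)$ with $\dim(\mathrm{span}(Y))\ge K$ (which I would invoke here, as it is already in force for the iteration to be well-defined), the columns of $P^{(r+1)}-P^{(r)}$ lie in $\mathrm{span}(Y)$, so $YY^\tT$ acts on the relevant subspace with smallest eigenvalue $\sigma_{\min}>0$ (the smallest nonzero eigenvalue of $YY^\tT$). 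Thus the numerator sum is $\ge \sigma_{\min}\,\|P^{(r+1)}-P^{(r)}\|^2$. Finally I relate $\|P^{(r+1)}-P^{(r)}\|$ to $\|A^{(r+1)}-A^{(r)}\|$: since by Lemma~\ref{lem:scheme_equiv} the two schemes agree up to right multiplication by $O(K)$, I may replace $A^{(r+1)}$ by the representative minimizing $\|A^{(r+1)}Q - A^{(r)}\|$ over $Q\in O(K)$; for such aligned representatives a standard perturbation estimate gives $\|A^{(r+1)}-A^{(r)}\|\le c_K\,\|P^{(r+1)}-P^{(r)}\|$ with $c_K$ depending only on $K$ (e.g.\ via $\|P^{(r+1)}-P^{(r)}\|^2 = 2K - 2\|(A^{(r+1)})^\tT A^{(r)}\|^2$ and $\|A^{(r+1)}-A^{(r)}\|^2 = 2K - 2\tr((A^{(r+1)})^\tT A^{(r)})$, together with the alignment that makes $(A^{(r+1)})^\tT A^{(r)}$ positive semidefinite, so that its trace controls its Frobenius norm). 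Combining, $E(A^{(r+1)})-E(A^{(r)})\le -\frac{\sigma_{\min}}{2M c_K^2}\|A^{(r+1)}-A^{(r)}\|^2$, and $K_1 := \sigma_{\min}/(2Mc_K^2)>0$ works uniformly in $r$.

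The main obstacle is the last comparison between $\|P^{(r+1)}-P^{(r)}\|$ and $\|A^{(r+1)}-A^{(r)}\|$: this inequality only holds after choosing aligned representatives, so one must be careful that the convergence statement is genuinely on the Grassmannian (or that the Stiefel iterates are normalized consistently). I would make explicit that $A^{(r+1)}-A^{(r)}$ is understood with $A^{(r+1)}$ replaced by $A^{(r+1)}Q_r$ for the optimal $Q_r\in O(K)$, which is harmless for the KL argument of Theorem~\ref{thm:KL_conv} since that argument lives on the quotient; alternatively, one notes that the scheme \eqref{eq:scheme_grad} as written produces a specific representative and checks that consecutive iterates are automatically close to aligned. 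A secondary point to verify is positivity of $\sigma_{\min}$ on the subspace actually containing the iterates — this is exactly where the assumption $\dim(\mathrm{span}(Y))\ge K$ and $\mathcal R(A^{(r)})\subset\mathrm{span}(Y)$ enter, and it should be stated as part of the standing hypotheses.
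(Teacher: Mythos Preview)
Your outline matches the paper's almost step for step: start from Lemma~\ref{lemma:function_decrease}, bound the denominators by $\max_i\|y_i\|$, then show that $\sum_i\|(P^{(r+1)}-P^{(r)})y_i\|^2$ controls $\|P^{(r+1)}-P^{(r)}\|^2$, and finally compare $\|P^{(r+1)}-P^{(r)}\|^2$ with $\|A^{(r+1)}-A^{(r)}\|^2$. Your step (ii) via the smallest nonzero eigenvalue of $YY^\tT$ is in fact a cleaner variant of the paper's argument (which expands the maximizing unit vector in a basis of data points); both use that $\mathcal R(A^{(r)})\subset\mathrm{span}(Y)$.

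The genuine gap is the last comparison. You correctly flag that $\|A^{(r+1)}-A^{(r)}\|\le c_K\|P^{(r+1)}-P^{(r)}\|$ requires $(A^{(r)})^\tT A^{(r+1)}$ to be symmetric positive semidefinite, and you offer two outs. Option (a), realigning by an optimal $Q_r\in O(K)$ at each step, is \emph{not} harmless here: Lemma~\ref{lem:suff_decrease} is stated for the concrete Stiefel iterates of \eqref{eq:scheme_grad}, and the downstream KL argument (Lemma~\ref{lem:grad_bound}, Theorem~\ref{thm:main}) uses \emph{these} iterates and $\nabla E$ in $\RR^{d,K}$; replacing $A^{(r+1)}$ by $A^{(r+1)}Q_r$ would change the sequence to which C1)--C2) are applied and would require redoing those lemmas. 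Option (b) is the right one, and the paper carries it out by a direct computation you did not do: from the explicit formula in \eqref{eq:scheme_grad} one gets
\[
(A^{(r)})^\tT A^{(r+1)} \;=\; \bigl(S_{A^{(r)}}^{-1}(A^{(r)})^\tT C_{A^{(r)}}^2 A^{(r)} S_{A^{(r)}}^{-1}\bigr)^{-1/2} \;=\; B^{1/2},
\]
which is symmetric positive definite with eigenvalues in $(0,1]$. Then your own identities $\|P^{(r+1)}-P^{(r)}\|^2=2K-2\|B^{1/2}\|^2$ and $\|A^{(r+1)}-A^{(r)}\|^2=2K-2\tr(B^{1/2})$ immediately give $\|P^{(r+1)}-P^{(r)}\|^2\ge\|A^{(r+1)}-A^{(r)}\|^2$, i.e.\ $c_K=1$. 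So the fix is simply to verify this positive-definiteness from the iteration formula rather than to realign.
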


\begin{proof}
	In order to shorten notation, we denote $\Aplus = A^{(r+1)}$ and $A = A^{(r)}$.
	By Lemma \ref{lemma:function_decrease}
	and since $\Vert P^{\smallperp}_{A} y_i \Vert \leq \Vert y_i \Vert \leq \max_{i=1,\dots,N} \Vert y_i \Vert =\vcentcolon 1/2C$, 
	we estimate
	\begin{equation}\label{eq:convthmgrad}
	E\bigl(\Aplus\bigr) - E\bigl(A\bigr) \leq - C \sum_{i=1}^N\bigl\| \Aplus \Aplus^\tT y_i - A A^\tT y_i\bigr\|^2.
	\end{equation}
	Next, we want to estimate the sum on the right hand side. To this end, note that
	\begin{align*}
	\bigl\|\Aplus \Aplus^\tT - A A^\tT\bigr\|_2
	&= 
	\max_{\|y\|=1} \bigl\|(\Aplus \Aplus^\tT - A A^\tT)y\bigr\| = \bigl\|(\Aplus \Aplus^\tT - A A^\tT)y \bigr\|,
	\end{align*}
	with some unit vector $y \in \mathcal{R}(Y)$ as $\mathcal{R}(A) \subseteq \mathcal{R}(Y)$. 
	The latter follows directly from the fact that the columns of $C_{A}$ are in $\mathcal{R}(Y)$. 
	We can choose a basis of $\mathcal{R}(Y)$ from the data points and w.l.o.g., 
	$y = \sum_{i=1}^{N_1} \alpha_i y_i$, 
	where $N_1 = \dim(\mathcal{R}(Y))$.
	Then, setting $Y_{N_1} \coloneqq (y_1 \, \ldots \, y_{N_1})$, the coefficients can be estimated by 
	$|\alpha_i| \leq \alpha^* \coloneqq \|(Y_{N_1}^\tT Y_{N_1})^{-1}Y_{N_1}^\tT\|_\infty$ for $i=1,\ldots,N_1$.
	Setting $\alpha_i = 0$ for all $i>N_1$, we obtain
	\begin{align*}
	\bigl\|\Aplus \Aplus^\tT - A A^\tT\bigr\|_2^2
	&= 
	\Bigl\|(\Aplus \Aplus^\tT - A A^\tT)\sum_{i=1}^{N} \alpha_i y_i\Bigr\|^2
	\leq 
	\biggl(\sum_{i=1}^{N}\bigl\|(\Aplus \Aplus^\tT - A A^\tT) \alpha_i y_i\bigr\|\biggr)^2 \\
	&\leq 
	N (\alpha^*)^2 \sum_{i=1}^{N}\bigl\|\Aplus \Aplus^\tT y_i - A A^\tT y_i\bigr\|^2.
	\end{align*}
	Using the equivalence of Frobenius and spectral norm, \eqref{eq:convthmgrad} now results in the estimate
	\begin{align*}
	E\bigl(\Aplus\bigr) - E\bigl(A\bigr)
	\leq - \frac{C}{2N(\alpha^*)^2} \bigl\|\Aplus \Aplus^\tT - A A^\tT\bigr\|_2^2
	\leq - \tilde C \bigl\| \Aplus \Aplus^\tT - A A^\tT\bigr\|^2.
	\end{align*}
	It remains to show that $\|\Aplus \Aplus^\tT - A A^\tT\|^2 \geq \|\Aplus - A\|^2$. 
	Since $A \in S_{d,K}$, we get
	\begin{align*}
	\bigl\|\Aplus \Aplus^\tT - A A^\tT\bigr\|^2
	= \tr(I_K) - 2\tr\bigl(A^\tT \Aplus \Aplus^\tT A\bigr)  +\tr(I_K)
	= 2\tr\big(I_K -  B \big),
	\end{align*}
	where
	\begin{align*}
	B 
	\coloneqq A^\tT\Aplus \Aplus^\tT A
	= \big(S_{A}^{-1} A^\tT C_{A}^2 A S_{A}^{-1}\big)^{-1}
	= \big(I_K + S_{A}^{-1} A^\tT C_{A} P^{\smallperp}_{A} C_{A} A S_{A}^{-1}\big)^{-1}.
	\end{align*}
	All eigenvalues of $B$ are in $(0,1)$.
	On the other hand, since $A^\tT \tilde A = B^\frac12$, it holds
	\begin{equation}\label{star}
	\bigl\|\Aplus - A\bigr\|^2 = 2 \tr(I_K) - 2\tr\bigl(A^\tT \Aplus\bigr)   = 2\tr \big(I_K - B^{\frac 12}\big).
	\end{equation}
	Finally, this implies with the smallest eigenvalue $\lambda_{\mathrm{min}}\geq 1$ of the matrix $I_K + B^{\frac 12}$ that
	\begin{align*}
	\bigl\|\Aplus \Aplus^\tT - A A^\tT\bigr\|^2
	&= 2\tr \big(I_K - B \big)
	= 2\tr \Bigl( \bigl(I_K + B^{\frac 12}\bigr)\bigl(I_K - B^{\frac 12}\bigr) \Bigr)\\
	&\geq 2\lambda_{\mathrm{min}}\,  \tr\big(I_K - B^{\frac 12} \big)
	\geq 2\tr\big(I_K - B^{\frac 12} \big)\\
	&= \bigl\| \Aplus - A\bigr\|^2.
	\end{align*}
	\end{proof}

\begin{corollary}\label{cor:grad_bound}
Assume that $A^{(r)}\notin \mathcal{A}$ for all $r \ge 1$ generated by \eqref{eq:scheme_grad}.
Then, it holds
\[
	\lim_{r\to\infty} \bigl\|A^{(r+1)} - A^{(r)}\bigr\|=0.
\]
The set of accumulation points of $\{A^{(r)}\}_r$ is compact and connected in $S_{d,K}$. 
Every accumulation point $\hat A$ which is not an anchor point is a critical point of $E$,
i.e.~$\nabla_{\hat A} E(\hat A) = 0$. The same statements hold true for the corresponding equivalence classes in $G_{d,K}$.
\end{corollary}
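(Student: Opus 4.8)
The plan is to establish the three assertions in turn, relying on Lemma~\ref{lem:suff_decrease} as the engine. First, from \eqref{eq:proof_suff_descent} the sequence $\{E(A^{(r)})\}_r$ is nonincreasing; since $E \geq 0$ it converges, hence $E(A^{(r+1)}) - E(A^{(r)}) \to 0$. Combining this with $E(A^{(r+1)}) - E(A^{(r)}) \leq -K_1 \|A^{(r+1)} - A^{(r)}\|^2 \leq 0$ forces $\|A^{(r+1)} - A^{(r)}\| \to 0$. This is the easy part and needs no more than the monotone convergence theorem.

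Next I would treat the structure of the accumulation point set $\Omega$. Since $\{A^{(r)}\}_r$ lives in the compact Stiefel manifold $S_{d,K}$, $\Omega$ is nonempty and closed (hence compact). For connectedness I would invoke the classical Ostrowski-type argument: any bounded sequence whose consecutive increments tend to zero has a compact \emph{connected} set of accumulation points. Concretely, if $\Omega$ were disconnected we could separate it into two nonempty compact pieces $\Omega_1, \Omega_2$ at positive distance $3\eta$; the iterates are eventually within $\eta$ of $\Omega = \Omega_1 \cup \Omega_2$, and since $\|A^{(r+1)} - A^{(r)}\| \to 0 < \eta$ the iterates cannot jump between the $\eta$-neighborhoods of $\Omega_1$ and $\Omega_2$, so they are eventually trapped near only one of them, contradicting that both contain accumulation points. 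This passes through without serious difficulty.

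For the third claim, let $\hat A \in \Omega$ with $\hat A \notin \mathcal{A}$, realized as the limit of a subsequence $A^{(r_j)} \to \hat A$. Because $\hat A \notin \mathcal{A}$, the quantities $\|P^{\smallperp}_{\hat A} y_i\|$ are all strictly positive, so $C_A$ and the map $A \mapsto \Pi_{S_{d,K}}(C_A A S_A^{-1})$ given by \eqref{eq:scheme_grad} are continuous in a neighborhood of $\hat A$ (here I also use that $S_A = A^\tT C_A A$ stays invertible near $\hat A$ under the standing rank assumption $\mathcal{R}(A) \subset \mathrm{span}(Y)$, $\dim\mathrm{span}(Y) \geq K$, which is preserved along the iteration). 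Hence $A^{(r_j+1)} = \Pi_{S_{d,K}}(C_{A^{(r_j)}} A^{(r_j)} S_{A^{(r_j)}}^{-1})$ converges to $\Pi_{S_{d,K}}(C_{\hat A} \hat A S_{\hat A}^{-1})$. On the other hand, from the first part $\|A^{(r_j+1)} - A^{(r_j)}\| \to 0$, so $A^{(r_j+1)} \to \hat A$ as well. Equating the two limits gives $\hat A = \Pi_{S_{d,K}}(C_{\hat A} \hat A S_{\hat A}^{-1})$, i.e.\ $\hat A$ is a fixed point of \eqref{eq:scheme_grad}; by the fixed-point characterization \eqref{eq:fixedpoint_compact}--\eqref{eq:fixed_point_eq} and \eqref{crit_point} this is exactly $\nabla_{\hat A} E(\hat A) = -P^{\smallperp}_{\hat A} C_{\hat A} \hat A = 0$. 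Finally, all three statements descend to $G_{d,K}$ because the Riemannian projection $\pi\colon S_{d,K} \to G_{d,K}$ is continuous, so it maps accumulation points to accumulation points and preserves compactness and connectedness, and by Lemma~\ref{lem:derivatives_E_F} (and the remark after it) a critical point of $E$ on $S_{d,K}$ corresponds to a critical point of $\tilde E$ on the Grassmannian.

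The main obstacle I anticipate is the continuity/fixed-point step for non-anchor accumulation points: one must make sure that \emph{both} $C_A$ and $S_A^{-1}$ behave continuously at $\hat A$ — the former needs $\hat A \notin \mathcal{A}$ (already assumed), and the latter needs $S_{\hat A}$ invertible, which is where the standing assumption $\dim\mathrm{span}(Y) \geq K$ and $\mathcal{R}(A^{(r)}) \subset \mathrm{span}(Y)$ must be verified to propagate along the iteration. Everything else (monotone convergence for C1, the Ostrowski argument for connectedness, and the descent to $G_{d,K}$) is routine.
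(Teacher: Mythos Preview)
Your proposal is correct and follows essentially the same route as the paper: sufficient decrease from Lemma~\ref{lem:suff_decrease} to get $\|A^{(r+1)}-A^{(r)}\|\to 0$, Ostrowski's theorem for compact connectedness of the accumulation set, and continuity of the update map $T$ near non-anchor points to identify $\hat A$ as a fixed point. The only minor difference is in the last step: you conclude $T(\hat A)=\hat A$ directly by comparing the two limits $A^{(r_j+1)}\to T(\hat A)$ and $A^{(r_j+1)}\to \hat A$, whereas the paper instead compares the $E$-values $E(\hat A)=\hat E=E(T(\hat A))$ and then invokes Corollary~\ref{cor:cond_grad} (strict decrease unless fixed) --- your version is slightly more direct and avoids that extra citation.
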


\begin{proof}
1. Since  $\{E(A^{(r)})\}_r$ is decreasing and bounded below by zero, we know that 
$\lim_{r\to\infty} E(A^{(r)}) = \hat E$ for some $\hat E\geq 0$. 
Multiplying \eqref{eq:proof_suff_descent} by $-1$, summing and taking the limit yields
\[
	E\bigl(A^{(0)}\bigr) - \hat E \geq K_1 \sum_{r=0}^\infty \bigl\|A^{(r+1)} - A^{(r)}\bigr\|^2.
\]
Consequently, the series on the right-hand side converges and $\lim_{r\to\infty} \|A^{(r+1)} - A^{(r)}\|=0$.
Using the estimate
\begin{align}
\bigl\|A^{(r+1)}(A^{(r+1)})^\tT - A^{(r)}(A^{(r)})^\tT\bigr\|_2 
&= \frac12 \bigl\| \big(A^{(r+1)} - A^{(r)} \big) \big( (A^{(r+1)})^\tT +  (A^{(r)})^\tT \big) \\
& \quad + \big(A^{(r+1)} + A^{(r)} \big)\big( (A^{(r+1)})^\tT -  (A^{(r)})^\tT \big) \bigr\|_2\\
&\leq 
C\bigl\|A^{(r+1)} - A^{(r)}\bigr\|, \quad C > 0,\label{eq:normequiv}
\end{align}
the statement also holds on $G_{d,K}$.
\\
2.~By the theorem of Ostrowski \cite[p.~173]{ostrowski1973solutions}, 
it follows that the set of accumulation points of $\{A^{(r)}\}_r$ is compact and connected both in $S_{d,K}$ and $G_{d,K}$. 
\\
3.~Since the sequence $\{A^{(r)}\}_r$ is bounded, it has a convergent subsequence. 
Let $A^{(r_j)}$ be a subsequence converging to a non-anchor point $\hat A$ and $T$ be the update operator in \eqref{eq:scheme_grad}, i.e.,
\[
	T(A^{(r)}) = A^{(r+1)} =\Pi_{S_{d,K}} \left(C_{A^{(r)}}  A^{(r)} S_{A^{(r)}}^{-1} \right) = \Pi_{S_{d,K}} \left(A^{(r)} + P^{\smallperp}_{A^{(r)}}C_{A^{(r)}} A^{(r)} S_{A^{(r)}}^{-1} \right).
\]
Then $T$ is continuous for $A^{(r)}\notin \mathcal{A}$ which we can see as follows: 
The projection $\Pi_{S_{d,K}}$ is continuous on all full rank matrices. For $A\in S_{d,K}$,
\[
	(A + P^{\smallperp}_{A}C_{A} A S_{A}^{-1})^\tT(A + P^{\smallperp}_{A}C_{A} A S_{A}^{-1}) = I_K + S_{A}^{-1} A^\tT C_{A}(P^{\smallperp}_{A})^2C_{A} A S_{A}^{-1}
\]
has only eigenvalues larger than $1$, so that the argument of the projection has full rank. 
Furthermore, $C_A$ (and therefore $S_A$) depends continuously on $A$ except for $A\in\mathcal{A}$.\\
Using the continuity of $T$ outside ${\mathcal A}$,
we have $\lim_{j\to\infty} A^{(r_j+1)} = \lim_{j\to\infty} T(A^{(r_j)}) = T(\hat A)$. 
By continuity of $E$, this implies
\[
E(\hat A) = \lim_{j\to\infty} E\bigl(A^{(r_j)}\bigr) = \hat E = \lim_{j\to\infty} E\bigl(A^{(r_j+1)}\bigr) = E\bigl(T(\hat A)\bigr).
\]
By Corollary \ref{cor:cond_grad},  $E$ is strictly decreasing except for
$\hat A = T(\hat A)$ in which case $\nabla_{\hat A} E(\hat A) = 0$.
\end{proof}

\begin{lemma}\label{lem:grad_bound}
	Assume that the elements of the sequence $\{A^{(r)}\}_r$  are generated by \eqref{eq:scheme_grad}
	and the accumulation points do not belong to the anchor set $\mathcal{A}$.
	Then the sequence fulfills C2) in Theorem \ref{thm:KL_conv}, i.e., there exists $K_2>0$ such that
	\[
		\bigl\|\nabla E(A^{(r+1)})\bigr\| \leq K_2\bigl\|A^{(r+1)} - A^{(r)}\bigr\|.
	\]
\end{lemma}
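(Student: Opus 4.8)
The plan is to combine the closed form $\nabla E(A^{(r+1)}) = -P^{\smallperp}_{A^{(r+1)}} C_{A^{(r+1)}} A^{(r+1)}$ from Lemma~\ref{lem:derivatives_E_F} with the defining relation of the scheme \eqref{eq:scheme_grad}. First I would upgrade the hypothesis on the accumulation points to a uniform statement about the whole orbit: since $S_{d,K}$ is compact and $\mathcal A$ is closed, if some subsequence of $\{A^{(r)}\}_r$ approached $\mathcal A$ it would have a sub-subsequence converging to a point of $\mathcal A$, contradicting the assumption that no accumulation point lies in $\mathcal A$. Hence $\inf_r \operatorname{dist}(A^{(r)},\mathcal A) =: d_0 > 0$, and on the compact set $\{A \in S_{d,K}: \operatorname{dist}(A,\mathcal A) \ge d_0\}$ the continuous function $A \mapsto \min_i \|P^{\smallperp}_A y_i\|$ (which vanishes exactly on $\mathcal A$) is bounded below by some $c>0$. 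Consequently $\|C_{A^{(r)}}\|_2 \le M \coloneqq \tfrac1c \sum_i \|y_i\|^2$ for all $r$, and, using the Lipschitz estimate for $A \mapsto \|P^{\smallperp}_A y_i\|$ from the proof of Lemma~\ref{propE} together with the fact that $t \mapsto 1/t$ is Lipschitz on $[c,\infty)$, the map $A \mapsto C_A$ is Lipschitz on this region with some constant $L$.

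The algebraic heart is the identity $P^{\smallperp}_{A^{(r+1)}} C_{A^{(r)}} A^{(r)} = 0$. This follows directly from \eqref{eq:scheme_grad}: setting $W \coloneqq \bigl( S_{A^{(r)}}^{-1} (A^{(r)})^\tT C_{A^{(r)}}^2 A^{(r)} S_{A^{(r)}}^{-1}\bigr)^{\frac12}$, which is symmetric positive definite (it equals $(A^{(r)}{+}P^{\smallperp}_{A^{(r)}}C_{A^{(r)}}A^{(r)}S_{A^{(r)}}^{-1})^\tT(A^{(r)}{+}P^{\smallperp}_{A^{(r)}}C_{A^{(r)}}A^{(r)}S_{A^{(r)}}^{-1}) \succeq I_K$ as in the proof of Corollary~\ref{cor:grad_bound}) and hence invertible, the scheme reads $A^{(r+1)} W = C_{A^{(r)}} A^{(r)} S_{A^{(r)}}^{-1}$, so $C_{A^{(r)}} A^{(r)} = A^{(r+1)} W S_{A^{(r)}}$ and therefore $P^{\smallperp}_{A^{(r+1)}} C_{A^{(r)}} A^{(r)} = \bigl(P^{\smallperp}_{A^{(r+1)}} A^{(r+1)}\bigr) W S_{A^{(r)}} = 0$.

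Subtracting this vanishing term and splitting gives
\begin{align*}
\nabla E\bigl(A^{(r+1)}\bigr)
&= -P^{\smallperp}_{A^{(r+1)}}\bigl(C_{A^{(r+1)}} A^{(r+1)} - C_{A^{(r)}} A^{(r)}\bigr)\\
&= -P^{\smallperp}_{A^{(r+1)}}\Bigl( C_{A^{(r+1)}}\bigl(A^{(r+1)} - A^{(r)}\bigr) + \bigl(C_{A^{(r+1)}} - C_{A^{(r)}}\bigr) A^{(r)}\Bigr),
\end{align*}
so with $\|P^{\smallperp}_{A^{(r+1)}}\|_2 = 1$, $\|C_{A^{(r+1)}}\|_2 \le M$, $\|A^{(r)}\| = \sqrt K$ and the Lipschitz bound $\|C_{A^{(r+1)}} - C_{A^{(r)}}\|_2 \le L\|A^{(r+1)} - A^{(r)}\|$ we obtain
\[
\bigl\|\nabla E\bigl(A^{(r+1)}\bigr)\bigr\| \le \bigl(M + L\sqrt K\bigr) \bigl\|A^{(r+1)} - A^{(r)}\bigr\|,
\]
and $K_2 \coloneqq M + L\sqrt K$ works. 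Since $E$ is differentiable at $A^{(r+1)}\notin\mathcal A$, its Fréchet limiting subdifferential there is the singleton $\{\nabla E(A^{(r+1)})\}$, which is precisely what C2) of Theorem~\ref{thm:KL_conv} requires. I expect the main obstacle to be the first step — converting the pointwise hypothesis "accumulation points avoid $\mathcal A$" into a uniform separation of the entire sequence from $\mathcal A$, and hence into uniform boundedness and a genuine (not merely local) Lipschitz property of $A \mapsto C_A$ along the orbit; once that is secured, everything else is the short computation above.
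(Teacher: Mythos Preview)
Your argument is correct and takes a genuinely different, cleaner route than the paper.

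The paper proceeds via the triangle inequality $\|\nabla E(A^{(r+1)})\| \le \|\nabla E(A^{(r)})\| + \|\nabla E(A^{(r+1)}) - \nabla E(A^{(r)})\|$, bounds the second summand by the mean value theorem (which requires controlling the whole segment $\overline{A^{(r)}A^{(r+1)}}$ away from $\mathcal A$, so the bound is first obtained only for large $r$), and then has to show that $\|\nabla E(A^{(r)})\|/\|A^{(r+1)}-A^{(r)}\|$ stays bounded. For this last step it expands $\|A^{(r+1)}-A^{(r)}\|^2$ via \eqref{star}, uses the power series of $(I_K+X)^{-1/2}$, and controls the leading term through eigenvalue bounds on $S_{A^{(r)}}^{-1}$; it also needs Corollary~\ref{cor:grad_bound} to know $\|\nabla E(A^{(r)})\|\to 0$ so that the higher-order terms are negligible.

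Your key observation $P^{\smallperp}_{A^{(r+1)}} C_{A^{(r)}} A^{(r)} = 0$---which is immediate since $A^{(r+1)}$ and $C_{A^{(r)}}A^{(r)}$ have the same column space---bypasses all of this: it turns $\nabla E(A^{(r+1)})$ into an expression that is \emph{directly} a Lipschitz perturbation of zero, so one line of estimates with the uniform bounds on $C_A$ and its Lipschitz constant finishes the job. You also get the inequality for every $r$, with an explicit constant, rather than only asymptotically. The paper's approach is closer to the generic template for proving C2) in abstract descent methods, but in this concrete setting your argument is both shorter and more transparent.
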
 

\begin{proof}
By the assumptions and Corollary \ref{cor:grad_bound} 
the set of accumulation points has a positive distance $\varepsilon$ from $\mathcal{A}$. 
Since $\lim_{r\to\infty} \|A^{(r+1)} - A^{(r)}\|=0$, we have for $r$ large enough that all lines
$\overline{A^{(r)}A^{(r+1)}}$ connecting $A^{(r)}$ and $A^{(r+1)}$ are in a compact set $\Omega \coloneqq \overline{B(0,R)} \setminus {\mathcal A}_{\varepsilon/2}$ for some $R>0$.
Here $B(0,R)$ denotes the ball centered at $0$ with radius $R$ with respect to the Frobenius norm.
Further, $E$ is smooth on an open set containing $\Omega$ so that the mean value theorem implies
	\begin{align}
		\bigl\|\nabla E(A^{(r+1)}) - \nabla E(A^{(r)})\bigr\|  &\leq C\bigl\|A^{(r+1)} - A^{(r)}\bigr\|.       
	\end{align}
Hence, we can estimate
	\begin{align}
	\bigl\|\nabla E(A^{(r+1)})\bigr\|
	&\leq \bigl\|\nabla E(A^{(r)})\bigr\| + \bigl\|\nabla E(A^{(r+1)}) - \nabla E(A^{(r)})\bigr\|\\
	&\leq \left( \frac{\bigl\|\nabla E(A^{(r)})\bigr\|}{\bigl\|A^{(r+1)} - A^{(r)}\bigr\|} + C \right)\bigl\|A^{(r+1)} - A^{(r)}\bigr\|.\label{eq:gradbound_final}
	\end{align}
Now, \eqref{star} implies 
	\begin{align*}
	\bigl\|A^{(r+1)} - A^{(r)}\bigr\|^2
	&= 2\tr(I_K) - 2\tr\biggl(\left(I_K + S_{A^{(r)}}^{-1} (A^{(r)})^\tT C_{A^{(r)}} P^{\smallperp}_{A^{(r)}} C_{A^{(r)}} A^{(r)} S_{A^{(r)}}^{-1}\right)^{-\frac 12} \biggr)\\
	&= 2\tr(I_K) - 2\tr\biggl( \left(I_K + S_{A^{(r)}}^{-1} \nabla E(A^{(r)})^\tT \nabla E(A^{(r)}) S_{A^{(r)}}^{-1} \right)^{-\frac 12} \biggr).
	\end{align*}
For the second term, we can use the series expansion of the square root given by
	\begin{align}
	&\tr\biggl( \left(I_K + S_{A^{(r)}}^{-1} \nabla E(A^{(r)})^\tT \nabla E(A^{(r)}) S_{A^{(r)}}^{-1} \right)^{-\frac 12} \biggr)\\
	&= \tr(I_K) -\frac 12\bigl\|\nabla E(A^{(r)}) S_{A^{(r)}}^{-1}\bigr\|^2
	 + \mathcal{O}\left(\bigl\|\nabla E(A^{(r)}) S_{A^{(r)}}^{-1}\bigr\|^4\right).
	\end{align}
Plugging this in yields
	\begin{align*}
	\frac{\bigl\|A^{(r+1)} - A^{(r)}\bigr\|^2}{\bigl\|\nabla E(A^{(r)})\bigr\|^2}
	&=\frac{\bigl\|\nabla E(A^{(r)}) S_{A^{(r)}}^{-1}\bigr\|^2 - \mathcal{O}\bigl(\bigl\|\nabla E(A^{(r)}) S_{A^{(r)}}^{-1}\bigr\|^4\bigr)}{\bigl \|\nabla E(A^{(r)})\bigr\|^2}\\
	&\geq \frac{\bigl\|\nabla E(A^{(r)})\bigr\|^2 \lambda_{\mathrm{min}}\bigl(S_{A^{(r)}}^{-1}\bigr)^2 - \mathcal{O}\bigl(\bigl\|\nabla E(A^{(r)})\bigr\|^4 
	\lambda_{\mathrm{max}}(S_{A^{(r)}}^{-1})^4\bigr)}{\bigl\|\nabla E(A^{(r)})\bigr\|^2}\\
	&= \lambda_{\mathrm{min}}\bigl(S_{A^{(r)}}^{-1}\bigr)^2 - \mathcal{O}\left(\bigl\|\nabla E(A^{(r)})\bigr\|^2 \lambda_{\mathrm{max}}\bigl(S_{A^{(r)}}^{-1}\bigr)^4\right).
	\end{align*}
From Corollary \ref{cor:grad_bound} we know that at each accumulation point which is not an anchor point the gradient of $E$ is zero,
so that $\lim_{r\to\infty} \|\nabla E(A^{(r)})\| = 0$. Furthermore, both $\lambda_{\mathrm{min}}\bigl(S_{A^{(r)}}^{-1}\bigr)$ and $\lambda_{\mathrm{max}}\bigl(S_{A^{(r)}}^{-1}\bigr)$ 
depend continuously on $A^{(r)}$ on the compact set $\Omega$, so that they can be bounded by their positive minima and maxima on $\Omega$, respectively. 
Consequently, $\lim_{r\to\infty} \bigl\|\nabla E(A^{(r)})\bigr\|^2 \lambda_{\mathrm{max}}\bigl(S_{A^{(r)}}^{-1}\bigr)^4 = 0$ and we get
\[
	\frac{\bigl\|A^{(r+1)} - A^{(r)}\bigr\|^2}{\bigl\|\nabla E(A^{(r)})\bigr\|^2} 
	= \lambda_{\mathrm{min}}\bigl(S_{A^{(r)}}^{-1}\bigr)^2 - \mathcal{O}\left(\bigl\|\nabla E(A^{(r)})\bigr\|^2 \lambda_{\mathrm{max}}\bigl(S_{A^{(r)}}^{-1}\bigr)^4\right) > \tilde C
\]
for some $\tilde C>0$ and $r$ large enough. Plugging this into \eqref{eq:gradbound_final}, we get for $r$ large enough
\[
	\bigl\|\nabla E(A^{(r+1)})\bigr\| \leq \Bigl( \frac{1}{\sqrt{\tilde C}} + C \Bigr) \|A^{(r+1)} - A^{(r)}\| .
\]
\end{proof}

\begin{theorem}\label{thm:main}
	Assume that the set of iterates $\{A^{(r)}\}_r$ 
	generated by  \eqref{eq:scheme_grad} is infinite and fulfills
	$A^{(r)}\notin \mathcal{A}$ for all $r\geq 0$. 
	Suppose that there is an accumulation point which is not in $\mathcal{A}$.
	Then the whole sequence $\{A^{(r)}\}_{r}$  converges a critical point.
\end{theorem}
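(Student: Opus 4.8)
The plan is to verify that the sequence $\{A^{(r)}\}_r$ satisfies the three conditions C1), C2), C3) of Theorem~\ref{thm:KL_conv} applied to the objective $E$ on the Stiefel manifold, viewed as a semi-algebraic (hence KL) function. First I would note that $E$ is indeed a KL function: it is semi-algebraic (it is a finite sum of square roots of polynomials in the entries of $A$), hence it has the Kurdyka--\L ojasiewicz property at every point of $\dom\partial E$, where $\partial E$ denotes the Fr\'echet limiting subdifferential. Here it is convenient to work with $E$ restricted to $S_{d,K}$, or equivalently to add the indicator function $\iota_{S_{d,K}}$ to $E$, so that the iterates stay on the feasible set; since $S_{d,K}$ is a semi-algebraic set, $E + \iota_{S_{d,K}}$ is still semi-algebraic and the critical points in the sense $0\in\partial(E+\iota_{S_{d,K}})(\hat A)$ correspond exactly to the Riemannian critical points $\nabla_{\hat A}E(\hat A)=0$ away from $\mathcal A$.

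Next I would check the three conditions in turn. Condition C1) (sufficient decrease) is exactly Lemma~\ref{lem:suff_decrease}: there is $K_1>0$ with $E(A^{(r+1)})-E(A^{(r)})\le -K_1\|A^{(r+1)}-A^{(r)}\|^2$. Condition C2) (relative error bound) is exactly Lemma~\ref{lem:grad_bound}: there is $K_2>0$ with $\|\nabla E(A^{(r+1)})\|\le K_2\|A^{(r+1)}-A^{(r)}\|$; one must observe that, since the accumulation points avoid $\mathcal A$, the tail of the sequence lies in a compact region where $E$ is $C^1$ and the Fr\'echet limiting subdifferential of $E+\iota_{S_{d,K}}$ at $A^{(r+1)}$ contains the Riemannian gradient $\nabla_{A^{(r+1)}}E(A^{(r+1)})$, whose norm is controlled by $\|\nabla E(A^{(r+1)})\|$. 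Condition C3) follows because $\{A^{(r)}\}_r\subset S_{d,K}$ is bounded, so it has a convergent subsequence with some limit $\hat A$, which by hypothesis can be taken not in $\mathcal A$, and by continuity of $E$ we get $E(A^{(r_j)})\to E(\hat A)$; Corollary~\ref{cor:grad_bound} then guarantees $\hat A$ is a critical point. Applying Theorem~\ref{thm:KL_conv} yields convergence of the whole sequence to $\hat A$, together with finite length $\sum_r\|A^{(r+1)}-A^{(r)}\|<\infty$.

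One subtlety I would address explicitly is that Theorem~\ref{thm:KL_conv} is stated for a fixed KL function on $\RR^d$ with a single neighborhood $U$ and modulus $\phi$, while a priori the accumulation point is only known to be \emph{one} point of a possibly nontrivial continuum of accumulation points (Corollary~\ref{cor:grad_bound} shows this set is compact and connected). The standard way around this, following the argument underlying Theorem~\ref{thm:KL_conv} (and Corollary~\ref{cor:KL_cor}), is: pick the convergent subsequence with limit $\hat A\notin\mathcal A$; since $E$ is constant (equal to $\hat E$) on the set of accumulation points and $E(A^{(r)})\downarrow\hat E$, for $r$ large we have $E(A^{(r)})$ close to $E(\hat A)$ from above; choose $\delta$ so that $B(\hat A,\delta)$ avoids $\mathcal A$ and lies in the KL neighborhood $U$ of $\hat A$, and choose an index $r_0$ in the subsequence large enough that C4) and C5) of Corollary~\ref{cor:KL_cor} hold with $u^{(0)}=A^{(r_0)}$ and $u^*=\hat A$ (using $E(A^{(r_0)})-\hat E$ small and $\|A^{(r_0)}-\hat A\|$ small). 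A bootstrapping/induction argument via Corollary~\ref{cor:KL_cor} then keeps all subsequent iterates in $B(\hat A,\rho)$, so $\hat A$ is the unique accumulation point and the whole sequence converges to it.

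The main obstacle is exactly this last point: transferring the local KL machinery, which naturally controls a trajectory that starts near a single critical point, to our situation where we only know a subsequence converges and where $E$ may fail to be differentiable on the anchor set $\mathcal A$. The hypothesis that some accumulation point lies outside $\mathcal A$ is what makes it work, because it forces (via Corollary~\ref{cor:grad_bound}, connectedness, and the fact that $E$ is constant on the accumulation set) all accumulation points to coincide with that non-anchor critical point, so that a neighborhood of the relevant accumulation point is uniformly bounded away from $\mathcal A$ and $E$ is smooth there with the required KL estimates. Handling the interplay between ``Riemannian critical point of $E$'' and ``$0\in\partial(E+\iota_{S_{d,K}})$'' cleanly, and making sure the iterates genuinely stay on $S_{d,K}$ so that the Fr\'echet subdifferential calculus applies, is the technical crux; everything else reduces to invoking Lemmas~\ref{lem:suff_decrease} and \ref{lem:grad_bound} and Corollary~\ref{cor:grad_bound}.
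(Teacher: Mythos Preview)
Your proposal is correct and follows the same route as the paper: verify C1)--C3) for Theorem~\ref{thm:KL_conv}, and when anchor accumulation points cannot be excluded a priori, use the bootstrap of Corollary~\ref{cor:KL_cor} to trap the tail in $B(\hat A,\rho)$ for arbitrarily small $\rho$. The paper organizes this as an explicit case split (Case~1: all accumulation points non-anchor, apply Theorem~\ref{thm:KL_conv} directly; Case~2: mixed, localize C1) and C2) to a ball around $\hat A$ and run Corollary~\ref{cor:KL_cor}), which you should also make explicit since Lemma~\ref{lem:grad_bound} as stated presupposes that \emph{all} accumulation points avoid $\mathcal A$. One caution: the claim in your final paragraph that connectedness of the accumulation set together with $E$ being constant on it ``forces all accumulation points to coincide with that non-anchor critical point'' is not a valid implication on its own; it is precisely the Corollary~\ref{cor:KL_cor} bootstrap of your third paragraph that actually pins down the limit.
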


\begin{proof}
	We distinguish two cases.
	\begin{enumerate}
		\item If all accumulation points are non-anchor points, then the assertion follows by Theorem \ref{thm:KL_conv}
		together with
		Lemma \ref{lem:suff_decrease}, Corollary \ref{cor:grad_bound} and Lemma \ref{lem:grad_bound}. 
				
		\item If the set accumulation points consists of both anchor and non-anchor points we will show convergence to a non-anchor point by applying Corollary \ref{cor:KL_cor}. 
		 Let $\hat A$ be an accumulation point which is not in the anchor set $\mathcal A$, i.e., $E_i(\hat A) = \|P^{\smallperp}_{\hat A}y_i\|\neq 0$ for all $i=1,\ldots,N$. 
		 Due to the continuity of $E_i$ we can find a ball $B(\hat A, R)$ around $\hat A$ which has positive distance to all anchor points.
		Next, for all the iterates $A^{(r)} \in B( \hat A, R/2 )$ and  $r$ large enough, C1) and C2) are fulfilled by Lemma \ref{lem:suff_decrease} and  Lemma \ref{lem:grad_bound}.
		By the continuity of $E$ and $\phi$, see also the proof of \cite[Theorem~2.9]{ABSF13}, 
		we can choose a ball $B(\hat A, \delta) \subset B( \hat A, R/2 )\cap U$ 
		(where $U$ is from the definition of the KL property), $\rho \in (0,\delta)$ 
		and a starting iterate $A^{(r_0)} \in B(\hat A, \rho)$ which satisfies C4)  and C5)
		from Corollary~\ref{cor:KL_cor}.
		Since $\lim_{r \to \infty} \Vert A^{(r+1)} - A^{(r)} \Vert = 0$ 
		and $\hat A$ is an accumulation point, we can choose $r_0$ such that
		\[A^{(r)} \in B\bigl(\hat A,\rho\bigr) \implies A^{(r+1)} \in B\bigl(\hat A,\delta\bigr), \; E\bigl(A^{(r+1)}\bigr)\geq E(\hat A)\]
		for all $r \geq r_0$.
		Either all iterates after $A^{(r_0)}$ are in $B(\hat A, \rho)$
		or there is a finite sequence $A^{(r_0)},A^{(r_0+1)} ,\ldots, A^{(r_n)}$ such that
		$A^{(r_{n}+1)}$ is the first element outside $B(\hat A, \rho)$.
		But then, by Corollary~\ref{cor:KL_cor}, also the iterate $A^{(r_n+1)}$ is inside $B(\hat A,\rho)$ and hence all iterates stay in $B(\hat A,\rho)$, which is a contradiction.
		As $\rho$ can be chosen arbitrarily small, the whole sequence converges to the anchor point $\hat A$.
	\end{enumerate}
\end{proof}

We can summarize our results under the condition that no iterate is in the anchor set as follows: If the sequence generated by \eqref{eq:scheme_grad} has at least one accumulation point where $E$ is differentiable, i.e., at least one accumulation is not an anchor point, then it converges to that point on the Stiefel manifold and it is a critical point of $E$. In this case, iteration \eqref{eq:scheme_ding} by Ding et al.~converges on the Grassmannian as it coincides with \eqref{eq:scheme_grad} there. In particular, this implies local convergence near differentiable local minimizers of $E$ which are isolated on the Grassmannian. Only in the case that all accumulation points are anchor points, which means that they form a connected component and all have the same function value, we cannot prove convergence of the whole sequence on the Stiefel or Grassmannian manifold.
In the next section we give partial results for the cases which were not fully treated up to now. We investigate an optimality condition for anchor points and a descent step in non-optimal anchor points.

\section{Investigation of Anchor Points}\label{sec:ana_anchor}
While a local minimizer of $F$ (and $E$) on $S_{d,K} \cap \mathrm{int}({\mathcal D)}$  
is characterized by the Riemannian gradient being zero, this is not possible for minimizers
lying in the anchor set $\mathcal{A}$, since $E$ is not differentiable and the subdifferential of $-F$  is empty there.

To formulate optimality conditions for matrices in the anchor set, we use the definition of one-sided directional derivatives.
The \emph{one-sided directional derivative} of a function $f\colon \mathbb R^n \rightarrow \mathbb R$, $n \in \mathbb N$, at a point $x \in \mathbb R^n$ in direction $h \in \mathbb R^n$ is defined by
\[
	\Dif f(x;h) \coloneqq \lim_{\alpha\downarrow 0}\frac{f(x+\alpha h) - f(x)}{\alpha}.
\]
The following theorem gives  necessary and sufficient conditions for (strict) local minimizers of (locally Lipschitz) continuous functions on $\RR^n$ 
using the notion of one-sided derivatives, see \cite[Theorems 2.1 \& 3.1]{Ben-Tal1985}.

\begin{theorem}\label{prop:loc_min}
	Let $f\colon \mathbb R^n \rightarrow \mathbb R$ be a function which is one-sided differentiable.
	Then the following holds true:
	\begin{enumerate}
		\item If  $\hat x \in \RR^n$ is a local minimizer of  $f$ on $\RR^n$, then
		$\Dif f(\hat x;h) \ge 0$ for all $h \in \RR^n$. 
		\item If $\Dif f(\hat x;h) >0$ for all $h\in \RR^n \setminus \{0\}$ and $f$ is locally Lipschitz continuous, then $\hat x$ is a strict local minimizer of $f$ on $\RR^n$.
	\end{enumerate}
\end{theorem}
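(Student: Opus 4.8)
The plan is to derive both statements directly from the definition of the one-sided directional derivative, treating the two parts separately but with the same underlying idea: the first-order behaviour of $f$ along rays controls local optimality.

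For part 1, I would argue by contraposition. Suppose $\hat x$ is a local minimizer, so there is a radius $r>0$ with $f(x) \ge f(\hat x)$ for all $x \in B(\hat x, r)$. Fix any direction $h \in \RR^n$; without loss of generality $h \neq 0$. For $\alpha \in (0, r/\|h\|)$ we have $\hat x + \alpha h \in B(\hat x,r)$, hence $f(\hat x + \alpha h) - f(\hat x) \ge 0$, and dividing by $\alpha > 0$ gives $\frac{f(\hat x + \alpha h) - f(\hat x)}{\alpha} \ge 0$. Letting $\alpha \downarrow 0$ and using that the limit defining $\Dif f(\hat x;h)$ exists by hypothesis, we conclude $\Dif f(\hat x;h) \ge 0$. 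This part is essentially immediate.

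For part 2, assume $\Dif f(\hat x;h) > 0$ for all $h \neq 0$ and that $f$ is locally Lipschitz near $\hat x$. The key step is to pass from positivity of the directional derivative in each fixed direction to a uniform lower bound over the unit sphere $S^{n-1}$, which is compact. I would show that the map $h \mapsto \Dif f(\hat x; h)$ is lower semi-continuous on $S^{n-1}$: using the local Lipschitz constant $L$, one has for $h, h'$ near each other and small $\alpha$ that $|f(\hat x + \alpha h) - f(\hat x + \alpha h')| \le L\alpha\|h-h'\|$, so the difference quotients differ by at most $L\|h-h'\|$, and taking $\liminf$ preserves a lower bound. By compactness of $S^{n-1}$, lower semicontinuity yields a minimum, so there exists $c > 0$ with $\Dif f(\hat x; h) \ge c$ for all $h \in S^{n-1}$. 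Then I would argue that this forces, for some small radius, $f(\hat x + \alpha h) - f(\hat x) \ge \tfrac{c}{2}\alpha$ for all unit $h$ and all small $\alpha > 0$; here the Lipschitz property is needed again to make the smallness of $\alpha$ uniform in $h$ (a pointwise statement per direction is not enough for a strict local minimum). Rescaling, every point $x = \hat x + v$ with $0 < \|v\|$ small satisfies $f(x) > f(\hat x)$, i.e.~$\hat x$ is a strict local minimizer.

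The main obstacle is the uniformity argument in part 2: converting the family of one-dimensional limits $\Dif f(\hat x; h) > 0$ into a single radius that works for every direction simultaneously. This is exactly where both compactness of $S^{n-1}$ and the local Lipschitz hypothesis enter, and it is the only nontrivial point; without Lipschitz continuity the statement is false (one can concoct a function positive-directionally-derivative in every direction at a point that is not even a local minimizer). Since the statement is attributed to \cite[Theorems 2.1 \& 3.1]{Ben-Tal1985}, I would expect the authors simply to cite it rather than reproduce this argument, but the sketch above is the content.
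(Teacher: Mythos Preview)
Your expectation is exactly right: the paper does not prove this theorem at all but simply cites \cite[Theorems~2.1 \& 3.1]{Ben-Tal1985}, adding only the remark that Lipschitz continuity in part~2 cannot be weakened to mere continuity. Your sketch of the underlying argument is sound; in particular, the Lipschitz estimate you use in part~2 actually shows that $h \mapsto \Dif f(\hat x;h)$ is $L$-Lipschitz on $S^{n-1}$ (not just lower semicontinuous), and the uniformity step can be made clean by contradiction: if $x_k = \hat x + \alpha_k h_k \to \hat x$ with $f(x_k) \le f(\hat x)$, pass to $h_k \to h^*$ and use the Lipschitz bound to conclude $\Dif f(\hat x; h^*) \le 0$.
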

Note that $\Dif f(\hat x;h) \ge 0$ for all $h\in \RR^n \setminus \{0\}$ does not imply that $\hat x$ is a local minimizer of $f$ on $\RR^n$.
The authors of \cite{Ben-Tal1985} gave an example that Lipschitz continuity in the second part  cannot be weakened to just continuity.

The theorem can be applied for complete Riemannian manifolds $\mathcal{M}$ in the following way.
For a function $f \colon \mathcal M \to \RR$, 
the point $\hat x \in \mathcal{M}$ is a local minimizer if and only if $0_{\hat x}$ 
is a local minimizer of $\tilde f_{\hat x} \colon T_{\hat x}\mathcal{M} \to \RR$ with $\tilde f_{\hat x} = f \circ \exp_{\hat x}$, 
where $\exp_{\hat x} \colon T_{\hat x}\mathcal{M} \to \mathcal{M}$ denotes the exponential map at $\hat x$.
The exponential map satisfies 
$\exp_{\hat x}(0_{\hat x}) = \hat x$ and 
$\mathrm{d} \exp_{\hat x}(0_{\hat x}) = \mathrm{Id}$, where $\mathrm{d} F$ denote the differential of a smooth mapping $F$ between manifolds,
see \cite[Section 5.4]{AMS08}.
Hence, local minimizers can be checked with the following directional derivative on manifolds
\[\Dif_{\mathcal M}f(\hat x;h) \coloneqq \Dif\tilde f_{\hat x}(0_{\hat x};h), \quad h \in T_{\hat x}\mathcal{M}.\]

Now, we want to apply Theorem~\ref{prop:loc_min} for the locally Lipschitz continuous energy function $E$.
To this end, the norm on $\mathbb R^{n,m}$ is defined by
\[
	\|B\|_{2,1} \coloneqq \sum_{i=1}^m \|b_i\|, \qquad B \coloneqq (b_1 \, \ldots \, b_m).
\]
Then, it is easy to check that the dual norm is given by
\begin{equation} \label{norm_dual}
\|B\|_{2,1}^* = \sup_{\|Z\|_{2,1} = 1} \langle Z,B \rangle_F = \max_{i=1,\ldots,m} \|b_i\| =\vcentcolon \|B\|_{2,\infty}.
\end{equation}
Further, note that for $H \in T_A S_{d,K}$ the exponential map $\exp_{A}$ satisfies
\begin{equation} \label{eq:DirectionalExp}
\od{}{\alpha}(\exp_{A}(\alpha H))\big|_{\alpha=0} = \mathrm{d}  \exp_{A} (0 \, H) \left[ \od{}{\alpha} (\alpha \, H)\big|_{\alpha=0}\right] = H.
\end{equation}
First, the one-sided derivative of $E$ at $A \in S_{d,K}$ in direction $H \in T_A S_{d,K}$ is computed.

\begin{lemma}
	The one-sided derivative of $E$ defined in \eqref{E} on $S_{d,K}$ reads for
	$A \in S_{d,K}$ and $H = A X + A_\perp Z \in T_A S_{d,K}$ as follows
	\begin{equation}\label{one-sided}
		\mathrm{D}_{S_{d,K}} E(A;H) = -\langle Z, A_\perp^\tT C_{A,\mathcal{K}} A \rangle_F +  \| Z A^\tT Y_{\mathcal{K}} \|_{2,1}, 
	\end{equation}
	where $\mathcal{K} \coloneqq \{ k \in \{1,\ldots,N\}: \|P^{\smallperp}_A y_k\| = 0 \}$, $ Y_{\mathcal{K}} \coloneqq (y_k)_{k \in \mathcal{K}}$ and
	\[
		C_{A,\mathcal{K}}\coloneqq \sum_{i=1 \atop i \not \in \mathcal{K}}^N \frac{1}{\|P^{\smallperp}_A y_i\|} y_i y_i^\tT.
	\]
\end{lemma}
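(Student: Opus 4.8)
The plan is to compute the directional derivative summand by summand, splitting the index set into the anchor indices $\mathcal{K}$ and the rest. For $i \notin \mathcal{K}$, we have $\|P^{\smallperp}_A y_i\| \neq 0$, so $E_i$ is smooth near $A$ on $S_{d,K}$ and, by \eqref{eq:DirectionalExp} together with Lemma~\ref{lem:derivatives_E_F}, the one-sided derivative equals the ordinary Riemannian directional derivative
\[
\mathrm{D}_{S_{d,K}} E_i(A;H) = \langle \nabla_A E_i(A), H \rangle_F = -\Bigl\langle \frac{1}{\|P^{\smallperp}_A y_i\|} P^{\smallperp}_A y_i y_i^\tT A, H\Bigr\rangle_F.
\]
Summing over $i \notin \mathcal{K}$ and using $H = AX + A_\perp Z$ together with $P^{\smallperp}_A A = 0$ and $P^{\smallperp}_A A_\perp = A_\perp$, the terms involving $X$ drop out and one is left with $-\langle Z, A_\perp^\tT C_{A,\mathcal{K}} A\rangle_F$, which is the first term in \eqref{one-sided}.

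For $i \in \mathcal{K}$, the function $E_i$ is not differentiable at $A$, and this is where the real work lies. Here $A^\tT y_i = $ well... more precisely $\|P^{\smallperp}_A y_i\| = 0$ means $y_i \in \mathcal{R}(A)$, i.e. $y_i = A A^\tT y_i$. Writing $\gamma(\alpha) \coloneqq \exp_A(\alpha H)$, I would expand $P^{\smallperp}_{\gamma(\alpha)} y_i = y_i - \gamma(\alpha)\gamma(\alpha)^\tT y_i$ to first order in $\alpha$. Using $\gamma(0) = A$, $\gamma'(0) = H = AX + A_\perp Z$, one gets
\[
P^{\smallperp}_{\gamma(\alpha)} y_i = -\alpha\bigl( H A^\tT y_i + A H^\tT y_i \bigr) + \mathcal{O}(\alpha^2) = -\alpha\bigl( A X A^\tT y_i + A_\perp Z A^\tT y_i - A X A^\tT y_i \bigr) + \mathcal{O}(\alpha^2),
\]
where I used $y_i = AA^\tT y_i$ so that $A H^\tT y_i = A(X^\tT A^\tT + Z^\tT A_\perp^\tT) y_i = -AXA^\tT y_i$ (skew symmetry of $X$ and $A_\perp^\tT y_i = A_\perp^\tT A A^\tT y_i = 0$). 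Hence $P^{\smallperp}_{\gamma(\alpha)} y_i = -\alpha A_\perp Z A^\tT y_i + \mathcal{O}(\alpha^2)$, and therefore, since $A_\perp$ has orthonormal columns,
\[
E_i(\gamma(\alpha)) = \|P^{\smallperp}_{\gamma(\alpha)} y_i\| = \alpha \|A_\perp Z A^\tT y_i\| + \mathcal{O}(\alpha^2) = \alpha \|Z A^\tT y_i\| + \mathcal{O}(\alpha^2)
\]
for $\alpha \downarrow 0$. Dividing by $\alpha$ and letting $\alpha \downarrow 0$ gives $\mathrm{D}_{S_{d,K}} E_i(A;H) = \|Z A^\tT y_i\|$; summing over $i \in \mathcal{K}$ yields $\sum_{k \in \mathcal{K}} \|Z A^\tT y_k\| = \|Z A^\tT Y_{\mathcal{K}}\|_{2,1}$ by definition of the $\|\cdot\|_{2,1}$ norm. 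Adding the two contributions gives \eqref{one-sided}.

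I expect the main obstacle to be a careful justification of the first-order expansion of $\exp_A(\alpha H)$ and of $P^{\smallperp}_{\exp_A(\alpha H)} y_i$, i.e. controlling the $\mathcal{O}(\alpha^2)$ remainder uniformly so that the limit may be taken term by term; this rests only on $\exp_A$ being smooth with $\mathrm{d}\exp_A(0) = \mathrm{Id}$, as recalled before the lemma, but it must be stated cleanly. A secondary point to handle with care is the non-smoothness of the norm at the origin: because the leading term of $P^{\smallperp}_{\gamma(\alpha)} y_i$ is \emph{exactly} linear in $\alpha$ with a fixed direction, $\|P^{\smallperp}_{\gamma(\alpha)} y_i\| = \alpha\|ZA^\tT y_i\| + \mathcal{O}(\alpha^2)$ holds even when $ZA^\tT y_i = 0$ (both sides are $\mathcal{O}(\alpha^2)$, so the one-sided derivative is $0 = \|ZA^\tT y_i\|$), and one should note this so that the formula is valid for \emph{all} $H \in T_A S_{d,K}$ without case distinction.
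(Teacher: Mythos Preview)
Your proposal is correct and follows essentially the same route as the paper: both split the sum into anchor and non-anchor indices, handle $i\notin\mathcal K$ via the Riemannian gradient from Lemma~\ref{lem:derivatives_E_F}, and for $i\in\mathcal K$ differentiate $B\mapsto (I_d-BB^\tT)y_i$ along $\alpha\mapsto\exp_A(\alpha H)$ to obtain $\|(AH^\tT+HA^\tT)y_i\|$ and then simplify using $H=AX+A_\perp Z$, $X^\tT=-X$, and $A_\perp^\tT y_i=0$. Your explicit Taylor expansion and the paper's chain-rule computation are the same argument in slightly different dress; your added remark on the case $ZA^\tT y_i=0$ is a nice touch the paper leaves implicit.
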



\begin{proof}
	First, we consider $k \in \mathcal{K}$, i.e.~$P^{\smallperp}_A y_k = 0$ and $y_k = A A^\tT y_k$.
	Then, we obtain for $A \in S_{d,K}$ and $H \in T_A S_{d,K}$ that
	\begin{align*}
	\mathrm{D}_{S_{d,K}} E_k (A;H) 
	&= \lim_{\alpha\downarrow 0} \frac{E_k(\exp_A(\alpha H)) - E_k(A)}{\alpha}\\
	&= \biggl\|\lim_{\alpha\downarrow 0} \frac{(I_d-\exp_A(\alpha H)\exp_A(\alpha H)^\tT) y_k}{\alpha}\biggr\|
	\end{align*}
and with $g(B) \coloneqq (I_d - BB^\tT)y_k$ further by the chain rule and \eqref{eq:DirectionalExp},
	\begin{align*}
\mathrm{D}_{S_{d,K}} E_k (A;H) 	
&= 
\left\| \od{}{\alpha}\bigl(g \circ \exp_{A}(\alpha H)\bigr)\big|_{\alpha=0}\right\| 
= 
\left\| \mathrm{d} g \left(\exp_{A}(0 \, H)\right)\bigl[\od{}{\alpha}(\exp_{A}(\alpha H))\big|_{\alpha=0} \bigr] \right\|
\\
&= \left\| \mathrm{d} g (A)[H ] \right\| 
= \bigl\| (A H^\tT + HA^\tT)y_k \bigr\|.
\end{align*}
	Since $H = A X + A_\perp Z$ for some  $X^\tT = -X$ and $y_k \in \mathcal{R}(A)$, this implies further
	\[
		\mathrm{D}_{S_{d,K}} E_k (A;H) = \bigl\|A_\perp Z A^\tT y_k\bigr\| = \bigl\| Z A^\tT y_k\bigr\|.
	\]
	For $k \not \in \mathcal{K}$, the one-sided derivative in direction $H$ is the inner product of $\nabla E_k$ and $H$ so that
	\[
		\mathrm{D}_{S_{d,K}} E(A;H) = -\bigl\langle P^{\smallperp}_A C_{A,\mathcal{K}}  A, H \bigr\rangle + \sum_{k \in \mathcal{K}} \bigl\| Z A^\tT y_k\bigr\|
	\]
	and using the structure of $H$ again, this implies
	\[
		\mathrm{D}_{S_{d,K}} E(A;H) = -\bigl\langle Z, A_\perp^\tT C_{A,\mathcal{K}} A \bigr\rangle +  \bigl\| Z A^\tT Y_{\mathcal{K}}\bigr\|_{2,1}.
	\]
\end{proof}

Under certain conditions, it is possible to formulate a minimality condition also for matrices in the anchor set.

\begin{theorem}\label{cor:condition}
	Let $y_i \in \mathbb R^d$, $i=1,\ldots,N$ and $A \in S_{d,K}$, where $K \le d$.
	Let $\mathcal{K} \coloneqq \{ k \in \{1,\ldots,N\}: \|P^{\smallperp}_A y_k\| = 0 \}$ have cardinality $\kappa \geq 1$.
	Assume that the matrix $Y_{\mathcal{K}} \coloneqq (y_k)_{k \in \mathcal{K}} \in \mathbb R^{d,\kappa}$ 
	has rank $m \leq K$, where the columns are ordered so that the first $m$ are linearly independent 
	and denoted by $Y_m$ and the other ones are their multiples, i.e.,
	$Y_{\mathcal{K}} = (Y_m \, | \, Y_m D)$,
	where $D \in \mathbb R^{m,\kappa - m}$
	is a matrix whose columns contain exactly one nonzero entry.
	Then $A \in S_{d,K}$ is a strict local minimizer of $E$ on  $S_{d,K}$ 
	if and only if the following two conditions are fulfilled
	\begin{align} \label{cond_anchor}
	\bigl\|P^{\smallperp}_A C_{A,\mathcal{K}} (Y_m^\tT Y_m)^{-1} \mathrm{diag}\left( 1_m + |D| \, 1_{\kappa -m} \right)^{-1} \bigr\|_{2,\infty} < 1 \quad \mbox{and} \quad
	P^{\smallperp}_A C_{A,\mathcal{K}} A (A^\tT Y_m)_\perp = 0_{d-K,m},
	\end{align}
	where the absolute value of $D$ has to be understood componentwise, $1_{\kappa -m}$ denotes the vector with $\kappa -m$ entries one,
	and $(A^\tT Y_m)_\perp \in \mathbb R^{K,K-m}$ is any matrix of rank $K-m$ which columns are orthogonal to those of $A^\tT Y_m \in \mathbb R^{K,m}$.
	
	If $Y_{\mathcal{K}}$ contains only vectors which are multiples of $y_1 \in \mathbb R^d$,
	then $A \in S_{d,K}$ is a strict local minimizer of $E$  on $S_{d,K}$ if and only if the following conditions are fulfilled
	\begin{align}
	\bigl\|P^{\smallperp}_A \, C_{A,\mathcal{K}}  \frac{y_1}{\|y_1\|} \bigr\|  <  \|Y_{\mathcal{K}}\|_{2,1} 
	\quad \mbox{and} \quad
	P^{\smallperp}_A  \, C_{A,\mathcal{K}}\, A = P^{\smallperp}_A  \, C_{A,\mathcal{K}}\, \frac{y_1 y_1^\tT}{\|y_1\|^2} \,A. \label{cond2}
	\end{align}
\end{theorem}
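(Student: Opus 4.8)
The plan is to reduce everything, via Theorem~\ref{prop:loc_min} together with the exponential-map characterization of local minimizers on a Riemannian manifold, to a statement about the one-sided directional derivative $\mathrm{D}_{S_{d,K}}E(A;H)$ for $H = AX + A_\perp Z \in T_A S_{d,K}$, whose explicit form is supplied by the preceding lemma. That expression depends only on the off-diagonal block $Z \in \mathbb R^{d-K,K}$ (the $X$-part corresponds to motion inside the orbit $[A] = AO(K)$, along which $E$ is constant), so the question becomes: decide when $\phi(Z) \coloneqq -\langle Z, A_\perp^\tT C_{A,\mathcal K} A\rangle_F + \|Z A^\tT Y_{\mathcal K}\|_{2,1}$ is positive for all relevant $Z\neq 0$. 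Since $E$ is locally Lipschitz (Lemma~\ref{propE}), Theorem~\ref{prop:loc_min}(2) then turns such positivity into strict local minimality, while Theorem~\ref{prop:loc_min}(1) is used (contrapositively) for the converse.

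The heart of the argument is a change of variables adapted to the rank structure of $Y_{\mathcal K}$. First, $P^{\smallperp}_A y_k = 0$ forces $y_k = AA^\tT y_k$ for $k\in\mathcal K$, hence $Y_m = A(A^\tT Y_m)$ and $\mathrm{rank}(A^\tT Y_m) = m$. Writing $Y_{\mathcal K} = (Y_m\,|\,Y_m D)$ and using that each column of $D$ has a single nonzero entry, one rewrites the seminorm term as $\|Z A^\tT Y_{\mathcal K}\|_{2,1} = \|Z A^\tT Y_m\,\mathrm{diag}(1_m + |D|\,1_{\kappa-m})\|_{2,1}$. Decomposing $\mathbb R^K = \mathcal R(A^\tT Y_m)\oplus\mathcal R(A^\tT Y_m)^\perp$ splits $Z$ row-wise into a part $Z_1$ with rows in $\mathcal R(A^\tT Y_m)$ and a part $Z_0$ with rows orthogonal to it; on the $Z_1$-block the value of $\phi$ is unaffected by $Z_0$, and on the $Z_0$-block $\phi$ reduces to the linear term $-\langle Z_0, A_\perp^\tT C_{A,\mathcal K} A\rangle$. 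Setting $W \coloneqq Z_1 A^\tT Y_m\,\mathrm{diag}(1_m + |D|\,1_{\kappa-m})$, which by the rank-$m$ property runs over all of $\mathbb R^{d-K,m}$, and using $A_\perp A_\perp^\tT = P^{\smallperp}_A$ and $A(A^\tT Y_m) = Y_m$, one obtains $\phi(Z_1) = \|W\|_{2,1} - \langle W, G\rangle$ with $G = P^{\smallperp}_A C_{A,\mathcal K} Y_m (Y_m^\tT Y_m)^{-1}\mathrm{diag}(1_m+|D|\,1_{\kappa-m})^{-1}$. By the dual-norm identity \eqref{norm_dual}, $\phi(Z_1)>0$ for all $Z_1\neq 0$ is equivalent to $\|G\|_{2,\infty}<1$, i.e. the first condition in \eqref{cond_anchor}; and the requirement that the linear term vanish on the whole $Z_0$-block is exactly $P^{\smallperp}_A C_{A,\mathcal K} A (A^\tT Y_m)_\perp = 0$, the second condition in \eqref{cond_anchor}. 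Assembling these, and checking mixed directions $Z=Z_0+Z_1$ (the seminorm term does not couple the two blocks), yields the stated equivalence; for the "only if" one produces, from a violated condition, a tangent direction in which $E$ fails to strictly increase. The second assertion is the case $m=1$: then $Y_m = y_1$, $Y_m^\tT Y_m = \|y_1\|^2$ and $\mathrm{diag}(1_m + |D|\,1_{\kappa-m}) = \|Y_{\mathcal K}\|_{2,1}/\|y_1\|$, and substituting plus simplifying with $AA^\tT y_1 = y_1$ reproduces \eqref{cond2}.

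The main obstacle is the treatment of the degenerate directions $Z_0$ (rows orthogonal to $\mathcal R(A^\tT Y_m)$), which are present precisely when $m<K$. Along the corresponding Grassmannian geodesics one checks, from the exact geodesic formula, that all anchor summands $E_k$, $k\in\mathcal K$, vanish identically, so $\phi(Z_0)$ is linear and condition~2 can at best force it to zero rather than to a positive value. Certifying that $E$ nevertheless strictly increases along these directions requires a genuinely second-order analysis of the smooth remainder $\sum_{i\notin\mathcal K}E_i$ restricted to these geodesics — it is vanishing of the first derivative there that condition~2 encodes — and this is where the bulk of the delicate work (and possibly an extra hypothesis tying $\mathcal R(A)$ and $\mathcal R(A_\perp)$ to $\mathrm{span}(Y)$) must be concentrated. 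A secondary point to make explicit at the outset is that "strict local minimizer of $E$ on $S_{d,K}$" must be read modulo the orbit $[A]=AO(K)$, since $E$ is constant along it for $K\ge 2$.
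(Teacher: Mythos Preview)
Your approach mirrors the paper's almost exactly: the same opening via Theorem~\ref{prop:loc_min} and the one-sided derivative formula~\eqref{one-sided}, the same change of variables adapted to $\mathcal R(A^\tT Y_m)$ and its orthogonal complement (the paper writes it as right-multiplication by the block matrix with rows $Y_m^\tT A$ and $(Y_m^\tT A)_\perp$, which is your $Z = Z_1 + Z_0$ splitting in different notation), the same rewriting of the seminorm term via the diagonal weight $\mathrm{diag}(1_m + |D|\,1_{\kappa-m})$, the same appeal to the dual-norm identity~\eqref{norm_dual}, and the same specialization to $m=1$ for~\eqref{cond2}.

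The obstacle you flag in your final paragraph is real, and the paper does not address it. The paper's proof opens by asserting that ``$A$ is a strict local minimizer of $E$ on $S_{d,K}$ if and only if $\langle Z, A_\perp^\tT C_{A,\mathcal K} A\rangle < \|Z A^\tT Y_{\mathcal K}\|_{2,1}$ for all $Z$,'' citing Theorem~\ref{prop:loc_min}; but that theorem only supplies a non-strict necessary condition and a strict sufficient one, so this ``iff'' with strict inequality is not justified as written. When $m<K$ the paper then reduces to exactly the pair of conditions you derive, but along the $Z_0$-directions (those with $Z A^\tT Y_m = 0$) the second condition in~\eqref{cond_anchor} can only force the directional derivative to vanish, not to be positive, and the paper never supplies the second-order argument you correctly identify as missing. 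Your remark about the $O(K)$-orbit is likewise valid: $E$ is constant on $[A]$, so for $K\ge 2$ ``strict local minimizer on $S_{d,K}$'' can only be meant modulo that orbit, and the paper leaves this implicit as well. In short, you have reproduced the paper's argument and, in addition, located a genuine gap in it; the case $m=K$ (where there are no $Z_0$-directions and the second condition is vacuous) is the only one in which both your outline and the paper's proof go through without this extra analysis.
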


\begin{proof}
	By Theorem \ref{prop:loc_min} and \eqref{one-sided}, $A$ is a strict local minimizer of $E$ on  $S_{d,K}$
	if and only if
	$$
	\langle Z, A_\perp^\tT C_{A,\mathcal{K}} A \rangle <  \sum_{k =1}^\kappa \|Z A^\tT y_k\| = \| Z A^\tT Y_{\mathcal{K}}\|_{2,1}
	$$ 
	for all $Z \in \mathbb R^{d-K,K}$. Replacing $Z$ by  
	$Z 
	\begin{pmatrix} 
	Y_m^\tT A\\
	(Y_m^\tT A)_\perp
	\end{pmatrix}
	$,
	where $(Y_m^\tT A)_\perp = (A^\tT Y_m)_\perp^\tT$,
	this is equivalent to
	\[
	\left\langle Z 
	\begin{pmatrix} 
	Y_m^\tT A\\
	(Y_m^\tT A)_\perp
	\end{pmatrix},
	A_\perp^\tT C_{A,\mathcal{K}} A \right\rangle
	= 
	\Bigl\langle Z,
	A_\perp^\tT C_{A,\mathcal{K}} A \bigl(A^\tT Y_m \, | \, (A^\tT Y_m)_\perp\bigr)\Bigr\rangle
	< 
	\Biggl\| Z \begin{pmatrix} Y_m^\tT A A^\tT Y_{\mathcal K}\\ 0_{r-m,\kappa} \end{pmatrix}\Biggr\|_{2,1} 
	\]
	for all $Z \in \mathbb R^{d-K,K}$. 
	Clearly, the condition is fulfilled if and only if
	\[
	\left\langle Z_m, A_\perp^\tT C_{A,\mathcal{K}} A A^\tT Y_m\right\rangle < \bigl\|Z_m Y_m^\tT A A^\tT Y_{\mathcal{K}} \bigr\|_{2,1}
	\quad \mbox{and} \quad 
	A_\perp^\tT C_{A,\mathcal{K}} A (A^\tT Y_m)_\perp = 0_{d-K,m}  
	\]
	for all 
	$Z_m \in \mathbb R^{d-K,m}$. 
	The second part implies that the columns of $C_{A,\mathcal{K}} A (A^\tT Y_m)_\perp$ 
	are in the range of $A$ which gives the second condition in \eqref{cond_anchor}.
	
	Now, $\|P^{\smallperp}_A y_k\| = 0$ implies $A A^\tT y_k = y_k$, $k \in \mathcal{K}$ so that the first condition becomes
	\[
	\left\langle Z_m, A_\perp^\tT C_{A,\mathcal{K}}  Y_m\right\rangle < \bigl\|Z_m Y_m^\tT Y_{\mathcal{K}} \bigr\|_{2,1} .
	\]
	Using $Y_{\mathcal{K}} = (Y_m \, | \, Y_m D)$ and the definition of $\| \cdot \|_{2,1}$,
	the right-hand side can be rewritten as
	\[
	\bigl\|Z_m Y_m^\tT Y_{\mathcal{K}} \bigr\|_{2,1}
	= 
	\bigl\|Z_m \, (Y_m^\tT Y_m) (I_m \, | \, \mathrm{diag}(|D| \, 1_{\kappa -m})  \bigr\|_{2,1}
	\]
	so that the condition can be rewritten as
	\[
	\left\langle Z_m, A_\perp^\tT C_{A,\mathcal{K}}  Y_m\right\rangle \le \bigl\|Z_m  (Y_m^\tT Y_m) (I_m + \mathrm{diag}(|D| \, 1_{\kappa -m})) (Y_m^\tT Y_m) \bigr\|_{2,1}
	\]
	for all $Z_m \in \mathbb R^{d-K,m}$. 
	By \eqref{norm_dual} this is fulfilled if and only if
	\[
	\bigl\| A_\perp^\tT C_{A,\mathcal{K}} Y_m (Y_m^\tT Y_m)^{-1} \mathrm{diag}\left( 1_m +|D| \, 1_{\kappa -m} \right)^{-1} \bigr\|_{2,\infty} < 1.
	\]
	Using $\|P^{\smallperp}_A y\| = \|A_\perp^\tT y\|$ for all $y \in \mathbb R^d$, this gives the assertion \eqref{cond_anchor}.
	
	Assume that the columns of $Y_{\mathcal{K}}$ are multiples of $y_1$.
	Then the first condition of the simplification \eqref{cond2} follows immediately from
	\[(Y_m^\tT Y_m)  \mathrm{diag}\left( 1_m +|D| \, 1_{\kappa -m} \right) =   \sum_{k \in \mathcal{K} } \|y_k\| = \|Y_{\mathcal{K}}\|_{2,1}.\]
	Since for every $x \in \mathbb R^K$, $x \not = 0$ the columns of $I_K - x x^\tT / \|x\|^2$ span the linear space orthogonal to $x$, 
	the second condition  can be deduced using
	$
	P^{\smallperp}_A  \, C_{A,\mathcal{K}}\, A \left(I_K - A^\tT y_1 y_1^\tT A /\|y_1\|^2 \right)=  0_{d,K}
	$
	and $A A^\tT y_1 = y_1$.
\end{proof}

\begin{remark}
	For more general cases than those considered in Theorem \ref{cor:condition} there
	is no simple optimality condition for an anchor point $A$ to be a local minimizer, 
	since basically all possible descent direction $H$ need to be checked in \eqref{one-sided}.
	However, if
	\[
		\bigl\Vert P^{\smallperp}_A C_{A,\mathcal K} A \bigr\Vert^2
		> \sum_{k \in \mathcal K} \bigl\Vert P^{\smallperp}_A C_{A,\mathcal K} y_k\bigr\Vert,
	\]
	then $ H \coloneqq P^{\smallperp}_A C_{A,\mathcal K} A$ is a descent direction since
	\begin{align*}
		\mathrm{D}_{S_{d,K}} E(A;H) &=-\bigl\langle P^{\smallperp}_A C_{A,\mathcal{K}} A, H \bigr\rangle + \sum_{k \in \mathcal{K}} \bigl\| (A H^\tT + HA^\tT)y_k \bigr\|\\
		&= - \bigl\Vert P^{\smallperp}_A C_{A,\mathcal{K}}  A\bigr\Vert^2 + \sum_{k \in \mathcal{K}} \bigl\| P^{\smallperp}_A C_{A,\mathcal K} y_k\bigr\|< 0.
	\end{align*}
	Hence, we can use a line search method to find a next iterate in these anchor points. 
	Note that in comparison to the gradient condition used in \cite[Theorem 4]{MZL19}, ours can be easily verified numerically.
\end{remark}

\subsection*{Acknowledgments}
	Part of this research was performed while the author was visiting the Institute for Pure and Applied Mathematics (IPAM), which is supported by the National Science Foundation.
	Funding by the German Research Foundation (DFG)  with\-in the Research Training Group 1932,
	project area P3, is gratefully acknowledged. 
	We thank the anonymous reviewer for pointing to the retraction approach in Section 6.

\bibliographystyle{abbrv}
\bibliography{refs_robpca}	

\begin{thebibliography}{10}

\bibitem{AMS08}
P.-A. Absil, R.~Mahony, and R.~Sepulchre.
\newblock {\em Optimization Algorithms on Matrix Manifolds}.
\newblock Princeton and Oxford, Princeton University Press, 2008.

\bibitem{ABRS2010}
H.~Attouch, J.~Bolte, P.~Redont, and A.~Soubeyran.
\newblock Proximal alternating minimization and projection methods for
  nonconvex problems: An approach based on the {K}urdyka-{{\L{}}}ojasiewicz
  inequality.
\newblock {\em Mathematics of Operations Research}, 35(2):438--457, 2010.

\bibitem{ABSF13}
H.~Attouch, J.~Bolte, and B.~F. Svaiter.
\newblock Convergence of descent methods for semi-algebraic and tame problems:
  proximal algorithms, forward-backward splitting, and regularized
  {G}auss-{S}eidel methods.
\newblock {\em Mathematical Programming}, 137(1-2, Ser. A):91--129, 2013.

\bibitem{BS2015}
A.~Beck and S.~Sabach.
\newblock Weiszfeld's method: old and new results.
\newblock {\em Journal of Optimization Theory and Applications}, 164(1):1--40,
  2015.

\bibitem{Ben-Tal1985}
A.~Ben-Tal and J.~Zowe.
\newblock Directional derivatives in nonsmooth optimization.
\newblock {\em Journal of Optimization Theory and Applications},
  47(4):483--490, 1985.

\bibitem{candes2011robust}
E.~J. Candes, X.~Li, Y.~Ma, and J.~Wright.
\newblock Robust principal component analysis?
\newblock {\em Journal of the ACM}, 58(3):11, 2011.

\bibitem{CIS2011}
E.~Chouzenoux, J.~Idier, and S.~Moussaoui.
\newblock A majorize - minimize strategy for subspace optimization applied to
  image restoration.
\newblock {\em IEEE Transactions on Image Processing}, 20:1517--1528, 2011.

\bibitem{ding2006r}
C.~Ding, D.~Zhou, X.~He, and H.~Zha.
\newblock $\uppercase{R}_1$-{PCA}: rotational invariant $\uppercase{L}_1$-norm
  principal component analysis for robust subspace factorization.
\newblock In {\em Proceedings of the 23rd international conference on Machine
  learning}, pages 281--288. ACM, 2006.

\bibitem{EAS98}
A.~Edelman, T.~A. Arias, and S.~T. Smith.
\newblock The geometry of algorithms with orthogonality constraints.
\newblock {\em SIAM Journal on Matrix Analysis and Applications},
  20(2):303--353, 1998.

\bibitem{fischler1987random}
M.~A. Fischler and R.~C. Bolles.
\newblock Random sample consensus: a paradigm for model fitting with
  applications to image analysis and automated cartography.
\newblock In {\em Readings in Computer Vision}, pages 726--740. Elsevier, 1987.

\bibitem{FW1956}
M.~Frank and P.~Wolfe.
\newblock An algorithm for quadratic programming.
\newblock {\em Naval Research Logistics Quarterly}, 3:95--110, 1956.

\bibitem{HFB2014}
S.~Hauberg, A.~Feragen, and M.~J. Black.
\newblock Grassmann averages for scalable robust {PCA}.
\newblock In {\em Proceedings of the IEEE Conference on Computer Vision and
  Pattern Recognition}, pages 3810--3817, 2014.

\bibitem{Huber1981}
P.~J. Huber.
\newblock {\em Robust Statistics}.
\newblock Wiley, New Jersey, 1nd edition, 1981.

\bibitem{HR2009}
P.~J. Huber and E.~M. Ronchetti.
\newblock {\em Robust Statistics}.
\newblock Wiley, 2nd edition, 2009.

\bibitem{JNRS10}
M.~Journ{\'e}e, Y.~Nesterov, P.~Richt\'{a}rik, and R.~Sepulchre.
\newblock Generalized power method for sparse principal component analysis.
\newblock {\em Journal of Machine Learning Research}, 11:517--553, 2010.

\bibitem{ke2005robust}
Q.~Ke and T.~Kanade.
\newblock Robust $\ell_1$ norm factorization in the presence of outliers and
  missing data by alternative convex programming.
\newblock In {\em Computer Vision and Pattern Recognition, 2005. CVPR 2005.
  IEEE Computer Society Conference on}, volume~1, pages 739--746. IEEE, 2005.

\bibitem{KK2016}
S.~L. Keeling and K.~Kunisch.
\newblock Robust $\ell_1$ approaches to computing the geometric median and
  principal and independent components.
\newblock {\em Journal of Mathematical Imaging and Vision}, 56(1):99--124,
  2016.

\bibitem{KKSZ08}
H.~P. Kriegel, P.~Kr\"oger, E.~Schubert, and A.~Zimek.
\newblock A general framework for increasing the robustness of {PCA}-based
  correlation clustering algorithms.
\newblock In {\em Scientific and Statistical Database Management. Lecture Notes
  in Computer Science. 5069}, pages 418--435, 2008.

\bibitem{Kur1998}
K.~Kurdyka.
\newblock On gradients of functions definable in o-minimal structures.
\newblock {\em Annales de l'Institut Fourier}, pages 769--783, 1998.

\bibitem{kwak2008principal}
N.~Kwak.
\newblock Principal component analysis based on l1-norm maximization.
\newblock {\em IEEE Transactions on Pattern Analysis and Machine Intelligence},
  30(9):1672--1680, 2008.

\bibitem{LM2017}
G.~Lerman and T.~Maunu.
\newblock Fast, robust and non-convex subspace recovery.
\newblock {\em Information and Inference: A Journal of the {IMA}},
  7(2):277--336, 2017.

\bibitem{LM2018}
G.~Lerman and T.~Maunu.
\newblock An overview of robust subspace recovery.
\newblock {\em Proceedings of the {IEEE}}, 106(8):1380--1410, 2018.

\bibitem{LCTZ15}
G.~Lerman, M.~McCoy, J.~A. Tropp, and T.~Zhang.
\newblock Robust computation of linear models by convex relaxation.
\newblock {\em Foundations of Computational Mathematics}, 15(1):363--410, 2015.

\bibitem{RL1987}
A.~M. Leroy and P.~J. Rousseeuw.
\newblock Robust regression and outlier detection.
\newblock {\em Wiley Series in Probability and Mathematical Statistics}, 1987.

\bibitem{LS1985}
G.~Li and Z.~Chen.
\newblock Projected-pursuit approach to robust dispersion matrices and
  principal components:primary theory and {M}onte-{C}arlo.
\newblock {\em Journal of the American Statistical Society}, 80:759--766, 1985.

\bibitem{Lo1963}
S.~\L{}ojasiewicz.
\newblock Une propri\'{e}t\'{e} topologique des sous-ensembles analytiques
  r\'{e}els.
\newblock In {\em Les \'{E}quations aux {D}\'{e}riv\'{e}es {P}artielles
  ({P}aris, 1962)}, pages 87--89. \'{E}ditions du Centre National de la
  Recherche Scientifique, Paris, 1963.

\bibitem{LT13}
R.~Luss and M.~Teboulle.
\newblock Conditional gradient algorithms for rank-one matrix approximations
  with a sparsity constraint.
\newblock {\em SIAM Review}, 55(1):65--98, 2013.

\bibitem{Mang1996}
O.~L. Mangasarian.
\newblock Machine learning via polyhedral concave minimization.
\newblock In H.~Fischer, B.~Riedmueller, and S.~Schaeffer, editors, {\em
  Applied Mathematics and Parallel Computing - Festschrift for Klaus Ritter},
  pages 175--188. Physica-Verlag: A Springer-Verlag Company, 1996.

\bibitem{MMY2006}
R.~A. Maronna, R.~D. Martin, and V.~J. Yohai.
\newblock {\em Robust Statistics}.
\newblock Wiley Series in Probability and Statistics, Chichester, 2006.

\bibitem{MZL19}
T.~Maunu, T.~Zhang, and G.~Lerman.
\newblock A well-tempered landscape for non-convex robust subspace recovery.
\newblock {\em Journal of Machine Learning Research}, 20(37):1--59, 2019.

\bibitem{MT2011}
M.~McCoy and J.~A. Tropp.
\newblock Two proposals for robust {PCA} using semidefinite programming.
\newblock {\em Electronic Journal of Statistics}, 5:1123--1160, 2011.

\bibitem{mord2006}
B.~Mordukhovich, N.~M. Nam, and N.~D. Yen.
\newblock Fr{\'e}chet subdifferential calculus and optimality conditions in
  nondifferentiable programming.
\newblock {\em Optimization}, 55(5-6):685--708, 2006.

\bibitem{NNSS19b}
S.~Neumayer, M.~Nimmer, S.~Setzer, and G.~Steidl.
\newblock On the robust {PCA} and {W}eiszfeld's algorithm.
\newblock {\em Applied Mathematics \& Optimization}, to appear 2019.

\bibitem{NNS2017}
S.~Neumayer, M.~Nimmer, and G.~Steidl.
\newblock On a projected {W}eiszfeld algorithm.
\newblock In F.~Lauze, Y.~Dong, and A.~B. Dahl, editors, {\em Scale Space and
  Variational Methods in Computer Vision}, volume 10302 of {\em Lecture Notes
  in Computer Science}, pages 486--497. Springer International Publishing,
  2017.

\bibitem{nie2011robust}
F.~Nie, H.~Huang, C.~Ding, D.~Luo, and H.~Wang.
\newblock Robust principal component analysis with non-greedy $\ell_1$-norm
  maximization.
\newblock In {\em IJCAI Proceedings-International Joint Conference on
  Artificial Intelligence}, volume~22, page 1433, 2011.

\bibitem{ostrowski1973solutions}
A.~M. Ostrowski.
\newblock {\em Solutions of Equations in Euclidean and Banach Spaces}.
\newblock Academic Press, 1973.

\bibitem{Pearson1901}
K.~Pearson.
\newblock On lines and planes of closest fit to systems of points in space.
\newblock {\em Philosophical Magazine}, 2(11):559--572, 1901.

\bibitem{podosinnikova2014robust}
A.~Podosinnikova, S.~Setzer, and M.~Hein.
\newblock Robust {PCA}: Optimization of the robust reconstruction error over
  the {S}tiefel manifold.
\newblock In {\em German Conference on Pattern Recognition}, pages 121--131.
  Springer, 2014.

\bibitem{Ro1970}
R.~T. Rockafellar.
\newblock {\em Convex Analysis}.
\newblock Princeton University Press, 1970.

\bibitem{rousseeuw2005robust}
P.~J. Rousseeuw and A.~M. Leroy.
\newblock {\em Robust regression and outlier detection}, volume 589.
\newblock John Wiley \& Sons, 2005.

\bibitem{Tyler1987}
D.~E. Tyler.
\newblock A distribution-free ${M}$-estimator of multivariate scatter.
\newblock {\em The Annals of Statistics}, 15(1):234--251, Mar. 1987.

\bibitem{We37}
E.~Weiszfeld.
\newblock Sur le point pour lequel la somme des distances de $n$ points
  donn\'es est minimum.
\newblock {\em T\^ohoku Mathematical Journal}, 43:355--386, 1937.

\bibitem{XCS2012}
H.~Xu, C.~Caramanis, and S.~Sanghavi.
\newblock Robust robust {PCA} via outlier pursuit.
\newblock {\em IEEE Transactions on Information Theory}, 58(3):3047--3064,
  2012.

\bibitem{Zhang2015}
T.~Zhang.
\newblock Robust subspace recovery by {T}yler's {M}-estimator.
\newblock {\em Information and Inference}, 5(1):1--21, 2015.

\end{thebibliography}

\end{document}